\theoremstyle{plain}
  \newtheorem{theorem}{Theorem}[section]
  \newtheorem{prop}[theorem]{Proposition}
  \newtheorem{proposition}[theorem]{Proposition}
  \newtheorem{lemma}[theorem]{Lemma}
  \newtheorem{corollary}[theorem]{Corollary}
  \newtheorem{claim}[theorem]{Claim}
\theoremstyle{definition}
  \newtheorem{definition}[theorem]{Definition}
  \newtheorem{remark}[theorem]{Remark}
\numberwithin{equation}{section}
\numberwithin{theorem}{section}
\renewcommand\Re{{\operatorname{Re}}}
\renewcommand\Im{{\operatorname{Im}}}
\newcommand\R{{\mathbb{R}}}
\newcommand\C{{\mathbb{C}}}
\renewcommand\P{{\mathbf{P}}}
\newcommand\E{{\mathbf{E}}}
\newcommand\bw{{\mathbf{w}}}
\newcommand\Z{{\mathbb{Z}}}
\newcommand\al{\alpha}
\newcommand\la{\lambda}
\newcommand\1{\mathbf{1}}
\newcommand\Ba{{\mathbf a}}
\newcommand\Bb{{\mathbf b}}
\newcommand\Bw{{\mathbf w}}
\newcommand\Bx{{\mathbf x}}
\newcommand\BN{{\mathbf N}}
\renewcommand\Pr{{\mathbf P }}
\newcommand\CA{{\mathcal A}}
\newcommand\cD{{\mathcal D}}
\newcommand\CE{{\mathcal E}}
\newcommand\CG{{\mathcal G}}
\newcommand\CM{{\mathcal M}}
\newcommand\CN{{\mathcal N}}
\newcommand\CU{{\mathcal U}}
\newcommand\CX{{\mathcal X}}
\newcommand\eps{\varepsilon}
\newcommand\wh{\widehat}
\renewcommand\a{\alpha}
\newcommand\bs{\backslash}
\newcommand{\wb}{\overline}
\newcommand\Bg{{\CN_{\R}(0,1)}}
\renewcommand\1{\mathbbm 1}
\newcommand\til{\widetilde}
\newcommand{\etalchar}[1]{$^{#1}$}
\newcommand\nc\newcommand
\DeclareMathOperator
\renewcommand\bs\boldsymbol
\definecolor{purple}{rgb}{0.9,0,0.8}
\nc{\nick}[1]{{\color{purple} #1}}
\nc{\red}[1]{{\color{red} #1}}
\nc\lf{\lfloor}
\nc{\rf}{\rfloor}
\nc\pr{{\mathbf{P}}}
\nc\lam{{\lambda}}
\nc\tP{{\til P}}
\nc{\xxi}{\xi}
\nc{\ii}{{\sqrt{-1}}}
\dmo{\Leb}{{Leb}}
\nc{\wt}{\widetilde}
\nc{\mLeb}{m_{\Leb}}
\nc{\tran}{{\mathsf{T}}}
\nc{\mom}{{m}}
\nc{\avec}{{\bs a}}
\nc{\bvec}{{\bs b}}
\nc{\uvec}{{\bs u}}
\nc{\vvec}{{\bs v}}
\nc{\wvec}{{\bs w}}
\nc{\ul}{\underline}
\nc{\us}{{\bs{s}}}
\nc{\bbs}{{\bs{s}}}
\nc{\pol}{{P}}
\nc{\pt}{Q}
\nc{\pk}{P}
\nc{\ptt}{{\wt{P}}}
\nc{\pkt}{{\wt{Q}}}
\nc{\tor}{{\R/\Z}}
\nc{\mk}{{m}}
\nc{\mt}{{\wt m}}
\dmo\bad{{bad}}
\nc{\badarcs}{{E_{\bad}}}
\nc{\Cp}{{K_0}}
\nc{\Smooth}{{K}}
\nc{\smooth}{{\kappa}}
\nc{\Spread}{{L}}
\nc{\spread}{{\lambda}}
\nc{\dbd}{{K}}
\nc{\tpol}{{\til\pol}}
\nc{\ali}{{\al_i}}
\nc{\alo}{{\al_1}}
\nc{\alm}{{\al_\mom}}
\nc{\dil}{{L}}
\nc{\mat}{{A}}
\dmo{\row}{{row}}
\nc{\bzeta}{{\bs \zeta}}
\nc{\bxi}{{\bs \xi}}
\nc\height{{\tau}}
\nc\subG{{K_{G}}}
\nc\Points{{\CM}}
\nc\Char{{\Phi}}
\nc\charr{{\phi}}
\nc{\biggish}{{\nick{\log^2n}}}
\nc{\bigger}{{\nick{\log^3n}}}
\nc{\NIQ}[1]{{\color{red} \sf $\spadesuit\spadesuit\spadesuit$ Nick: [#1]}}
\nc{\NIC}[1]{{\color{blue} \sf $\spadesuit\spadesuit\spadesuit$ Nick: [#1]}}
\nc{\revised}[1]{{\color{black} #1}}
\begin{document}

\begin{frontmatter}[classification=text]

\title{Universality of the minimum modulus for\\ random trigonometric polynomials} 

\author[nc]{Nicholas A.\ Cook}
\author[hn]{Hoi H. \ Nguyen \thanks{The second author is supported by NSF CAREER grant DMS-1752345.}}

\begin{abstract}
It has been shown in \cite{YaZe} that the minimum modulus of random trigonometric polynomials with Gaussian coefficients has a limiting exponential distribution. We show this is a universal phenomenon. Our approach relates the joint distribution of small values of the polynomial at a fixed number $m$ of points on the circle to the distribution of a certain random walk in a $4m$-dimensional phase space. Under Diophantine approximation conditions on the angles, we obtain strong small ball estimates and a local central limit theorem for the distribution of the walk.
\end{abstract}
\end{frontmatter}





\section{Introduction}

Consider 
the Kac polynomial 
\begin{equation}	\label{Kac}
F_n(z) = \sum_{j=0}^n \xi_jz^j
\end{equation}
for a sequence of iid random variables $\xi_j$ (real or complex). 
The study of the distribution of zeros of $F_n$, and in particular on the number of real zeros, has a long history: the case that $\xi_j\in\{-1,0,1\}$ was considered by Bloch and Polya \cite{BlPo} and Littlewood and Offord \cite{LiOf38,LiOf43} in the 1930s, and the Gaussian case by Kac in the 1940s \cite{Kac1,Kac2}. We refer to \cite{TV} for an overview of the vast literature inspired by those early works.


To the best of our knowledge, the question of the size of the minimum modulus over the unit circle for Kac polynomials
was first raised by Littlewood \cite{Lo}, who considered the case of Rademacher signs $\xi_j = \pm1$.\footnote{We also refer the readers to \cite{BM} for a recent striking result answering another question of Littlewood.}
In particular, Littlewood asked whether $\min_{|z|=1}|F_n(z)|=o(1).$\footnote{Here and throughout the article asymptotic notation is with respect to the limit $n\to \infty$; see \Cref{sec:notation} for our notational conventions.}
This question was answered in the affirmative by Kashin \cite{Ka}; a significant improvement was later obtained by Konyagin \cite{K}, who showed
\begin{equation}	\label{Konyagin}
\P\Big(\, \min_{|z|=1}|F_n(z)| \ge n^{-1/2+\eps} \,\Big) \to 0
\end{equation}
as $n\to \infty$, for any $\eps>0$. 
Subsequently, Konyagin and Schlag \cite{KSch} showed that for any $\eps>0$,
\revised{
\begin{equation}	\label{KS:UB}
\limsup_{n\to\infty}
\P\Big(\, \min_{|z|=1}|F_n(z)| \le \eps n^{-1/2}\,\Big) \le C\eps
\end{equation}
for 
a universal constant $C<\infty$.} 
From the above two estimates, it is thus natural to ask whether $n^{1/2}m(F_n)$ converges in law, and to identify the limiting distribution. 

This question was recently addressed for the case of Gaussian coefficients by a beautiful result of Yakir and Zeitouni \cite{YaZe}, which we now recall.
As we consider the restriction of $F_n$ over the unit circle we parametrize $z=e(x)$, where here and throughout we abbreviate $e(t):= \exp( \ii t)$.
The work \cite{YaZe} considers the normalized trigonometric series
\begin{equation}	\label{model:YZ}
P_n(x) = \frac{1}{\sqrt{2n+1}}\sum_{j=-n}^n \xi_j e(jx), \qquad x\in \R,
\end{equation}
where $\xi_j$ are iid copies of a real or complex, centered random variable $\xi$ of unit variance.
Note that $P_n$ has been scaled to have 
\revised{unit variance at each fixed $x$. Up to a factor of unit modulus, which does not affect our results, $P_n$ is the restriction of the Kac polynomial $(2n+1)^{-1/2}F_{2n}(z)$ to the unit circle (all of our arguments extend to the case of odd degree).}
We denote
\begin{equation}
\mk_n:= \min_{x\in [-\pi,\pi]} 
 | \pk_n(x)|.
\end{equation}
With our normalization and from \eqref{Konyagin} and \eqref{KS:UB} we expect that $\mk_n$ is typically of order $n^{-1}$. 
For the case of Gaussian coefficients, in \cite{YaZe} the limiting distribution of $n\cdot m_n$ was shown to be exponential:

\begin{theorem}[{\cite{YaZe}}]	\label{thm:YZ}
Assume that $\xi$ is a standard real or complex Gaussian.
Then for any $\tau>0$, 
\begin{equation}	\label{YZ:lim}
\lim_{n\to \infty} \pr\Big(\, \mk_n>\frac{\tau}{n}\,\Big)= 
e^{-\lam \tau}
\end{equation}
where $\lam= 2\sqrt{\pi/3}$. 
\end{theorem}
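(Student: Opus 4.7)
The plan is to count local minima $x_\star$ of $|\pk_n|$ in $[-\pi,\pi]$ with $|\pk_n(x_\star)| \le \tau/n$ via a Kac--Rice formula, and show that the count $N_\tau$ converges in distribution to $\mathrm{Poisson}(\lambda\tau)$. Since $\{\mk_n > \tau/n\} = \{N_\tau = 0\}$ up to a null event, this yields \eqref{YZ:lim}. The starting point is the scaling limit: for any fixed $x_0 \in (-\pi,\pi)\setminus\{0,\pi\}$, the rescaled Gaussian process $\zeta_n(t) := \pk_n(x_0 + t/n)$ converges on compact sets to a stationary circular complex Gaussian process $\zeta$ on $\R$ with covariance $\E[\zeta(t)\overline{\zeta(s)}] = \sin(t-s)/(t-s)$. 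This is immediate from the Dirichlet-kernel identity $\E[\pk_n(x)\overline{\pk_n(y)}] = D_n(x-y)/(2n+1)$ (and, in the real case, the asymptotic vanishing $D_n(2x_0)/(2n+1) \to 0$ of the pseudo-covariance for generic $x_0$).

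For the first moment, writing $F(x) = |\pk_n(x)|^2$, the Kac--Rice formula gives
\begin{equation*}
\E N_\tau = \int_{-\pi}^{\pi} \E\bigl[|F''(x)|\,\1(F(x) \le \tau^2/n^2,\, F''(x) > 0) \bigm| F'(x) = 0\bigr]\, p_{F'(x)}(0)\, dx.
\end{equation*}
In the rescaled variable $t = nx$, the integrand is determined by the joint Gaussian law of $(\zeta, \zeta', \zeta'')$ at a point, which is explicit: writing $\zeta = X + \ii Y$ and reading covariances from $\sin s/s = 1 - s^2/6 + s^4/120 - \cdots$, one has $\Var(X) = \Var(Y) = 1/2$ and $\Var(X') = \Var(Y') = 1/6$. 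The value $F(0)$ is exponential with mean $1$; decomposing $\zeta' = (\alpha + \ii\beta) e^{\ii \arg \zeta}$ into radial and tangential components ($\alpha, \beta$ iid $N(0,1/6)$), the condition $F'(0) = 0$ is $\alpha = 0$ with conditional density $\tfrac{1}{2}\sqrt{3/(\pi f)}$ at $F(0) = f$, and $F''(0) \approx 2\beta^2$ for small $f$. A direct computation then yields
\begin{equation*}
\E N_\tau \sim 2\pi n \cdot \tfrac{1}{3} \cdot \tfrac{1}{2}\sqrt{3/\pi}\int_0^{\tau^2/n^2} f^{-1/2}\,df = \frac{2\pi\tau}{3}\sqrt{\frac{3}{\pi}} = 2\tau\sqrt{\pi/3} = \lambda\tau,
\end{equation*}
matching the claimed constant.

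To upgrade to convergence in distribution, I would compute the factorial moments $\E\binom{N_\tau}{k}$ via the $k$-point Kac--Rice formula. For $k$-tuples at pairwise distances $\gg 1/n$, asymptotic independence of $\pk_n$ at well-separated points (via $D_n(x-y)/(2n+1) \to 0$) gives a product structure and $\E\binom{N_\tau}{k} \to (\lambda\tau)^k/k!$. The method of moments then yields $N_\tau \Rightarrow \mathrm{Poisson}(\lambda\tau)$, whence $\P(\mk_n > \tau/n) = \P(N_\tau = 0) \to e^{-\lambda\tau}$. The main obstacle is controlling clustered configurations at rescaled separation $O(1)$: the joint covariance matrix of $(\zeta(t_i), \zeta'(t_i))_{i \le k}$ is near-singular in this regime, and one must show via careful Taylor expansion of the sinc kernel that the probability of several deep minima of $|\zeta|$ at small mutual separation is sufficiently suppressed to give a summable contribution. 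A secondary but routine technical point is handling small neighborhoods of $x_0 \in \{0,\pi\}$ where the covariance of $\pk_n$ fails to be stationary; these contribute negligibly to $N_\tau$.
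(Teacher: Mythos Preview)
This theorem is not proved in the present paper; it is quoted from \cite{YaZe}, and only a sketch of the Yakir--Zeitouni argument appears in Section~\ref{sec:YZ}. That argument takes a route quite different from yours. After a first-moment computation giving the intensity $\sqrt{\pi/3}$ for the point process $\CM_n$ of approximate local minima, Yakir and Zeitouni do \emph{not} compute higher factorial moments. Instead they exploit a Gaussian-specific self-similarity: $\sqrt{1-n^{-2}}\,P_n + n^{-1}Q_n$ has the same law as $P_n$ for an independent copy $Q_n$, so any subsequential limit of $\CM_n$ is invariant under independent Gaussian displacement of its points. Liggett's theorem \cite{Liggett} then forces the limit to be a homogeneous Poisson process, and the first moment pins down the intensity. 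This sidesteps entirely the short-range analysis you identify as the main obstacle.

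Your Kac--Rice/factorial-moment route is in principle viable, and your first-moment computation recovers the correct constant $\lambda = 2\sqrt{\pi/3}$. But the proposal has a genuine gap precisely where you flag it: for $k\ge 2$ the $k$-point Kac--Rice density for local minima of $|\zeta|$ at height $\le \tau/n$ degenerates as points coalesce, and you give no mechanism to show the contribution from $k$-tuples at rescaled separation $O(1)$ is negligible. This is not a routine Taylor expansion of the sinc kernel; it amounts to a quantitative repulsion statement for nearby low minima (compare \Cref{lem:neararcs} in the discretized setting here). Without it the factorial moments are not shown to converge and the method of moments does not close. The Liggett-invariance argument is attractive precisely because it trades this hard local estimate for a soft global statement about distributional invariance.
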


As shown in \cite[Section 5]{YaZe}, their argument in fact extends to allow some distributions with a small Gaussian component -- specifically,  $\xi$ of the form 
\begin{equation}	\label{eqn:YZ:xi}
\xi'+\delta X
\end{equation}
with $\delta$ at least of order $n^{-1}\log n$, where $\xi'$ and $X$ are independent, $X\sim \CN_\R(0,1)$, and $\xi'$ is an arbitrary random variable satisfying Cram\'er's condition.
While Cram\'er's condition is weaker than assuming a bounded density, it does not allow $\xi'$ to be discrete.


In the present work we show that the limiting exponential law for $\mk_n$ is universal.
Here and in the sequel, $\P_{\CN_{\R}(0,1)}$ denotes a probability measure under which the real variables $\xi$ or $\xi',\xi''$ are standard Gaussian. 

\begin{theorem}[Main result]
\label{thm:main}
Assume $\xi$ is a centered sub-Gaussian variable of unit variance, which is either real-valued, or takes the form $\frac1{\sqrt{2}}(\xi'+\ii\xi'')$ for iid real variables $\xi',\xi''$.
Then for any $\tau>0$,
\begin{equation}
\P\Big(\, m_n> \frac{\tau}n\,\Big) - \P_{\CN_\R(0,1)} \Big(\, m_n >\frac{\tau}n \,\Big) \longrightarrow 0
\end{equation}
as $n\to\infty$.
\end{theorem}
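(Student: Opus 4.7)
My approach follows the moments method combined with a quantitative local central limit theorem in a high-dimensional phase space, as the abstract foreshadows.

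First I would discretize the minimum. Using the high-probability bound $\|P_n'\|_\infty = O(n\sqrt{\log n})$, partition $[-\pi,\pi]$ into $\sim n$ sub-arcs of length $\sim 1/n$ and replace $m_n$ by the minimum on a fine discrete refinement at negligible cost. Let $N_\tau$ count the arcs on which $|P_n|$ comes within $\tau/n$ of zero, defined so that $\{m_n>\tau/n\} = \{N_\tau = 0\}$ outside a null event. Bonferroni's inequality (the method of moments) reduces the convergence of $\P(N_\tau = 0)$ to the convergence, for each fixed $m \ge 1$, of the factorial moment $\E\binom{N_\tau}{m}$ to its Gaussian counterpart. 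Unfolding the factorial moment produces an integral of joint small-value probabilities
\[
\P\bigl(|P_n(x_i)| \le \tau/n,\; 1\le i \le m\bigr)
\]
over $m$-tuples of well-separated points $x_1,\dots,x_m$.

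Next, to analyze each joint small-value probability, I would augment the complex values $P_n(x_i)$ with the complex derivatives $P_n'(x_i)$ to form the $\R^{4m}$-valued random walk
\[
W_n \;=\; \frac{1}{\sqrt{2n+1}}\sum_{j=-n}^n \bs\phi_j(x_1,\dots,x_m)\,\Xi_j,
\]
where $\Xi_j$ packages the real and imaginary parts of $\xi_j$ and $\bs\phi_j \in \R^{4m}$ is a deterministic vector encoding the frequencies $\{e(jx_i),\,\ii j\,e(jx_i)\}_{i\le m}$. The first $2m$ coordinates of $W_n$ carry the small-value event while the last $2m$ (the derivatives) supply the Jacobian that drives the Poisson intensity. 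For $(x_1,\dots,x_m)$ in a suitable Diophantine class — no atypically sharp rational approximations among the $x_i$ and their pairwise differences at scale $1/n$ — I would establish a local CLT for $W_n$ at scale $1/n$ with polynomial error, via Fourier inversion combined with an Edgeworth expansion. The crucial input is a small-ball bound
\[
\bigl|\E \exp(\ii\, t \cdot \bs\phi_j \Xi_j)\bigr| \;\le\; 1 - c
\]
on a positive proportion of indices $j$ (for $|t|$ in the relevant range), which reduces via Weyl-type estimates to the Diophantine hypothesis on the angles.

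Integrating the resulting limiting density over the product of balls of radius $\tau/n$ in the first $2m$ coordinates yields the same asymptotic value as in the Gaussian model, since the two models produce the same limiting covariance for $W_n$. Non-Diophantine $m$-tuples are treated separately using crude small-ball bounds together with an upper bound on the Lebesgue measure of the bad set on the torus. Summing over $m$ with the alternating Bonferroni signs delivers $\P(m_n>\tau/n) - \P_{\CN_\R(0,1)}(m_n>\tau/n) \to 0$. The main obstacle is the quantitative $4m$-dimensional LCLT at the matching scale $1/n$: the covariance of $W_n$ can become ill-conditioned as the angles approach arithmetic alignment, so one must simultaneously carve out a rich enough Diophantine good set of configurations and derive characteristic-function small-ball estimates that are uniform over this set and strong enough to absorb the Bonferroni sum over $m$. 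Calibrating the Diophantine threshold against the error in the LCLT, uniformly in the configuration, is the technical heart of the argument.
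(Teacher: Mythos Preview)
Your high-level strategy matches the paper's: moment comparison for a count of near-minima, universality of the joint law of $(P_n(x_i),P_n'(x_i))_{i\le m}$ via an Edgeworth-type local CLT for the associated $\R^{4m}$ walk, Diophantine control on the characteristic function, and crude bounds on the exceptional set of angles. However, your discretization step contains a genuine gap.

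You take arcs of length $\sim 1/n$ and then pass from ``$|P_n|$ dips below $\tau/n$ on arc $I_i$'' to the pointwise event $|P_n(x_i)|\le\tau/n$ (later augmented by $P_n'(x_i)$). But at scale $1/n$ the local linearization is useless: with high probability $\|P_n''\|_\infty=O(n^2\log^{O(1)}n)$, so on an arc of length $1/n$ the linearization error is $O(\log^{O(1)}n)$, far larger than the target scale $\tau/n$. Thus the arc-dip event cannot be read off from $(P_n(x_i),P_n'(x_i))$ alone, and your passage to a region in $4m$-dimensional phase space breaks down. The paper resolves this by taking a much finer mesh of $N\asymp n^2/\log^{K_0}n$ points (so the linearization error on each mesh interval is $O(\log^{O(1)}n/n^2)\ll\tau/n$), at the price of overcounting: each approach of the curve to the origin is now witnessed by $\sim N/n$ mesh points. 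To undo this, the paper introduces a selection event $\CA_\alpha$ requiring that the minimizer $x_\alpha+Y_\alpha$ of the local linearization $F_\alpha$ fall inside $I_\alpha$; this picks out essentially one mesh point per near-minimum and is precisely what makes the joint event a product of thin rectangles in phase space (the region $\CU$ in Section~4). This selection mechanism is the missing ingredient in your sketch. A secondary point: in the real-coefficient case the Diophantine ``spread'' condition must also exclude near-coincidences of $x_i+x_j$, not only differences, since $P_n(-x)=\overline{P_n(x)}$; this enters the proof of the characteristic-function decay estimate.
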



\begin{remark}\label{rmk:extensions}
In the proof we treat the case \eqref{model:YZ} with real-valued $\xi$ -- the complex case is slightly simpler.
The necessary modifications\revised{,} 
as well as an extension to another model of random trigonometric series, are given in \Cref{sec:extensions}.
\end{remark}

\begin{remark}
The sub-Gaussianity assumption is mainly for convenience, and
one can check that for our arguments it suffices to assume $\xi$ has a finite moment of sufficiently large order.
\end{remark}

As an immediate consequence we extend \Cref{thm:YZ} to general sub-Gaussian coefficients:

\begin{corollary}	\label{cor:main}
The limit \eqref{YZ:lim} holds when $\xi$ is any sub-Gaussian random variable of mean zero and unit variance. 
\end{corollary}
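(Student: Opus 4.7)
The plan is to derive this as an immediate consequence of \Cref{thm:main} combined with the Gaussian baseline provided by \Cref{thm:YZ}. Since the statement of \Cref{thm:main} already reduces computing the limiting distribution of $n\cdot m_n$ under general sub-Gaussian coefficients to the same computation under Gaussian coefficients, the remaining work is just to verify that the hypotheses of the two theorems line up in both the real and complex cases covered by the corollary.

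Concretely, I would fix $\tau>0$ and write the triangle inequality
\begin{equation*}
\Big|\P\Big(m_n > \tfrac{\tau}{n}\Big) - e^{-\lambda\tau}\Big|
\;\le\; \Big|\P\Big(m_n > \tfrac{\tau}{n}\Big) - \P_{\CN_\R(0,1)}\Big(m_n > \tfrac{\tau}{n}\Big)\Big|
\;+\; \Big|\P_{\CN_\R(0,1)}\Big(m_n > \tfrac{\tau}{n}\Big) - e^{-\lambda\tau}\Big|.
\end{equation*}
In the real case, $\xi$ is sub-Gaussian with mean zero and unit variance by hypothesis, so \Cref{thm:main} applies and the first term on the right tends to $0$. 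The probability $\P_{\CN_\R(0,1)}$ in the second term is by definition the law under standard real Gaussian coefficients, which falls within \Cref{thm:YZ}, so this second term also tends to $0$.

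In the complex case, one decomposes $\xi = \tfrac{1}{\sqrt 2}(\xi' + \ii\,\xi'')$ with $\xi',\xi''$ iid real; sub-Gaussianity of $\xi$ is equivalent to sub-Gaussianity of the real and imaginary parts, and the mean-zero, unit-variance normalization of $\xi$ translates to the normalization of $\xi',\xi''$ required in \Cref{thm:main}. Under $\P_{\CN_\R(0,1)}$ the pair $(\xi',\xi'')$ becomes iid standard real Gaussian, so $\xi$ becomes standard complex Gaussian, which is once more covered by \Cref{thm:YZ}. The same triangle inequality then gives convergence to $e^{-\lambda\tau}$, completing the corollary. There is no genuine obstacle here: the entire content lies in \Cref{thm:main}, whose proof occupies the body of the paper.
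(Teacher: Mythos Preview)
Your proposal is correct and matches the paper's approach exactly: the paper states the corollary as ``an immediate consequence'' of \Cref{thm:main} and \Cref{thm:YZ} with no further argument, and your triangle-inequality splitting is precisely the one-line justification behind that phrase. One minor caution: your sentence ``one decomposes $\xi = \tfrac{1}{\sqrt 2}(\xi' + \ii\,\xi'')$ with $\xi',\xi''$ iid real'' should be read as invoking the structural hypothesis already present in \Cref{thm:main} rather than as a claim that every complex sub-Gaussian variable admits such a decomposition, since the corollary is implicitly restricted to the same class of $\xi$ covered by the main theorem.
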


In particular, \eqref{YZ:lim} holds for Rademacher polynomials, which were the focus of the aforementioned works of Littlewood and others. In fact, the Rademacher case in some sense captures the main challenges for our proof.
We comment on some of these challenges below.
See Figure \ref{fig1} for a numerical illustration of the universality phenomenon.

\begin{figure}[ht!] \label{fig1}
	\includegraphics[width=80mm]{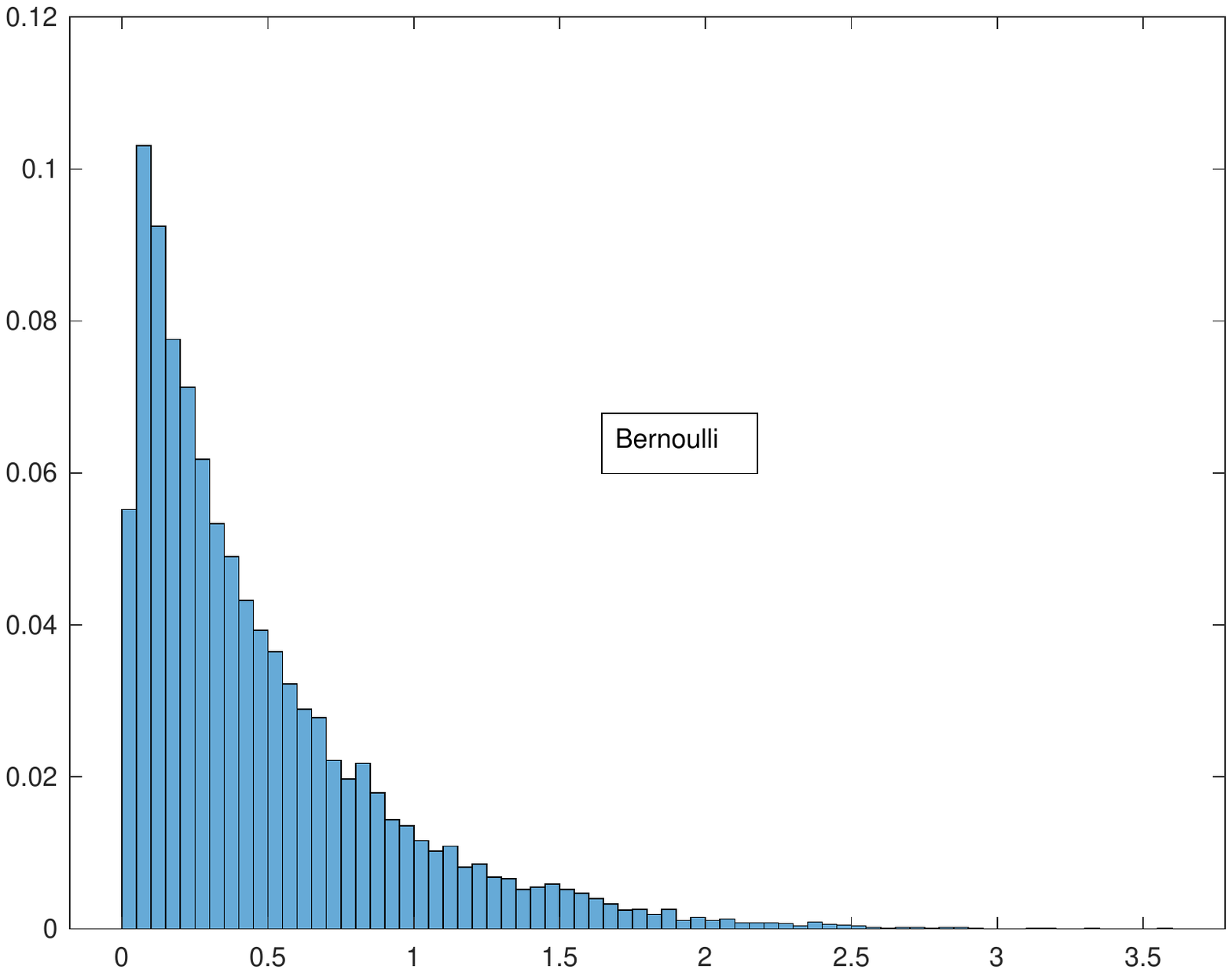}
	\includegraphics[width=80mm]{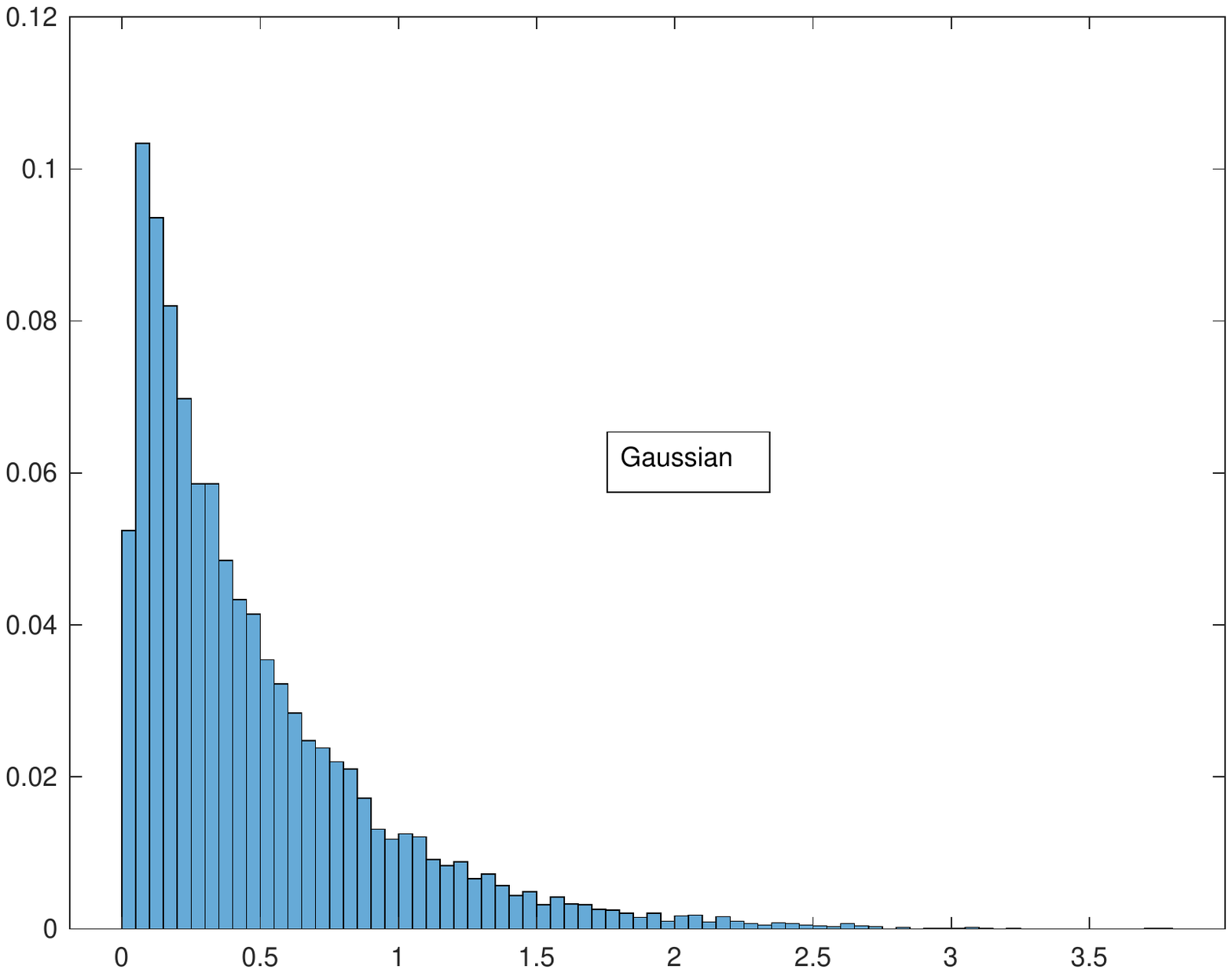}
	\caption{Histogram of the minimum modulus over $10^4$ points equally spaced points on the unit circle, for $10^4$ samples of a random degree 20 polynomial $P_n(x)$ of \eqref{model:YZ} with Rademacher (left) and Gaussian (right) coefficients.}
\end{figure}

We mention that the distribution of the \emph{maximum} value over a curve for various random analytic functions has been studied extensively; see for instances the books \cite{AT,AW} and the references therein.
Sharp asymptotics for the maximum of random trigonometric polynomials with Rademacher coefficients were obtained by Salem and Zygmund \cite{SaZy} and Hal\'asz \cite{Halasz73}, and extended to more general coefficient distributions by Kahane \cite{Kahane-book}. 
In recent years there has been particular focus on characteristic polynomials of random unitary matrices, with $\gamma$ the unit circle \cite{ABB,PaZe,CMN,CoZe}, and the Riemann zeta function on a randomly shifted unit interval on the critical axis \cite{ABBRS,Najnudel,Harper,ABR}. 
Such questions are closely tied to a fine understanding of large deviations and concentration of measure for values of the function at given points. 

The minimum modulus has received comparatively less attention. As we explain below, its behavior is governed by central limit theorems and anti-concentration for the distribution at given points. (Another well-known instance of the dichotomy of concentration/anti-concentration for large/small values of random fields is in the study of singular values of random matrices.)

We further note that proving universality for {\it roots} of classical random ensembles has become an active direction of research in recent years, see for instance \cite{BD2, DHNgV, DONgV, IKM, KZ, NgNgV, ONgV, TV} and the references therein.  Our main result stands out from the above works in two ways: that our focus is not on the statistics of roots, and our method is totally different. 
Corollary \ref{cor:main} can be seen as a polynomial analogue of the result \cite{TVleast} by Tao an Vu where they showed that the least singular value statistics of random iid matrices is universal, although there is no real connection between the random matrix model and our random polynomials. It is remarked that the study of both the minimum modulus of Kac polynomials and of the least singular values of random matrices have important implications to the study of the condition number of matrices, see for instance \cite{BG} and \cite{TVcond}.

Finally, we note that since the completion of this work, there has been progress on the related problem of the distance of the \emph{nearest root} of Kac polynomials to the unit circle. A beautiful result of Michelen and Sahasrabudhe \cite{MiSa} establishes the limiting distribution for the Gaussian case, resolving a conjecture of Shepp and Vanderbei \cite{ShVa}. 
In recent work with Yakir and Zeitouni \cite{CNYZ} we apply some tools developed in the present paper to show their result is universal. 

\subsection{Some comments on the proof}
\label{sec:highlevel}
We briefly sketch some highlights of the proof of \Cref{thm:main}.
Consider the parametrized random curve $\{P_n(x):x\in [-\pi,\pi]\}$ as the trajectory of a particle in the complex plane.
Following \cite{KSch} we approximate the time the particle is closest to the origin by a point in a discrete mesh $\CX=\{x_\al\}_{\al=1}^N\subset[-\pi,\pi]$. Since the velocity $P_n'(x)$ is typically of order $n$, in order to capture this moment we must take $N$ much larger than $n$. However, this means that each approach within distance $O(1/n)$ of the origin will carry several points $P_n(x)$, $x\in \CX$ near the origin, so that a union bound over events that $P_n(x_\al) = O(1/n)$ is too wasteful to isolate the distribution of $m_n$. Following \cite{YaZe}, we isolate a single time $x_\al\in \CX$ for each approach, so that $|P_n(x_\al)|$ is approximately a local minimum, by considering both $P_n(x_\al)$ and $P_n'(x_\al)$ -- the precise criterion is given in \Cref{sec:linearize}. 
The result is a collection of events $\CA_\al$, $\al\in [N]$, that $x_\al$ is an approximate local minimizer, with each event determined by the positions and velocities of the particle on the discrete set $\CX$. In this way we obtain a point process $\CM_n$ on $\R_+$ of approximate local minima $n|P_n(x_\al)|$, rescaled so that the global minimum is of order one.

For the Gaussian case, it was shown in \cite{YaZe} that $\CM_n$ is approximately a Poisson point process of intensity $2\sqrt{\pi/3}$, from which the result clearly follows. In \Cref{sec:YZ} we provide a sketch of their key argument using an invariance principle of Liggett.
For universality, our approach is to establish universality for the joint distribution of 
\[
S_n=S_n(\al_1,\dots, \al_m):=(P_n(x_{\al_i}), P_n'(x_{\al_i}))_{i\in [m]}\in \C^{2\mom}
\]
giving the positions and velocities of the particle at any fixed collection of times $x_{\al_1},\dots, x_{\al_m}$; this allows us to deduce universality for the global minimum by comparison of moments. 

The event that the real and imaginary parts of the positions and velocities
lie in given ranges, and moreover that $\CA_{\al_i}$ holds for each $i\in [m]$, is the event that the vector $S_n$ lies in a certain compact domain $\CU_n$ in $4m$-(real-)dimensional phase space. While $\CU_n$ has piecewise smooth boundary, its regularity depends strongly on $n$, so that estimating its measure under the law of $S_n$ requires precise estimates of the measure of boxes at polynomially-small scales. 

Recalling that $P_n$ is a trigonometric polynomial, we see that $S_n$ is a random walk of the form $\sum_{j=-n}^n\xi_j \wvec_j$, with $\wvec_j\in \R^{4m}$ giving the real and imaginary parts of $e(jx)$ and its derivative $je(jx)$ at the times $x_{\al_1},\dots, x_{\al_m}$. In particular, when the coefficients $\xi_j$ are Gaussian, $S_n$ is a Gaussian vector, and so the main problem is to obtain a quantitative central limit theorem for $S_n$ when the coefficients are general sub-Gaussian variables. This, as well as a small ball estimate, hinge on a strong decay estimate on the characteristic function of $S_n$ (\Cref{thm:char}), which is the main technical component of the proof.
(In fact our argument yields more than a CLT, giving a quantitative \emph{Edgeworth expansion} for the distribution of $S_n$, though for our purposes we only need that each term of the expansion is smooth.)

In our general setting and in particular when the coefficients have discrete distribution, the distribution of the polynomial and its derivative at given points $x_{\al_1},\dots, x_{\al_m}$ depends strongly on arithmetic properties of the $x_{\al_i}$ (compared to the complex Gaussian case of \Cref{thm:YZ} where the distribution is stationary under rotations.)
In particular, the desired control on the characteristic function does not hold for all choices of the $x_{\al_i}$ -- basically when two of the points are too close together or nearly antipodal, or when $e(x_{\al_i})$ is close to a root of unity of order $n^{o(1)}$ for some $i\in [m]$. 
We handle such ``bad'' $m$-tuples with relatively crude arguments (following \cite{KSch}), and establish the decay estimate on the characteristic function for ``nice'' tuples.

The latter is the most technically challenging part of the proof. 
A similar estimate for the case $\mom=2$ was obtained in \cite{DNN}, but the generalization to higher dimensions, together with the complexity of the case when $\xi$ is real-valued, pose significant challenges. 
For this, roughly speaking, we must show that it is not possible to simultaneously dilate the steps $\wvec_j$ of the walk by a factor $K$, for any $K=n^{O(1)}$, so that their projections $\psi_j$ in some common direction all approximately lie in the integer lattice.
We argue by contradiction, showing that if there is such a projection and dilation, then the sequence $\psi_j$ can be locally approximated by polynomial progressions of controlled degree. 
Here we crucially use the trigonometric properties of the steps $\wvec_j$. Combining this information with some judicious differencing manipulations, we can isolate an angle $x_i$ that is well-approximated by a rational of small denominator, contradicting the smoothness assumption.

To summarize, some highlights of our note include:
\begin{enumerate}
\item  
A nearly sharp characterization, in terms of arithmetic properties, of the collection of arcs of the circle over which the Kac polynomial is strongly approximated by a Gaussian Kac polynomial (in the sense of joint distributions at any fixed number of points);
\item Sharp small ball estimates under microscopic scaling for random walks in $\R^m$ of the form $\sum_j\xi_j (g(\frac{jt_1}{n}),\dots, g(\frac{jt_m}{n}))$ for various smooth functions $g:S^1\to \C$, such as $e(x)$, or $x\sin x$;
\item Local limit theorems for such high-dimensional random walks;
\item A sub-polynomial decay estimate on the associated characteristic function, which greatly improves on estimates from \cite{KSch}. 
\end{enumerate}
 All of these results seem to be new and of independent interest.

\subsection{Organization}

In \Cref{sec:prelim} we will discuss the proof of \cite{YaZe} and reduce our task to establishing \Cref{prop:joint}, establishing universality for the joint distribution of low-lying near-local minima over a discrete subset of the torus. 
Along the way we recall some lemmas from \cite{YaZe}, and identify two important arithmetic properties for collections of points in the torus that will be crucial for subsequent analysis.
\Cref{sec:RW} reformulates \Cref{prop:joint} in terms of a vector-valued random walk, and proves it using a small-ball estimate (\Cref{thm:smallball}) and local central limit theorem (\Cref{thm:box}), which are consequences of a strong decay estimate for the characteristic function (\Cref{thm:char}).
The deduction of the main result from \Cref{prop:joint} is given in Sections \ref{sec:proof-moments} and \ref{sec:proof-main}.
\Cref{thm:smallball} and \Cref{thm:box} are deduced from \Cref{thm:char} in Sections \ref{sec:smallball} and \ref{sec:box}, respectively, and \Cref{thm:char} is proved in \Cref{sec:char}.
Finally, in \Cref{sec:extensions} we describe how our result can be extended to other models of random trigonometric polynomials.

\subsection{Notation}
\label{sec:notation}

We write $C,C', C_0, c$ etc.\ to denote positive absolute constants, which may change from line to line, while $C(\tau)$ etc.\ denotes a constant that depends only on the parameter (or set of parameters) $\tau$.
We use the standard asymptotic notation $f=O(g)$, $f\ll g$ and $g\gg f$ to mean $|f|\le Cg$ for some absolute constant $C>0$, and $f=O_\tau(g)$, $f\ll_\tau g$ and $g\gg_\tau f$ to mean $|f|\le C(\tau) g$. For positive sequences $\{f_n\}, \{g_n\}$ we say that $g_n=o(f_n)$ and $f_n =\omega(g_n)$ if $\lim f_n/g_n \to \infty$ with $n$.
We allow implied constants to depend on the sub-Gaussian constant of $\xi$ without explicitly indicating this.

For a real number $x$, $\|x\|_{\R/\Z}$ denotes the distance from $x$ to the nearest integer, and $m=\mLeb(\cdot)$ denotes the Lebesgue measure on $\R^d$ for any $d$. For a compact interval $J\subset \R$ we write $|J|:=\mLeb(J)$ for its length. 
$\{t\}= t-\lf t\rf$ denotes the fractional part of $t\in \R$.
We write $e_n(\theta)$ for $e(\theta/n)$.
The singular values of a matrix $M$ are ordered
$\sigma_1(M)\ge \sigma_2(M)\ge\cdots$.

Sequences $(\xi_j)_j$ are understood to be sequences of iid copies of the variable $\xi$ from \Cref{thm:main}.
We write $\P_\Bg$ for a probability measure under which the coefficients $\xi_j$ in \eqref{model:YZ} are standard real Gaussians, and write $\E_\Bg$ for the associated expectation. (This notation is only used for comparisons of random variables in law -- we do not consider couplings.)


\subsection{Acknowledgements}

We thank 
Pavel Bleher, Yen Do, Oanh Nguyen, Oren Yakir and Ofer Zeitouni for helpful discussions and comments, and to Yakir and Zeitouni for showing us an early draft of their work \cite{YaZe} on the Gaussian case.
This project was initiated at the American Institute of Mathematics meeting ``Zeros of random polynomials'' in August 2019, where Bleher and Zeitouni were also participants. 
In particular, the idea used here and in \cite{YaZe} to study local linearizations emerged from those discussions.
We thank the workshop organizers and the Institute for providing a stimulating research environment.

\section{Preliminary reductions} 
\label{sec:prelim}

Our main objective in this section is to reduce our task to proving \Cref{prop:joint} below, which gives a comparison principle for the joint distribution of low-lying values for a discretized process over the circle.
Along the way we recall elements of the proof from \cite{YaZe} that we will need. For completeness we also include a brief description of their argument for the Gaussian case.

\subsection{Passage to local linearizations}
\label{sec:linearize}

We begin by recalling the approach from \cite{YaZe} for selecting near-local-minimizers of $|P_n(x)|$ on a discrete set; we refer to \Cref{sec:highlevel} for the high-level motivation of this approach. 
The criterion for $x_\al$ to be such a representative point is in terms of the local linearization $F_\al$ of $P_n$ at $x_\al$ -- the intuition is that for the mesh point $x_\al$ that is closest to a local minimizer of $|P_n(x)|$, it will also be close to the minimizer of $|F_\al(x)|$.
A key take-away from this approximation is that all information on near-minimizers of $|P_n(x)|$ is encoded in the values of $P_n$ \emph{and its derivative} at the mesh points.

We collect some notation and lemmas from \cite{YaZe}, with some minor modifications. 
Let $\Cp>4$ be a sufficiently large constant and set 
\begin{equation}	\label{def:N}
N:=\bigg\lfloor \frac{n^2}{\log^{\Cp} n} \bigg\rfloor.
\end{equation}
We divide $[-\pi,\pi]$ into $N$ intervals: letting
\[
x_\al = \frac{2\pi \al}{N}\,, \qquad\al =1,\dots, N,
\] 
we decompose
\[
[-\pi, \pi] = \bigcup_{\al=1}^N I_\al,
\quad\text{ where }  I_\al = \Big[x_\al- \frac{\pi}{N}, x_\al + \frac{\pi}{N}\Big].
\]
Note that for the case of real coefficients it suffices to consider $x_\al\in [0,\pi]$. 


Define 
\begin{align}	\label{def:YZ}
Y_\al &:= -\frac{\Re ( \pol_n(x_\al) \wb{ \pol_n'(x_\al)})}{| \pol_n'(x_\al)|^2}  \,,
\qquad
Z_\al :=   n\frac{\Im (\pol_n(x_\al) \wb{ \pol_n'(x_\al)})}{|\pol_n'(x_\al)|} \,.
\end{align}
We denote the local linearizations of $\pol_n$ given by
\begin{equation}
F_\al(x):= \pol_n(x_\al) + (x-x_\al) \pol_n'(x_\al).
\end{equation}
As shown in \cite[Section 1.3]{YaZe}, $|F_\al(x)|$ is minimized at $x=x_\al+Y_\al$, where it takes the value $|Z_\al|/n$; thus
\begin{equation}	\label{obs:YZ}
|F_\al(x_\al+Y_\al)| = |Z_\al|/n = \min_{x\in \R} |F_\al(x)|.
\end{equation}
(The sign is kept on $Z_\al$ only for convenience -- we mention that the sign encodes whether the origin is to the left or right of the curve $\{P_n(x): x\in [-\pi,\pi]\}$ as $x$ increases through $x_\al$, but this fact will not be used.)

We denote the $2\pi n$-periodic trigonometric polynomial 
\begin{equation}	\label{def:ptil}
\tpol_n(s) = \pol_n(s/n),\qquad s\in \R.
\end{equation}
This scaling will often be convenient since all of its derivatives are typically of order 1. 

We consider the collection $\{Z_\al\}_{\al\in [N]}$ as a point process on $\R$.
The scaling by $n$ means we focus on (signed) low-lying values of $|P_n|$.
Now we give the criterion by which ``representative'' near-minimizers are selected.
Let  $\CA_\al :=  \CA_\al' \cap \CA_\al''$ where 
\[
\CA_\al' :=\{|Y_\al | \le \pi/N, |Z_\al| \le \log n \}
\]
and
\begin{align*}
\CA_\al'' :&=  \{|\pol_n(x_\al)| \le n^{-1/2}, |\pol_n'(x_\al)|\in [ n\log^{-\Cp/2} n, C_0  n\sqrt{\log n}] \}\,,
\end{align*}
and define the point process
\begin{equation}\label{eqn:CM}
\Points_n = \sum_{\al=1}^N \delta_{X_\al},\qquad
X_\al:= Z_\al \1_{\CA_\al} + \infty \1_{\CA_\al^c}\,.
\end{equation}
The event $\CA_\al'$ is the condition on the local linearization that was described above, while $\CA_\al''$ enforces some regularity of $P_n$ on $I_\al$.

The following control on the second derivative will be used to show that the local linearizations $F_\al$ are good approximations to $\pol_n$ at the scale of the intervals $I_\al$. 

\begin{lemma}[Derivative bounds]
\label{lem:derivatives}
For $\dbd>1$ and integer $k\ge 0$ let $\CG_{k}(\dbd)$ be the event that 
\[
\sup_{s\in \R } |\tpol_n^{(k)}(s)|=\frac1{n^k}\sup_{x\in [-\pi,\pi] }|P_n^{(k)}(x)|  \le  \log^\dbd n.
\]
There exists $c=c(k)>0$ depending only on $k$ and the sub-Gaussian moment of $\xi$ such that
\[
\P(\CG_{k}(\dbd)^c) \le \exp(-c \log^{2\dbd} n).
\]
\end{lemma}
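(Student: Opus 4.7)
The plan is to establish the pointwise sub-Gaussian tail for $\tpol_n^{(k)}(s)$ at any fixed $s$, and then upgrade it to a supremum bound by combining a union bound over a polynomially fine net with a Lipschitz estimate on $\tpol_n^{(k)}$.

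First, I would note that
\[
\tpol_n^{(k)}(s) = \frac{1}{\sqrt{2n+1}} \sum_{j=-n}^n (\ii j/n)^k \xi_j \, e(js/n),
\]
so at each fixed $s\in \R$, writing out the real and imaginary parts, $\tpol_n^{(k)}(s)$ is a sum of independent sub-Gaussian variables with variance proxy bounded by a constant multiple of
\[
\frac{1}{2n+1}\sum_{j=-n}^n (j/n)^{2k} = O_k(1).
\]
The standard sub-Gaussian tail therefore gives $\P(|\tpol_n^{(k)}(s)|>\tfrac12 \log^\dbd n) \le 2\exp(-c\log^{2\dbd} n)$ for some $c=c(k)>0$.

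Next I would choose a mesh $\CX_\delta \subset [-\pi n, \pi n]$ of spacing $\delta = n^{-3}$, so that $|\CX_\delta| = O(n^4)$, and apply the previous tail bound together with a union bound:
\[
\P\!\left(\max_{s\in \CX_\delta} |\tpol_n^{(k)}(s)| > \tfrac12 \log^\dbd n \right) \le O(n^4) \cdot \exp(-c\log^{2\dbd} n) \le \exp(-c'\log^{2\dbd} n),
\]
using that $\log^{2\dbd} n$ dominates any multiple of $\log n$ since $\dbd>1$. It remains to extend the bound from the net to all of $\R$ via the $2\pi n$-periodicity. For this I would establish a high-probability Lipschitz estimate: by another sub-Gaussian tail bound (applied to each $\xi_j$) with a union bound, we have $\max_j |\xi_j| \le C\sqrt{\log n}$ with probability $1-\exp(-c\log^{2\dbd} n)$, and on this event
\[
\sup_{s\in\R}|\tpol_n^{(k+1)}(s)| \le \frac{1}{\sqrt{2n+1}}\sum_{j=-n}^n (|j|/n)^{k+1}|\xi_j| \le C'\sqrt{n\log n}.
\]
Thus between consecutive mesh points $|\tpol_n^{(k)}|$ changes by at most $\delta \cdot C'\sqrt{n\log n} = O(n^{-5/2}\sqrt{\log n})$, which is negligible compared to $\log^\dbd n$. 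Combining the two high-probability events yields the claim with a slightly smaller constant $c$.

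I don't expect a serious obstacle here: the only point requiring any care is confirming that the sub-Gaussian union bound loss of $O(\log n)$ is absorbed by $\log^{2\dbd} n$, which holds because $\dbd>1$. One could alternatively avoid the probabilistic Lipschitz estimate by invoking Bernstein's inequality for trigonometric polynomials of degree $n$ applied to $\tpol_n^{(k)}(ns)$ to get a deterministic Lipschitz bound in terms of $\sup_{s}|\tpol_n^{(k)}(s)|$, but the direct coefficient-level bound above is cleaner and yields exactly the exponent $\exp(-c\log^{2\dbd} n)$ demanded by the lemma.
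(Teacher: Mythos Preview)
Your overall strategy is correct and close to the paper's, but there is a quantitative slip in the Lipschitz step. You claim that $\max_j|\xi_j|\le C\sqrt{\log n}$ with failure probability $\exp(-c\log^{2\dbd}n)$; however, the sub-Gaussian tail at level $C\sqrt{\log n}$ gives only $\P(|\xi_j|>C\sqrt{\log n})\le n^{-cC^2}$, and after the union bound over $2n+1$ coefficients this is merely polynomially small, not $\exp(-c\log^{2\dbd}n)$. The fix is immediate: take the threshold $C\log^{\dbd}n$ instead, so that each tail is $\exp(-cC^2\log^{2\dbd}n)$ and the union bound loss of $\log n$ is absorbed (here you genuinely use $\dbd>1$). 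The resulting crude bound $\sup_s|\tpol_n^{(k+1)}(s)|\ll \sqrt{n}\log^{\dbd}n$ still makes the oscillation over a gap of size $n^{-3}$ negligible, and the rest of your argument goes through unchanged.

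For comparison, the paper takes the Bernstein-inequality route you mention at the end: since $\tpol_n^{(k)}$ is a trigonometric polynomial of degree $1$ in the rescaled variable, Bernstein gives the \emph{deterministic} bound $\sup_s|(\tpol_n^{(k)})'(s)|\ll \sup_s|\tpol_n^{(k)}(s)|$, so the supremum is attained up to a factor $1/2$ on a net of only $O(n)$ points, and one applies the pointwise sub-Gaussian tail and union bound directly. This avoids the auxiliary probabilistic event on $\max_j|\xi_j|$ and the finer $O(n^4)$ net. Your approach is slightly more elementary (no Bernstein), at the cost of the extra event and the larger net; both yield the stated bound $\exp(-c\log^{2\dbd}n)$.
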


\begin{proof}
Fix $K$ and $k$.
It suffices to show the claimed bound for  $R:= \Re\tpol_n^{(k)}$. 
By Bernstein's inequality, 
\[
\sup_{t\in [-n\pi, n\pi]} |R'(t)| \ll  \sup_{t\in [-n\pi, n\pi]} |R(t)|\,,
\]
so if we assume that $\sup_t |R(t)|$ is attained at $t_0$, then for all $|t-t_0| \le c_0$ for a sufficiently small constant $c>0$, we have 
\[
|R(t)| \ge |R(t_0)| - |t-t_0| \sup_{t\in [-n\pi, n\pi]} |R'(t)|  > |R(t_0)|/2.
\]
It follows that if we divide $[-n\pi, n\pi]$ into $O(n)$ intervals $J_i$ of sufficiently small length and with midpoints $t_i$, then  we have $\sup_i |R(t_i)| > \frac{1}{2}\sup_{t\in [-n\pi, n\pi]} |R(t)|$. Hence
\begin{align*}
\P(\sup_{t\in [-n\pi, n\pi]} |R(t)| \ge  (\log n)^{\dbd} ) \le \sum_i \P(|R(t_i)| 
&\ge  (\log n)^{\dbd}/2) \\
&\ll n \exp(-c' (\log n)^{2\dbd}) \le \exp(-c (\log n)^{2\dbd})\,,
\end{align*}
where we used a sub-Gaussian tail estimate for the upper bound for each $t_i$.
\end{proof}

The next proposition shows that near-minimizers are typically well separated. 
The proof is a straightforward modification of the proof of \cite[Lemma 2.11]{YaZe} and is deferred to \Cref{app:neararcs}. There is the minor issue that a local minimizer for $\pol_n$ may cause a low value for two neighboring linearizations simultaneously, as accounted for in part (i). This will (unfortunately) present some issues of a purely technical nature in the proof of \Cref{prop:moments} below.

\begin{lemma}
\label{lem:neararcs}
On the event $\CG_{2}(\Cp/2)$ we have
\begin{enumerate}[(i)]
\item If $\CA_{\al}$ and $\CA_{\al +1}$ hold, then 
\[
Y_\al \in [\frac{\pi}{N} - \frac{\pi} {N \log^{\Cp/4} n}, \frac{\pi}{N}].
\]
\item Furthermore, $\CA_\al$ and $\CA_{\al'}$ cannot hold simultaneously as long as 
\[
2\le |\al'-\al| \le \frac{n}{\log^{3\Cp}n}.
\]
\end{enumerate}
\end{lemma}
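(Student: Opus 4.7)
The plan is to exploit that on $\CG_2(\Cp/2)$, where $\sup_x |P_n''(x)| \le n^2 \log^{\Cp/2} n$, the polynomial $P_n$ is very well approximated by its affine Taylor polynomial $F_\al$: $|P_n(x) - F_\al(x)| \le \tfrac{1}{2}(x - x_\al)^2 n^2 \log^{\Cp/2} n$ for all $x$. Setting $\tilde x_\al := x_\al + Y_\al \in I_\al$ and invoking \eqref{obs:YZ}, on $\CA_\al$ we obtain
\[
|P_n(\tilde x_\al)| \le \frac{|Z_\al|}{n} + \frac{Y_\al^2}{2} n^2 \log^{\Cp/2} n \le \frac{\log n}{n} + O\!\left(\frac{\log^{5\Cp/2} n}{n^2}\right) = (1+o(1))\frac{\log n}{n}.
\]
Integrating the second-derivative bound also gives $|P_n'(\tilde x_\al) - P_n'(x_\al)| \le |Y_\al|\, n^2 \log^{\Cp/2} n \le \pi \log^{3\Cp/2} n$; combined with the lower bound $|P_n'(x_\al)| \ge n/\log^{\Cp/2} n$ from $\CA_\al''$, this yields $|P_n'(\tilde x_\al)| \ge \tfrac{1}{2} n/\log^{\Cp/2} n$ for $n$ large.

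Now suppose $\CA_\al \cap \CA_{\al'}$ holds for some $\al \ne \al'$. A second-order Taylor expansion of $P_n$ about $\tilde x_\al$, evaluated at $\tilde x_{\al'}$ and combined with the estimates of the previous paragraph (applied also at $\al'$), yields the master inequality
\begin{equation}\label{eq:plan-master}
|\tilde x_{\al'} - \tilde x_\al|\cdot \frac{n}{2\log^{\Cp/2} n} \le (2+o(1))\frac{\log n}{n} + \frac{1}{2} |\tilde x_{\al'} - \tilde x_\al|^2 \cdot n^2 \log^{\Cp/2} n.
\end{equation}
Writing $k = |\al' - \al|$ and using $|Y_\al|, |Y_{\al'}| \le \pi/N$ together with $x_{\al'} - x_\al = 2\pi(\al'-\al)/N$, we have the a priori bounds $2\pi(k-1)/N \le |\tilde x_{\al'} - \tilde x_\al| \le 2\pi(k+1)/N$.

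For part (ii), fix $k \in [2, n/\log^{3\Cp} n]$. Inserting the lower bound into the LHS and the upper bound into the RHS of \eqref{eq:plan-master} yields
\[
\frac{\pi(k-1)\log^{\Cp/2} n}{n} \le (2+o(1))\frac{\log n}{n} + \frac{2\pi^2(k+1)^2 \log^{5\Cp/2} n}{n^2}.
\]
For $\Cp > 4$ and $n$ large, the LHS dominates both RHS terms uniformly over the range: the $\log n/n$ term is at most $O(\log^{1-\Cp/2} n)$ times the LHS, while the quadratic term is at most $O(\log^{-\Cp} n)$ times the LHS since $n(k-1)/(k+1)^2 \ge n/(6k) \ge \tfrac{1}{6}\log^{3\Cp} n$ throughout the range. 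The resulting contradiction rules out $\CA_\al \cap \CA_{\al'}$.

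For part (i) (so $k = 1$), we instead use \eqref{eq:plan-master} to bound $|\tilde x_{\al+1} - \tilde x_\al|$ from above: the a priori bound $|\tilde x_{\al+1} - \tilde x_\al| \le 4\pi/N$ makes the quadratic term on the RHS of order $O(\log^{5\Cp/2} n/n^2)$, negligible against $\log n/n$, so \eqref{eq:plan-master} reduces to $|\tilde x_{\al+1} - \tilde x_\al| \le (4+o(1)) \log^{\Cp/2+1} n / n^2$. Since $\tilde x_\al \le x_\al + \pi/N \le \tilde x_{\al+1}$, this forces $Y_\al = \tilde x_\al - x_\al \ge \pi/N - (4+o(1))\log^{\Cp/2+1}n/n^2 \ge \pi/N - \pi/(N\log^{\Cp/4} n)$, the last inequality valid for $\Cp > 4$ and $n$ sufficiently large. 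The only real obstacle throughout is careful bookkeeping of the cascade of polylog factors coming from $\CG_2(\Cp/2)$, the mesh scale $N = \lfloor n^2/\log^\Cp n\rfloor$, and the derivative lower bound $n/\log^{\Cp/2} n$ from $\CA_\al''$; the hypothesis $\Cp > 4$ is taken precisely to absorb these losses.
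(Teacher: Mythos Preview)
Your proof is correct and follows essentially the same approach as the paper: both arguments rest on Taylor expansion with the second-derivative bound from $\CG_2(\Cp/2)$ together with the lower bound $|P_n'(x_\al)|\ge n\log^{-\Cp/2}n$ from $\CA_\al''$. The paper phrases everything in terms of the linearizations $F_\al$ (comparing $|F_\al|$ on $I_{\al'}$ with $|F_{\al'}|$ there), whereas you transfer once to $P_n$ at the approximate minimizers $\tilde x_\al=x_\al+Y_\al$ and derive a single ``master inequality'' that handles both (i) and (ii); this is a tidy repackaging rather than a different idea, and the polylog bookkeeping (in particular the use of $\Cp>4$ to obtain $K_0/2+1<3K_0/4$) lines up with the paper's.
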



\subsection{The Yakir--Zeitouni invariance argument}  
\label{sec:YZ}

Now we discuss briefly the key remaining ideas of \cite{YaZe} for the Gaussian case (or the case with small Gaussian component as in \eqref{eqn:YZ:xi}), which employs a strategy used by Biskup and Louidor in their work on extreme values of the planar discrete Gaussian free field \cite{BiLo} .
The approach combines the following ingredients:
\begin{enumerate}
\item A Gaussian computation showing that for any interval $[a,b]\subset \R$ we have $\lim_{n \to \infty} \E (\CM_n([a,b])) = \sqrt{\frac{\pi}{3}} (b-a)$.
\item A consequence of a general result of Liggett \cite{Liggett}:\footnote{\revised{For the interested reader, we note that a new proof of Liggett's general result in a special case sufficient for this application was recently obtained in \cite{CGS:Liggett}.}} that if the law of a point process is invariant under adding an independent Gaussian perturbation to each point, then it is a Poisson point process of constant intensity.
\item A consequence of the Gaussianity of the field $\{P_n(x)\}_{x\in [-\pi,\pi]}$: that if $Q_n$ is an independent copy of $P_n$, then $\wh{P_n}(x) = \sqrt{1-\frac{1}{n^2}} P_n(x) + \frac{1}{n} Q_n(x)$ is identically distributed to $P_n(x)$.
\item The fact that near-minimizers of $|P_n|$ are well separated (from a strengthening of  \Cref{lem:neararcs}).
\end{enumerate}
Roughly speaking, from (3) one can view $\wh{P_n}$ as a perturbation of $P_n$ by an independent Gaussian field $\frac1nQ_n$ of typical size $1/n$, which is the scale of the minimum modulus. Thus, the point process $\wh{\CM}_n$ is obtained from $\CM_n$ by (a slight rescaling and) a perturbation of each point by a standard Gaussian. Now from (4), the low values of $|P_n(x)|$ occur at points $x$ that are sufficiently separated that (as one can show) the values of $Q_n(x)$ at these near-minimizers are nearly uncorrelated. Hence, the point process $\wh{\CM}_n$ is approximately a point process obtained from $\CM_n$ by perturbing each $X_\al$ by an independent Gaussian. From (2) we get that $\wh{\CM}_n$, and hence, $\CM_n$, is a Poisson point process of constant intensity, and from (1) it follows that the intensity is $\sqrt{\pi/3}$. 
(To apply (2) one cannot actually argue at finite $n$ as just described, but instead one needs to pass to subsequential limiting point processes, obtained from the tightness implied by (1); in the end one finds a limiting Poisson point process of the same intensity regardless of the subsequence.)

Morally speaking, the exponential law is then a straightforward consequence of the minimum being approximately the smallest (absolute) value of a Poisson point process on $\R$. 
The formal argument requires some considerable work to justify all of the approximations, and the above sketch glides over many important points; we invite the reader to see \cite{YaZe} for further details.


\subsection{Towards universality: matching moments over smooth points}
\label{sec:moments}

It should be evident that the beautiful argument of \cite{YaZe} just described relies heavily and in several different ways on properties of the Gaussian distribution.
Towards establishing \Cref{thm:main}, our approach is to
 establish universality for the joint distribution of $X_{\al}$ at any fixed number of indices $\al\in [N]$ (in particular this yields universality for the joint intensity functions of the point process $\CM_n$).
From this one can deduce universality of moments $\E(\CM_n([-\tau,\tau])^\mom)$ of all order, leading to universality for the distribution function $\P(m_n\le \tau/n)$.



For general $\xi$, the main difficulty for studying the joint distribution of $P_n(x_i)$ and its derivative at $m$ different points $x_i$, or even at a single point $x$, is that the distribution is highly dependent on arithmetic properties of the points. 
Consider the case of Rademacher coefficients. 
At $x=0$ we have $P_n(0)=\frac1{\sqrt{2n+1}}\sum_{j=-n}^n\xi_j$ -- while from the Central Limit Theorem this approaches the $\CN_\R(0,1)$ distribution, it does so at the slowest possible rate, and the distribution is only smooth (i.e. comparable to Lebesgue measure on balls of radius $\delta$) at scales $\delta$ much larger than $1/\sqrt{n}$.
At $x =\pi/2$ we have that $P_n(\pi/2)$ splits into independent real and imaginary sums, each tending to the $\CN_\R(0,1/2)$ distribution at the slowest possible rate. 
The situation is slightly improved at $x=\pi/4$, for which one can obtain a meaningful small ball estimate at scale $\delta\sim 1/n$ with some effort. 
As we shrink the scale $\delta$ at which we desire $P_n(x)$ to have an effectively smooth distribution, the collection of ``structured'' angles that we must avoid increases. 

Thus we see that Diophantine approximation will play a crucial role in our arguments. 
Indeed, such considerations played a strong role in the argument of Konyagin and Schlag for the upper bound \eqref{KS:UB}. 
That work only dealt with the field at single points, however; to compare the joint distribution of $P_n$ and its derivative at an arbitrary fixed number of points we need finer control. 

We quantify the level of approximability of points $x$ by rationals as follows:

\begin{definition}[Smooth points]
\label{def:smooth}
For $\Smooth>0$, we say a point $t\in \R$ is \emph{$\Smooth$-smooth} if 
\[
\Big\| \frac{p_0 t}{\pi n}\Big\|_{\R/\Z} > \frac{\Smooth}{n}\qquad \forall\; p_0\in \Z\cap [-\Smooth-1,\Smooth+1], p_0 \neq 0 .
\]
We say a tuple $(t_1,\dots, t_\mom)$ is $\Smooth$-smooth if $t_r$ is $\Smooth$-smooth for each $1\le r\le \mom$.
\end{definition}

Thus in the special case that $K<1$ then  $t\in \R$ is \emph{$\Smooth$-smooth} if $\| \frac{t}{\pi n}\|_{\R/\Z} > \frac{\Smooth}{n}.$ Observe also that if $n^{-1+\kappa} \le \|\frac{t}{\pi n}\|_{\R/\Z} \le n^{-2\kappa}$ then $t$ is $n^{\kappa}$-smooth.

The following lets us focus on potential minimizers that are smooth. 

\begin{lemma}[Ruling out bad arcs]
\label{lem:badarcs}
For $\smooth>0$ let $\badarcs(\smooth)$ be the set of points $x\in \R$ such that $nx$ is not $n^\smooth$-smooth. 
There exist absolute constants $\smooth_0,c_0>0$ such that
\[
\P\big(\, \exists x\in \badarcs(\smooth_0): |\pol_n(x)| \le n^{-1+c_0} \,\big) = o(1).
\]
\end{lemma}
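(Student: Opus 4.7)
The plan is a discretization-plus-union-bound strategy. I first use \Cref{lem:derivatives} with $k=1$ and a large constant $\dbd$ to obtain that, with probability $1-o(1)$, $\sup_x|P_n'(x)|\le n\log^{\dbd}n$. Letting $\eta:=n^{-2+c_0}\log^{-\dbd-1}n$, on this event $|P_n|$ oscillates by at most $n^{-1+c_0}\log^{-1}n$ across any $\eta$-interval, so if $\CX_{\bad}$ is a maximal $\eta$-net of $\badarcs(\smooth_0)\cap[-\pi,\pi]$, the lemma reduces to
\[
\Pr\Big(\exists x_*\in \CX_{\bad}:|P_n(x_*)|\le 2n^{-1+c_0}\Big)=o(1).
\]
From \Cref{def:smooth}, $\badarcs(\smooth_0)\cap[-\pi,\pi]$ is covered by intervals of length $\ll n^{\smooth_0-1}/p_0$ centred at $\pi k/p_0$ for $1\le p_0\le n^{\smooth_0}+1$ and $|k|\le p_0$, which gives $|\CX_{\bad}|\ll n^{1+2\smooth_0-c_0}\log^{\dbd+1}n$. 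Hence by a union bound the lemma follows from the pointwise small-ball estimate
\begin{equation}\label{eq:plansb}
\sup_{x_*\in \badarcs(\smooth_0)\cap[-\pi,\pi]}\Pr\big(|P_n(x_*)|\le 2n^{-1+c_0}\big)\ll n^{-2+O(\smooth_0+c_0)},
\end{equation}
provided $\smooth_0,c_0>0$ are chosen sufficiently small.

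The technical core is \eqref{eq:plansb}, which I would establish via Esseen's inequality in $\R^2$. Given $x_*$ within $O(n^{\smooth_0-1}/p_0)$ of $\pi k/p_0$ with $\gcd(k,p_0)=1$ and $p_0\ge 2$, one has $\Pr(|P_n(x_*)|\le\delta)\ll\delta^2\int_{|u|\le 1/\delta}|\phi(u)|du$, where $\phi(u)=\prod_j\phi_\xi(\langle u,c_j\rangle)$ and $c_j=(\cos(jx_*),\sin(jx_*))/\sqrt{2n+1}$. Sub-Gaussianity and non-degeneracy of $\xi$ give $|\phi_\xi(s)|\le\exp(-c\min(\|s/T\|_{\R/\Z}^2,1))$ for a period $T>0$. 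Since for $p_0\ge 2$ the empirical covariance $\sum_jc_jc_j^{\mathsf T}$ is close to $\frac12 I$, one obtains Gaussian decay $|\phi(u)|\le\exp(-c|u|^2)$ on $|u|\le\sqrt n$. For $|u|\in[\sqrt n,n^{1-c_0}]$ one exploits the approximate $2p_0$-periodicity of $j\mapsto e(jx_*)$: the indices $\{-n,\ldots,n\}$ split into residue classes of size $\asymp n/p_0$, and for any direction of $u$ only $O(p_0)$ of the representative vectors $c_r^*$ can yield $\|\langle u,c_r^*\rangle/\pi\|_{\R/\Z}=O(n^{-1/2})$ simultaneously, so the remaining residues each contribute a factor $\ll\exp(-c\,n/p_0)$, giving super-polynomial decay outside a thin set of small Lebesgue measure. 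Integration over $|u|\le 1/\delta$ then yields \eqref{eq:plansb}. The degenerate case $p_0=1$ (with $x_*$ within $n^{\smooth_0-1}$ of $0$ or $\pm\pi$) is handled separately: $\var(\Im P_n(x_*))\asymp n^{2\smooth_0-1}$ still greatly exceeds $n^{-2+2c_0}$, and a conditional small-ball on the real part after controlling the imaginary component gives the same bound.

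The hard part will be to rigorously track the perturbation $x_*-\pi k/p_0=O(n^{\smooth_0-1}/p_0)$ through the high-frequency portion of Esseen's integral. Over a residue block of length $\asymp n/p_0$ the phase drift $e(j(x_*-\pi k/p_0))$ has total magnitude $\asymp n^{\smooth_0}/p_0^2$, which is not uniformly small for intermediate $p_0$; so the exact $2p_0$-periodicity I invoked above must be replaced by a near-periodic argument with the drift losses absorbed into the acceptable $n^{O(\smooth_0+c_0)}$ factor. Ensuring this absorption is the main source of technical complications and is what ultimately forces $\smooth_0$ and $c_0$ to be chosen small.
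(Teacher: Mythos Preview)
Your overall architecture---pass to a fine net using \Cref{lem:derivatives}, then win by a union bound over a pointwise small-ball estimate---is the natural one, and in spirit is close to what Konyagin--Schlag do (which is all the paper invokes). But the uniform pointwise bound you state as \eqref{eq:plansb} is false as written, and your treatment of the $p_0=1$ case does not repair it.

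Take real coefficients (the main case in the paper) and $x_*=0\in\badarcs(\smooth_0)$. Then $P_n(0)=(2n+1)^{-1/2}\sum_j\xi_j$ is a real scalar, so the small ball is one-dimensional and $\Pr(|P_n(0)|\le 2n^{-1+c_0})\asymp n^{-1+c_0}$ for continuous $\xi$ (or $\asymp n^{-1/2}$ for lattice $\xi$), not $n^{-2+O(\smooth_0+c_0)}$. The same phenomenon persists for $x_*$ within $n^{-1}$ of $0$ or $\pm\pi$: your variance claim $\Var(\Im P_n(x_*))\asymp n^{2\smooth_0-1}$ is incorrect---in fact $\Var(\Im P_n(x_*))\asymp\min(1,n^2x_*^2)$, which vanishes at $x_*=0$ and ranges continuously up to order $1$ across the $p_0=1$ arc. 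So the $2$-dimensional Esseen picture collapses to $1$ dimension exactly where you need it most, and a uniform $\delta^2$-type bound is unavailable.

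This is fixable, but not by the sentence you wrote. One route: replace \eqref{eq:plansb} by the non-uniform bound $\Pr(|P_n(x_*)|\le\delta)\ll \delta^2/\sigma_{\min}(x_*)$ with $\sigma_{\min}(x_*)\asymp\min(1,n\,\dist(x_*,\pi\Z))$, and then check that summing this over your net still gives $o(1)$ (it does, with a harmless $\log n$ loss, once you also handle the single $O(1)$ points within $\eta$ of $\pi\Z$ by the $1$-dimensional bound). This is essentially how the degeneracy is absorbed in \cite{KSch}: the paper notes that the only input needed from there is the covariance eigenvalue lower bound $\gg n^2\min(1,|x|,|\pi-x|)^2$, and the Berry--Esseen argument of \cite[Lemma~3.3]{KSch} is run with that $x$-dependent eigenvalue, not a uniform one. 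A second, separate issue is that for $p_0=1$ and $|x_*|\in[n^{-1},n^{\smooth_0-1}]$ there is no useful periodicity of $j\mapsto e(jx_*)$ to exploit, so your sketched high-frequency characteristic-function argument (which is built around approximate $2p_0$-periodicity) does not apply there either; you would need a different mechanism for that range, or revert to the Berry--Esseen route as in \cite{KSch}.
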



\begin{proof}
This follows from the argument for \cite[Lemma 3.3]{KSch}; one only needs two modifications:  
\begin{enumerate}
\item Whereas they considered $A$-smooth points for $A$ fixed, their bounds in fact allow $A$ to grow as fast as $n^{\smooth_0}$ for $\kappa_0$ sufficiently small. (One also notes that their parameter $\eps$ may grow as fast as $O(n^{3/4})$.)
\item Whereas their model takes the sum in \eqref{model:YZ} to run over $[0,n]$ rather than $[-n,n]$, they only need that the covariance matrix for $(\Re P_n(x), \Im P_n(x))$ has eigenvalues bounded below by $\gg n^2 \min( 1, |x|, |\pi-x|)^2$ for $\min(|x|, |\pi-x|) \gg n^{-1-c}$ for a small absolute constant $c>0$, which for the present model follows from display (2.21) in \cite{YaZe}. 
(One may alternatively apply the proof of \cite[Lemma 3.3]{KSch} but condition on the variables $(\xi_j)_{-n\le j<0}$ before applying the Berry--Esseen theorem.)
\end{enumerate}
\end{proof}

With $\smooth_0$ as in \Cref{lem:badarcs} we now consider the thinned point process
\begin{equation}	\label{def:Mn.sharp}
\Points_n^\sharp := \sum_{\al: x_\al \notin \badarcs(\smooth_0)} \delta_{X_\al}.
\end{equation}
\Cref{thm:main} will be deduced from the following comparison of moments. The proof is deferred to \Cref{sec:proof-moments}.

\begin{proposition}[Moment matching]
\label{prop:moments}
For any fixed $\height>0$ and integer $\mom\ge1$ we have 
\begin{equation}\label{eqn:moment}
\lim_{n\to \infty}\E \Big( \Points_n^\sharp\big([-\height,\height]\big)^\mom\Big) = \lim_{n\to \infty}\E_{\Bg} \Big( \Points_n^\sharp\big([-\height,\height]\big)^\mom\Big),
\end{equation}
where we recall that $\E_{\Bg}$ stands for expectation under the Gaussian model from Theorem \ref{thm:YZ}.
\end{proposition}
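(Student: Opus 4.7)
The plan is to expand the $\mom$-th moment as
\[
\E\big(\Points_n^\sharp([-\height,\height])^\mom\big) = \sum_{(\al_1,\dots,\al_\mom) \in [N]^\mom} \P\Big(\bigcap_{i=1}^\mom E_{\al_i}\Big),
\]
where $E_\al := \CA_\al \cap \{|Z_\al|\le \height\} \cap \{x_\al \notin \badarcs(\smooth_0)\}$, and then reduce everything to a universality statement for the joint law of the $4\mom$-dimensional vector
\[
S_n(\avec) := \big( \Re \pol_n(x_{\al_i}), \Im \pol_n(x_{\al_i}), \tfrac1n\Re \pol_n'(x_{\al_i}), \tfrac1n\Im \pol_n'(x_{\al_i}) \big)_{i=1}^\mom,
\]
as sketched in Section~\ref{sec:highlevel}. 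Each event $E_\al$ is measurable with respect to $(\pol_n(x_\al), \pol_n'(x_\al))$, so the joint probability becomes a probability that $S_n(\avec)$ lies in an explicit region $\CU_n(\avec) \subset \R^{4\mom}$ determined by the definitions \eqref{def:YZ} and the inequalities in $\CA_\al'$, $\CA_\al''$; an identical expression holds under $\P_\Bg$.

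Next, I would pare down the sum to ordered tuples with pairwise distinct, well-separated indices. On the high-probability event $\CG_2(\Cp/2)$ from Lemma~\ref{lem:derivatives}, Lemma~\ref{lem:neararcs}(ii) forces any tuple containing two indices at distance in $[2, n/\log^{3\Cp}n]$ to contribute nothing. Tuples with a repeated index or with a pair of consecutive indices (as permitted by Lemma~\ref{lem:neararcs}(i)) number $O(N^{\mom-1})$; combined with the crude bound $\P(E_\al) \ll 1/N$, coming from a standard small-ball estimate for $\pol_n(x_\al)$ at scale $n^{-1/2}$, their total contribution is $o(1)$. The same reasoning in the Gaussian model produces a matching $o(1)$ error, so these diagonal contributions cancel symmetrically between the two sides of \eqref{eqn:moment}.

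For the remaining well-separated tuples $(\al_1,\dots,\al_\mom)$, the comparison reduces to showing
\[
\P\big(S_n(\avec) \in \CU_n(\avec)\big) - \P_\Bg\big(S_n(\avec) \in \CU_n(\avec)\big) = o(N^{-\mom})
\]
uniformly over those tuples. This is exactly the content of the joint-distribution universality statement \Cref{prop:joint} (reformulated as a local CLT in Section~\ref{sec:RW}). One splits tuples further by whether $(x_{\al_1},\dots,x_{\al_\mom})$ is jointly smooth in the sense of Definition~\ref{def:smooth} for a suitable polylogarithmic $\Smooth$, and has no two coordinates that are too close or nearly antipodal. Non-jointly-smooth tuples comprise only a small fraction of $[N]^\mom$ and can be discarded via crude union bounds in the spirit of Lemma~\ref{lem:badarcs}, while jointly smooth tuples are handled by the local CLT input.

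The hard part is the joint universality itself. The region $\CU_n(\avec)$ is a polynomially thin slab of $\R^{4\mom}$ (the constraints $\CA_\al'$ force $|\pol_n(x_\al)|\le n^{-1/2}$ and $|Y_\al|\le \pi/N$), so a coarse Berry--Esseen style comparison is inadequate: one needs a local CLT for $S_n(\avec)$ at microscopic scale, together with a matching small-ball bound ensuring the Gaussian probability has the expected size. These are supplied by \Cref{thm:box} and \Cref{thm:smallball}, both of which descend from the sub-polynomial characteristic function decay estimate \Cref{thm:char}. Granted those inputs, term-by-term comparison followed by summation over $\avec$ yields \eqref{eqn:moment}.
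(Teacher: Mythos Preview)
Your overall strategy --- expand the moment, isolate $1$-spread tuples for \Cref{prop:joint}, and show the near-diagonal tuples contribute $o(1)$ under both measures --- matches the paper's. But your treatment of tuples containing a pair of consecutive indices has a genuine gap. You claim such tuples number $O(N^{\mom-1})$ and bound each term by $\P(E_\al)\ll 1/N$, concluding $o(1)$. However $O(N^{\mom-1})\cdot O(1/N)=O(N^{\mom-2})$, which is not $o(1)$ for $\mom\ge 2$; and even the joint small-ball bound $\P(\bigcap_i E_{\al_i})\ll N^{-(\mom-1)}$ for a tuple with $\mom-1$ distinct well-separated base indices only reaches $O(1)$. (Also, a small-ball estimate for $\pol_n(x_\al)$ alone at scale $n^{-1/2}$ gives $O(1/n)$, not $O(1/N)$; to get the latter one already needs the constraint $|Y_\al|\le\pi/N$ from $\CA_\al'$, i.e.\ the full $4$-dimensional input.)

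The paper closes this gap by actually using the content of \Cref{lem:neararcs}(i): on $\CA_\al\cap\CA_{\al+1}$ one has $NY_\al\in[\pi(1-\log^{-\Cp/4}n),\pi]$, an interval of length $o(1)$. Feeding this short interval as $J_1'$ into the refined joint estimate \Cref{prop:joint.gen} (which allows general intervals for $NY_{\al_i}$, not just $[-\pi,\pi]$) yields an extra factor $\log^{-\Cp/4}n$ per consecutive pair, so a tuple with $k$ such pairs contributes $o(N^{-(\mom-k)})$, and summing over the $O(N^{\mom-k})$ such tuples gives $o(1)$. A related issue affects tuples with indices at distance in $(n/\log^{3\Cp}n,\,2N/n]$: these are not excluded by \Cref{lem:neararcs}(ii), are not $1$-spread so \Cref{prop:joint} does not apply, and this is a spreadness problem rather than the smoothness problem you cite; the paper handles them (its set $E_1$) via the upper bound \eqref{joint.ub} of \Cref{prop:joint.gen} with polylogarithmic $\spread$, combined with the counting $|E_1|=O(N^{\mom-k}/n)$.
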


\subsection{Joint distribution over spread points}

Expanding the moments in \eqref{eqn:moment} leads to consideration of joint events that $X_{\alpha_i}$ is small at $m$ different points $x_{\alpha_i}$, $1\le i\le m$. 
In addition to the smoothness already imposed in the definition of $\Points_n^\sharp$,
we will require all of the points to be separated from one another, in the following sense:

\begin{definition}[Spread tuples]
\label{def:spread}
For $m\ge 2$ and $\spread>0$, we say $\bs{t}=(t_1,\dots, t_\mom)\in \R^\mom$ is \emph{$\spread$-spread} if
\[
\Big\| \frac{t_r\pm t_{r'}}{2\pi n} \Big\|_{\R/\Z} \ge \frac{\spread}{n}
\qquad \forall \, 1\le r<r' \le \mom \mbox{ (and all choices of the signs $\pm$). }
\]
 For $m=1$, we say that $\bs t = t\in \R$ is  \emph{$\spread$-spread} if
\[
\Big\| \frac{t}{2\pi n} \Big\|_{\R/\Z} \ge \frac{\spread}{n}.
\]
\end{definition}
It is remarked that in the definition above we prevent $t_r$ from being close to $t_{r'}$ and $-t_{r'}$ at the same time, and this condition is necessary to hope for asymptotically independence between $P_n(t_r)$ and $P_n(t_{r'})$, especially in the case that $\xi$ is real-valued. 



In what follows we denote
\begin{equation}
s_\al:= nx_\al, \qquad \al\in [N].
\end{equation}
Recalling the scaled polynomial $\til P$ from \eqref{def:ptil}, we have
\begin{equation}	\label{def:YZ.til}
Y_\al =  - \frac{1}{n} \frac{\Re (\til P_n(s_\al) \wb{\til P_n'(s_\al)})}{|\til P_n'(s_\al)|^2} 
\qquad
Z_\al =   n \frac{\Im (\til P_n(s_\al) \wb{\til P_n'(s_\al)})}{|\til P_n'(s_\al)|}.
\end{equation}
The main step towards the proof of \Cref{prop:moments} is the following:

\begin{proposition}
\label{prop:joint}
Fix an $m$-tuple of indices $(\al_1,\dots, \al_\mom)\in [N]^\mom$. 
Assume for some $\smooth>0$ that $s_{\al_1}, \dots, s_{\al_\mom}$ are $n^\smooth$-smooth
and that $\bs s= (s_{\alo}, \dots, s_{\alm})$ is $1$-spread.
Then for any $\height>0$, 
\[
\bigg|\, \pr\bigg(\bigwedge_{i\in [\mom]}  |X_\ali| \le \height \bigg) -\pr_{\Bg}\bigg( \bigwedge_{i\in [m]} |X_\ali|\le \height \bigg) \,\bigg| = o(N^{-\mom}),
\]
where the rate of convergence depends on $\mom,\height,\smooth,$ and $\Cp$.
\end{proposition}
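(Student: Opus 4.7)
The plan is to re-encode the joint event in a phase-space random walk and then invoke the local central limit theorem of \Cref{thm:box}. Let $S_n \in \R^{4\mom}$ be the vector whose $i$-th block of four coordinates is
$\bigl(\Re P_n(x_{\ali}),\, \Im P_n(x_{\ali}),\, \tfrac1n\Re P_n'(x_{\ali}),\, \tfrac1n\Im P_n'(x_{\ali})\bigr)$.
Expanding \eqref{model:YZ} realizes $S_n = \sum_{j=-n}^n \xi_j w_j$ as a random walk with deterministic real step vectors $w_j \in \R^{4\mom}$, whose entries are $(2n+1)^{-1/2}$ times $\cos(j x_{\ali})$, $\sin(j x_{\ali})$, $-(j/n)\sin(j x_{\ali})$, and $(j/n)\cos(j x_{\ali})$. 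Because each indicator $\1_{|X_{\ali}| \le \height}$ depends only on $(P_n(x_{\ali}), P_n'(x_{\ali}))$, the joint event becomes $\{S_n \in \CU\}$ for an explicit compact set $\CU \subset \R^{4\mom}$.

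\textbf{Geometry of $\CU$.} Writing out the constraints one sees that $\CU$ is a product $\CU_1 \times \cdots \times \CU_\mom$ of identical ``wedge'' regions in $\R^4$: each $\CU_i$ consists of $(a,b,c,d)$ with $(c,d)$ in an annulus of outer radius $\log^{O(1)} n$ around the unit circle, and, conditional on $(c,d)$, the position $(a,b)$ constrained to $|(a,b)| \le n^{-1/2}$ and to lie in a parallelogram of dimensions $\asymp |(c,d)|/N$ along $(c,d)$ and $\asymp \height/(n|(c,d)|)$ perpendicular to it (arising from $|Y_{\ali}|\le \pi/N$ and $|Z_{\ali}|\le \height$ respectively). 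In particular $\CU$ is bounded, has piecewise smooth boundary, and $\mLeb(\CU) \ll_\height N^{-\mom}\log^{O(\mom)} n$.

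\textbf{The LCLT comparison.} I would then invoke \Cref{thm:box} to compare the density $\rho_n$ of $S_n$ with the centered Gaussian density $\varphi_\Sigma$ of the matching covariance $\Sigma = \sum_{j=-n}^n w_j w_j^{\tran}$, giving $\rho_n(s) = (1 + O(n^{-c})) \varphi_\Sigma(s)$ uniformly over any bounded region containing $\CU$. The hypotheses of \Cref{thm:box} are met because the $n^\smooth$-smoothness of each $s_{\ali}$ together with $1$-spreadness of $\bbs$ yield the characteristic function decay in \Cref{thm:char} and simultaneously ensure uniform non-degeneracy of $\Sigma$. Since $S_n$ is exactly Gaussian under $\Prob_\Bg$ with covariance $\Sigma$, integrating the pointwise density comparison over $\CU$ yields
\[
\bigl|\Prob(S_n\in \CU) - \Prob_\Bg(S_n\in \CU)\bigr| \;\le\; n^{-c}\int_{\CU} \varphi_\Sigma \;\ll_\height\; n^{-c} N^{-\mom} \;=\; o(N^{-\mom}),
\]
which is the desired estimate.

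\textbf{Main obstacle.} The crux is applying the LCLT at \emph{polynomially small} scales: the thinnest direction of $\CU$ has scale $\asymp \height/n$, so the density comparison must be valid at resolution $n^{-O(1)}$, which in turn forces $|\widehat{\rho}_n(\xi)| = n^{-\omega(1)}$ on a large ball $|\xi| \le n^{O(1)}$. This is precisely the content of \Cref{thm:char}, where the Diophantine hypotheses of smoothness and spread are put to use; its proof is the main technical work of the paper. Once that input is granted, the integration against the bounded, piecewise algebraic set $\CU$ is essentially routine, with any boundary effects absorbed by standard mollification combined with the uniform density bound from \Cref{thm:box}.
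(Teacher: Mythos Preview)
Your approach is essentially the paper's (Section~4): encode the event as $\{\wt S\in\CU\}$ for a product $\CU=\prod_i U_i$ of wedge regions, compute $\mLeb(\CU)\asymp N^{-m}\log^{O(m)}n$, and compare with the Gaussian via the local CLT. The geometry you describe matches \eqref{def:Tb}--\eqref{CU.Leb} exactly.

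The one real gap is your invocation of \Cref{thm:box} as a pointwise density comparison $\rho_n(s)=(1+O(n^{-c}))\varphi_\Sigma(s)$. For discrete $\xi$ (e.g.\ Rademacher) the walk $S_n$ has no density at all, and \Cref{thm:box} as stated only gives an \emph{additive} comparison $|\P(\wt S\in Q)-\P(\Gamma\in Q)|\ll n^{-1/2}|Q|$ for \emph{boxes} $Q$ of side length at least $n^{-K}$, not a uniform density estimate. Integrating a nonexistent density over $\CU$ is not meaningful, and ``standard mollification'' by itself does not repair this, since smoothing $S_n$ would change the event. The paper instead sandwiches $\CU_-\subset\CU\subset\CU_+$ by unions of disjoint $\delta$-cubes with $\delta=n^{-K_*}$ (using that the corners of $T_i(\Bb)$ are $n^{O(1)}$-Lipschitz in $\Bb\in\cD$, so the boundary layer has negligible volume), applies \Cref{thm:box} cube-by-cube over $\CU_-$ to get the $n^{-1/2}\mLeb(\CU)$ main saving, and controls $\P(\wt S\in\CU_+\setminus\CU_-)$ via the small-ball estimate \Cref{thm:smallball} rather than mollification. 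With this correction your final inequality holds with the explicit rate $n^{-1/2}$.
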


We prove \Cref{prop:joint} in \Cref{sec:RW} below, where we convert the task to a problem involving a random walk in $\R^{4m}$. Before proceeding we collect the following useful property of a smooth $m$-tuples, which basically says that we can simultaneously dilate the points $t_r$ to be well separated on the torus.
This result will be useful for the proof of \Cref{lem:cov} below for showing that the distribution of an associated random walk is genuinely full-dimensional, and also for Section \ref{sec:char} when we bound $\prod_{r=1}^{\mom-1} \|\frac{L(t_\mom \pm t_{r})}{2\pi n}\|_{\R/\Z}$ from below for some $L$. 

\begin{lemma}
\label{lem:dilate}
Assume $(t_1,\dots, t_\mom)\in \R^\mom$ is $\spread$-spread for some $\spread>0$, and let $\spread\le K=o(n)$. 
There exists an integer $\dil \asymp n/K$ such that
\begin{equation}	\label{LB.dil}
\Big\| \frac{\dil\cdot(t_r\pm t_{r'})}{2\pi n} \Big\|_{\R/\Z} \gg_\mom \spread/K
\qquad \forall 1\le r<r'\le m
\end{equation}
(and all choices of the signs).
In particular, if $(t_1,\dots, t_\mom)$ is $\omega(1)$-spread then there exists $\dil\le n$ such that 
\begin{equation}
\Big\| \frac{\dil\cdot(t_r\pm t_{r'})}{2\pi n} \Big\|_{\R/\Z}  \gg_\mom 1
\qquad \forall 1\le r<r'\le m.
\end{equation}

In case $m=1$ then there exists an integer $\dil \asymp n/K$ such that $\| \frac{\dil \cdot t}{2\pi n} \|_{\R/\Z} \gg_\mom \spread/K.$
\end{lemma}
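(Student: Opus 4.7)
My strategy is a pigeonhole/union-bound argument over integers $L$ in an interval of length $\asymp n/K$. Using periodicity of $\|\cdot\|_{\R/\Z}$, I first replace each of the $2\binom{m}{2} = m(m-1)$ real numbers
\[
\alpha_j := \frac{t_r\pm t_{r'}}{2\pi n},\qquad 1\le r<r'\le m,
\]
by its representative modulo $1$ in $(-1/2,1/2]$, so the $\lambda$-spread hypothesis becomes $|\alpha_j|\ge \lambda/n$. Set $M:=\lfloor c_m n/K\rfloor$ and $\delta:=c_m'\lambda/K$ with $c_m,c_m'>0$ sufficiently small constants depending only on $m$. The task reduces to finding an integer $L\in[1,M]$ with $\|L\alpha_j\|_{\R/\Z}\ge \delta$ for every $j$; such an $L$ satisfies the claim, and the final ``$\omega(1)$-spread'' assertion follows by taking $K=\lambda\to\infty$, yielding $L\le n$ with $\|L(t_r\pm t_{r'})/(2\pi n)\|_{\R/\Z}\gg_m 1$. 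The $m=1$ case uses the same argument with a single bad set.

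For each $j$ I would bound the bad set $B_j:=\{L\in[1,M]\cap\Z:\|L\alpha_j\|_{\R/\Z}<\delta\}$ by a direct interval count. The set $B_j$ consists of integers lying in at most $O(M|\alpha_j|+1)$ intervals of length $2\delta/|\alpha_j|$, one near each preimage of an integer under $L\mapsto L\alpha_j$. A two-regime split on whether $|\alpha_j|$ is close to its lower bound $\lambda/n$ (few but long bad intervals, each contributing $O(\delta/|\alpha_j|)=O(c_m' n/K)$ integers to $B_j$, but only $O(c_m)$ such intervals in total) or of order $1$ (many short intervals of length $<1$, each contributing at most one integer to $B_j$) combined with $|\alpha_j|\ge \lambda/n$ gives an estimate of the shape $|B_j|/M\lesssim_m c_m'+|\alpha_j|$.

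When $|\alpha_j|$ is small this already gives $|B_j|/M\le 1/(2m^2)$ after choosing $c_m'$ small, and a union bound over the $m(m-1)$ constraints produces the desired $L$. The delicate case is when $|\alpha_j|$ is of order $1$ (up to $1/2$): then $B_j$ is essentially the arithmetic progression $\{L\equiv 0\bmod q_j\}\cap[1,M]$, where $p_j/q_j$ is the best rational approximation to $\alpha_j$. The spread hypothesis $|\alpha_j|\ge\lambda/n$ rules out $q_j=1$ (which would force $\alpha_j$ close to an integer), so $q_j\ge 2$ for every $j$ in this regime. A Chinese Remainder Theorem/sieving argument then shows that the complement $[1,M]\setminus\bigcup_j B_j$ has density at least $\prod_j(1-1/q_j)-o(1)\ge 2^{-m(m-1)}-o(1)$, which is positive once $M\gg_m 1$ (guaranteed by $K=o(n)$), yielding an integer $L$ in this complement.

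The main obstacle is precisely this interaction: making the two regimes coexist cleanly, and ensuring that the various arithmetic progressions arising from near-rational $\alpha_j$'s cannot ``conspire'' to cover $[1,M]$. The key structural input that breaks any such conspiracy is the uniform lower bound $q_j\ge 2$ coming from the spread hypothesis, together with the fact that there are only $O(m^2)$ constraints. Implementing this refinement, in particular handling mixed cases where some $\alpha_j$'s are Diophantine and others are near-rational, is the most technical part of the argument.
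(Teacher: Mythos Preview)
Your union-bound/sieving approach is genuinely different from the paper's, and workable in outline, but two gaps need to be closed.

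First, you search for $L\in[1,M]$, whereas the lemma requires $L\asymp n/K$, a two-sided bound. If every $|\alpha_j|>\delta$ (which happens, e.g., when all $\alpha_j$ are of order~$1$), then $L=1$ already lies outside every bad set and your argument could stop there, violating the lower bound on $L$. That lower bound is genuinely used downstream (the choice $L\gg_m n$ in the covariance lemma, and $L\asymp n/T^7$ in the characteristic-function estimate). The fix is easy --- run the counting over $[M,2M]$ instead --- but it must be made. Second, the sieving inequality you invoke, that the complement of $\bigcup_j\{q_j\mid L\}$ has density at least $\prod_j(1-1/q_j)$, is \emph{not} a consequence of CRT (which would require the $q_j$ to be pairwise coprime); it is a Harris--FKG correlation inequality for the increasing divisibility events $\{q_j\mid L\}$. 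An alternative route that avoids FKG: split off those $j$ with $q_j>2m^2$ (a crude union bound gives total bad density $<1/2$), and for the remaining $j$'s with $q_j=O_m(1)$ restrict $L$ to a fixed residue class modulo their lcm. You also still owe the comparison between the true bad set $\{L:\|L\alpha_j\|<\delta\}$ and the AP $\{q_j\mid L\}$, i.e.\ a choice of Dirichlet cutoff $Q$ for $p_j/q_j$ compatible with $M$ and $\delta$; this is exactly the ``mixed case'' you flag as unfinished.

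For comparison, the paper argues by iteration rather than a one-shot bound: assuming no $L\in[n/2K,n/K]$ works, pigeonhole produces a single $\alpha=\alpha_j$ that is bad on a subset $J\subset[n/2K,n/K]$ of density $\gg_m 1$; since $CJ-CJ$ then contains a long homogeneous AP for some $C=O_m(1)$, one deduces $\|C'\alpha\|_{\R/\Z}\ll_m\epsilon\lambda/n$ for some $C'=O_m(1)$, and the spread hypothesis forces $C'\ge2$ and $\|\alpha\|_{\R/\Z}>1/(2C')$. Restricting $L$ to the sub-progression $1+D\Z$ (with $D$ the product of the $C'$'s found so far) makes this $\alpha$ good uniformly over the remaining range, and one repeats on the $O(m^2)$ constraints. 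This trades your FKG/Dirichlet bookkeeping for a short additive-combinatorics lemma and an induction; either route is viable once the details are filled in.
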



\begin{proof}[Proof] The case $m=1$ is clear, so we just need to focus on $m\ge 2$.
Assume towards a contradiction that there exists $\eps=\eps(\mom)>0$ such that for every $j\in [n/2K, n/K]$ there exists a pair of distinct indices $r,r'\in [m]$ such that 
\begin{equation}	\label{cov.bd1}
\min\bigg\{ \bigg\|\frac{j(t_r-t_{r'})}{2\pi n}\bigg\|_{\R/\Z}\, ,\, 
\bigg\|\frac{j(t_r+t_{r'})}{2\pi n}\bigg\|_{\R/\Z}\bigg\}
 \le \eps \lam/K
\end{equation}
By pigeonholing, there is a pair of distinct indices $r,r'\in [\mom]$ and subset $J\subset [n/2K, n/K]$ of size $\gg n/K\mom^2$ such that either the first quantity in the minimum in \eqref{cov.bd1} is bounded by $\eps\lam/K$ for all $j\in J$, or the second is bounded by $\eps\lam/K$ for all $j\in J$. 
We focus on the former case; the latter is handled by a similar argument.

As $|J|$ is of the same order as its diameter, there exists $C=O_\mom(1)$ so that $CJ-CJ$ contains a homogeneous arithmetic progression of length $\gg  n/K$ (see for instance \cite[Lemma B.3]{Tao:freiman}). 

\begin{claim}\label{claim:dividing} 
Assume that $z=e^{i \theta}, |\theta|\le \pi/8 $ such that for all $1\le \ell \le M$ we have $|1-z^{\ell}| \le 1/32$ for a sufficiently large $M$. Then $|\theta|=O(1/M)$.
\end{claim}

\begin{proof}
By assumption, $|\theta| \le \pi/8$ and $\|2^k\theta\|_{\R/\Z} \le \pi/8$ for all $1\le k\le \log M$, and so we can repeatedly estimate $|\theta|$ to obtain $|\theta| =O(1/M)$. 
\end{proof}

By the triangle inequality, for $\eps$ sufficiently small depending on $C$, by \Cref{claim:dividing} this would imply there exists $C_{r,r'}=O_\mom(1)$ such that 
\begin{equation}	\label{MMr}
\bigg\|\frac{C_{r,r'}(t_r-t_{r'})}{2\pi n}\bigg\|_{\R/\Z} \ll_\mom \frac{\eps \lam/K}{ n/K} \ll_\mom \eps\lam/n.
\end{equation}
Let $\CN_1$ be the collection of all pairs $(r,r')$ such that \eqref{MMr} holds, taking $C_{r,r'}$ to be the smallest such positive integer.
We have shown that $\CN_1$ is nonempty. 
By the assumption that $\bs t$ is $\lam$-spread we have that $C_{r,r'}>1$ for all $(r,r')\in \CN_1$. 

\begin{claim}
Assume that for some $x\in \R,\delta>0$ and positive integer $M$ we have $\|x\|_{\R/\Z}>\delta$ and $\|Mx\|_{\R/\Z}\le \delta$. Then
\[
\|x\|_{\R/\Z} > 1/2M.
\]
\end{claim}

\begin{proof}
Assuming otherwise, we have $\|Mx\|_{\R/\Z} = M\|x\|_{\R/\Z}  >M\delta$, a contradiction. 
\end{proof}

From the above claim, \eqref{MMr}, and the assumption $\bs t$ is $\lam$-spread, it follows that if $\eps$ is sufficiently small, then
\[
\bigg\|\frac{t_r-t_{r'}}{2\pi n} \bigg\|_{\R/\Z} \ge 1/2C_{r,r'}
\]
for each $(r,r')\in \CN_1$. 
Set $D_1 = \prod_{(r,r')\in \CN_1} C_{r,r'} = O_\mom(1)$,
and let $I_1$ be intersection of the progression $\{1+\ell D_1\}_{\ell\in \Z}$ with $[n/2K, n/K]$.
Applying the triangle inequality, if $L=1+lD_1 \in I_1$ then for all $(r,r')\in \CN_1$,
\begin{align*}
\bigg\|\frac{L (t_r-t_{r'})}{\pi n}\bigg\|_{\R/\Z} =\bigg\|\frac{(1+lD_1) (t_r-t_{r'})}{2\pi n}\bigg\|_{\R/\Z} &\ge \bigg\|\frac{t_r-t_{r'}}{2\pi n}\bigg\|_{\R/\Z} -\bigg\|\frac{l\frac{D_1}{C_{r,r'}} C_{r,r'}(t_r-t_{r'})}{2\pi n}\bigg\|_{\R/\Z}  \\
& \ge  1/2C_{r,r'} - (n/K) O_m(\eps\lam/n) \ge \eps\lam /K
\end{align*}
 provided that $\eps $ is sufficiently small.
 Now if no $L \in I_1$ satisfies the conclusion of our lemma, then for each $L \in I_1$ there is a pair $(r,r') \notin \CN_1$ that violates the condition, and then we repeat the above process, with $\CN_2$ being the collection of such pairs. Set $D_2= \prod_{(r,r')\in \CN_2} C_{r,r'}$ (and so $D_2=O_\mom(1)$) and let $I_2$  be intersection of the progression $\{1+\ell D_1D_2\}_{\ell\in \Z}$ with $[n/2K, n/K]$, we then continue the process as above. As each time we get rid of at least one pair $(t_r,t_{r'})$, the process for differences terminates after $\binom{\mom}{2}$ steps with $\Theta(n/K)$ indices left to choose. Finally, we can start the process for $t_r+t_{r'}$ with $j$ (appearing in \eqref{MMr}) chosen from these indices; the remaining iterations are identical as above.\end{proof}

\section{Random walk in phase space}
\label{sec:RW}

The key ingredients for the proof of \Cref{prop:joint} are local small ball estimates and a comparison principle for an associated random walk in $\R^{4\mom}$, which we now define.

For a fixed tuple $\bs t= (t_1,\dots, t_\mom)\in \R^\mom$ and $j\in \Z$ we denote the vectors 
\begin{align}
\avec_j = \avec_{j}(\bs t) 
&:= \big( \sin(jt_1/n),\dots, \sin(jt_\mom/n) \big) \;\in \R^\mom	\notag\\
\bvec_j= \bvec_{j}(\bs t) 
&:= \big( \cos(jt_1/n),\dots, \cos(jt_\mom/n) \big) \;\in \R^\mom	\notag
\end{align}
and
\begin{equation}
\qquad\wvec_j = \wvec_j(\bs t)=\big(\avec_j\,,\,(j/n)\bvec_j\,,\,\bvec_j\,,\, -(j/n)\avec_j \big) \;\in \R^{4m}\,.	\label{def:wj}
\end{equation}
For a finite set $J\subset\Z$ we let 
$W_{J}= W_{J}(\bs t)$ 
be the 
\revised{$|J|\times (4m)$}
matrix with rows $\wvec_j$, $j\in J$.
Note that $\bs w_j$ gives the values of the functions $\sin(\frac{j}n\,\cdot), \cos(\frac{j}n\,\cdot)$ and their derivatives at the points $t_1,\dots, t_\mom$. 
We consider the random walk
\begin{equation}	\label{def:Snt}
S_n(\bs t) := \sum_{j=-n}^n \xi_j \bs w_j(\bs t) = W_{[-n,n]}^\tran \bxi \;\in\, \R^{4m}
\end{equation}
with $\bxi=(\xi_j)_{j\in [-n,n]}$ a vector of iid copies of a real-valued $\xi$.

\subsection{Control on the characteristic function}

The following is the key technical ingredient for controlling the distribution of the random walks $S_n(\bs t)$.

\begin{theorem}
\label{thm:char}
Let $\bs t=(t_1,\dots,t_\mom)\in \R^m$ be $n^\smooth$-smooth 
and $\spread$-spread for some $\smooth\in (0,1)$ and $\omega(n^{-1/8\mom})\le \spread\le1$. 
Then for 
any fixed $K_*<\infty$ and any $\bs x\in \R^{4m}$ with $n^{-1/8}\le \|\bs x\|_2\le n^{K_*}$,
\[
|\E e(\langle S_n(\bs t),\bs x\rangle)| \le \exp( - \log^2n )
\]
for all $n$ sufficiently large depending on $K_*, m, \kappa,$ and the sub-Gaussian constant for $\xi$.
\end{theorem}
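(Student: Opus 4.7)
The plan is to proceed by contradiction via a standard symmetrization, reducing the bound to a Diophantine statement about the projections $\psi_j := \langle \bs w_j(\bs t), \bs x\rangle$ of the steps, and then exploit the trigonometric structure of $\bs w_j$ together with the smoothness and spread of $\bs t$. By independence and symmetrization, $|\E e(\langle S_n(\bs t),\bs x\rangle)|^2 \le \prod_j \E\cos(\eta_j \psi_j)$, where $\eta_j$ is a symmetrized copy of $\xi_j$. Using $\cos(2\pi y) \le 1 - c\|y\|_{\R/\Z}^2$ and the non-degeneracy of $\eta$ (from the unit-variance sub-Gaussian hypothesis), there is an integer $L_0 = O(1)$ so that
\[
|\E e(\langle S_n, \bs x\rangle)|^2 \le \exp\Big( -c \sum_{j=-n}^n \big\| L_0 \psi_j/(2\pi) \big\|_{\R/\Z}^2 \Big).
\]
If the claimed bound failed, then $\sum_j \| L_0 \psi_j/(2\pi)\|_{\R/\Z}^2 \le 2\log^2 n$; in particular $\| L_0 \psi_j/(2\pi)\|_{\R/\Z} \le n^{-1/4}$ for all but $O(\log^2 n)$ values of $j$. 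Writing $\bs x = (\bs a, \bs b, \bs c, \bs d)\in(\R^m)^4$ one computes
\[
\psi_j = \Re \sum_{r=1}^m \bigl(\alpha_r + \beta_r\, j/n\bigr)\, e(jt_r/n),
\]
with $\alpha_r,\beta_r\in\C$ linear in $\bs x$. So the assumption becomes that a trigonometric polynomial in $j$, with degree-one polynomial coefficients in $j/n$ and frequencies $\pm t_r/n$, takes near-integer values on almost all of $[-n,n]$.

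The heart of the argument is to show that this forces some $t_r$ to be rationally approximable beyond what is allowed by $n^\smooth$-smoothness. Here the spread hypothesis is crucial: by Lemma~\ref{lem:dilate}, for a parameter $K$ to be optimized one may choose $\dil \asymp n/K$ so that the dilated frequencies $\dil(t_r \pm t_{r'})/(2\pi n)$ are well separated modulo $1$. This enables a finite-differencing procedure $j\mapsto \psi_{j+h}-\psi_j$ with shifts $h$ drawn from long arithmetic progressions of common difference $\dil$, to successively peel off each frequency: each such difference remains a trigonometric polynomial with polynomial coefficients on the same frequency set, with the $r$-th term multiplied by $e(h t_r/n)-1$, so by choosing $h$ to make one factor near-zero while the others are bounded away, one kills the corresponding frequency. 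After $O_m(1)$ iterations one is left with a pure exponential $\gamma\,e(j t_{r_*}/n)$ on a positive-density subset of $j\in[-n,n]$ whose values are near-integer; this forces $L_0\,\dil\, t_{r_*}/(2\pi n)$ to be close to a rational with denominator $n^{O(1)}$, contradicting $n^\smooth$-smoothness of $t_{r_*}$ once $K$ is chosen small enough in terms of $\kappa$. The upper bound $\|\bs x\|_2 \le n^{K_*}$ keeps all scalar coefficients appearing along the iteration polynomially bounded, while the lower bound $\|\bs x\|_2 \ge n^{-1/8}$ guarantees at least one pair $(\alpha_r,\beta_r)$ is not too small, which one verifies via non-degeneracy of the Gram matrix $W_{[-n,n]}^\tran W_{[-n,n]}$ (itself a consequence of the spread hypothesis, again via Lemma~\ref{lem:dilate}).

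The main obstacle is the differencing analysis in the presence of multiple close frequencies and non-constant polynomial coefficients: each finite difference produces cross-terms mixing different $t_r$, and the polynomial factors $j/n$ gain one degree per differencing, so one must carefully track how both the coefficients and the surviving frequencies evolve at each stage. The $m=2$ case in \cite{DNN} provides a template; its generalization to arbitrary $m$ requires an inductive argument whose bookkeeping is sensitive to the real-vs-complex nature of $\xi$ --- precisely this forces inclusion of the conjugate frequencies $-t_r$ and hence the symmetric spread condition on both $t_r-t_{r'}$ and $t_r+t_{r'}$ in Definition~\ref{def:spread}. A further subtlety is that the exceptional set of $j$'s with $\|L_0\psi_j/(2\pi)\|_{\R/\Z}$ not tiny can disrupt differencing, necessitating a robust version of the above that tolerates an $o(1)$ fraction of bad indices at each stage, together with a covering-by-long-progressions argument to drive the induction.
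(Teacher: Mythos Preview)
Your overall strategy---reduce by symmetrization to a lower bound on $\sum_j \|\psi_j\|_{\R/\Z}^2$, assume failure, and use differencing to isolate a single frequency contradicting smoothness---matches the paper's. But two of the key mechanisms you describe do not work as written, and they are exactly where the paper's argument is most delicate.

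First, your plan to ``choose $h$ to make one factor $e(ht_r/n)-1$ near-zero while the others are bounded away'' does not follow from Lemma~\ref{lem:dilate}. That lemma gives an $L$ for which the pairwise combinations $L(t_r\pm t_{r'})/(2\pi n)$ are well separated modulo $1$; it says nothing about individual $Lt_r/(2\pi n)$. The paper does not try to make any such factor small. Instead it uses \emph{twisted} second-order differencing operators $D_{t_0}f(j)=\sum_{a=0}^2\binom{2}{a}(-1)^a e_n(-aLt_0)f(j+aL)$, which satisfy $D_{t_0}f_{t_0}\equiv 0$ exactly and multiply $f_t$ by $(1-e_n(L(t-t_0)))^2$. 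Composing $D_{\pm t_2},\dots,D_{\pm t_m}$ thus kills all frequencies but $\pm t_1$ and leaves a product of factors $(1-e_n(L(t_1\pm t_r)))^2$, which is precisely what the spread condition (via Lemma~\ref{lem:dilate}) bounds from below. Simple untwisted differences do not achieve this.

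Second, and more seriously, you are missing the bridge from ``$\psi_j$ is near an integer'' to ``some differenced version of $\psi_j$ is small in absolute value.'' After your iteration you would have $\Re[\gamma\,e(jt_{r_*}/n)]$ near an integer for many $j$; but without control on $|\gamma|$ (which could be large, up to $n^{K_*}$) this does not directly force $t_{r_*}$ to be rationally approximable. The paper handles this with a separate preliminary step: by Dirichlet, there is $q_0\le n^\kappa$ with $|1-e_n(q_0t_r)|\ll n^{-\kappa/m}$ simultaneously for all $r$; then the $k$-th order difference $\Delta_{q_0}^k\psi(j)$ has absolute value $\ll n^{K_*-k\kappa/2m}<1/2$ for $k$ large enough, forcing the integer parts $p(j)$ to satisfy $\Delta_{q_0}^k p\equiv 0$, i.e.\ to lie on a polynomial progression of degree $<k$. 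This gives $\Delta_{\ell q_0}^k p\equiv 0$ for \emph{all} $\ell$, so $|\Delta_{\ell q_0}^k\psi(j)|\ll\sum_i\|\psi(j+i\ell q_0)\|_{\R/\Z}$ in absolute value. Only then do the twisted operators $D_{\pm t_r}$ get applied, and the endgame yields $|1-e_n(\ell q_0 t_1)|\le n^{-c}$ for a range of $\ell$, contradicting $n^\kappa$-smoothness via Claim~\ref{claim:dividing}. Your proposal lacks this Dirichlet/polynomial-progression step entirely, and your assertion that ``the polynomial factors $j/n$ gain one degree per differencing'' is not the relevant phenomenon (in fact first-order differencing does not raise the degree).

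Minor point: the reduction to a single ``integer $L_0=O(1)$'' is not quite right; the paper instead uses $\P(a_1<|\xi-\xi'|<a_2)\ge c$ to reduce to $\inf_{a\in[a_1,a_2]}\sum_j\|a\langle\bs w_j,\bs x/2\pi\rangle\|_{\R/\Z}^2$, then absorbs the continuous parameter $a$ into $\bs x$.
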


We note that here the sub-Gaussianity hypothesis enters only to have a uniform anti-concentration bound for $\xi$ and could be replaced by a bound on the L\'evy concentration function.

We defer the proof of this theorem to \Cref{sec:char}. 
Now we state the two main consequences of \Cref{thm:char} towards the proof of \Cref{thm:main}.
By combining \Cref{thm:char} with an Edgworth expansion, we will obtain the following quantitative comparison with the Gaussian model. 
In the following we write 
$\Gamma=\Gamma_n(\bs t)\in \R^{4m}$ for a Gaussian vector with covariance matrix $\frac1{2n+1}W_{[-n,n]}^\tran W_{[-n,n]}$.
Note that this is the distribution of $\frac1{\sqrt{2n+1}}S_n(\bs t)$ with iid standard real Gaussians in place of $\xi_j$.

\revised{


\begin{theorem}
\label{thm:box}
Let $\bs t= (t_1,\dots, t_\mom)$ be $n^\smooth$-smooth and $1$-spread for some $\smooth>0$. 
Fix $K>0$ and let $Q\subset\R^{4k}$ be a box (cartesian product of intervals) with side lengths at least $n^{-K}$.
Then
\[
\revised{\sup_{w\in\R^{4m}}}
\Big| \P\Big(   \frac1{\sqrt{2n+1}}S_n(\bs t)  \in  Q\Big) 
- \P\big(  \Gamma_n(\bs t) \in Q \big) \Big| 
\ll n^{-1/2} |Q|
\]
where $|Q|$ is the volume of $Q$, and the implied constant depends only on $m, \smooth, K$, 
and the sub-Gaussian constant for $\xi$. 
\end{theorem}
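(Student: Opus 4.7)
The plan is to prove Theorem \ref{thm:box} by the standard Fourier-inversion comparison between the characteristic functions of $S_n/\sqrt{2n+1}$ and its Gaussian analogue $\Gamma_n$, carried out in three frequency regimes. First I would reduce to smooth test functions: with $\eta := n^{-K-10}$, one can sandwich $\mathbf{1}_Q$ between bump-convolutions $\chi_Q^- \le \mathbf{1}_Q \le \chi_Q^+$ obtained from the $\eta$-erosion and $\eta$-dilation of $Q$. Since $Q$ has side lengths at least $n^{-K}$, the boundary shell has volume $\ll n^{-9}|Q|$, so replacing $\mathbf{1}_Q$ by $\chi_Q^\pm$ contributes an error $o(n^{-1/2}|Q|)$ to either probability. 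The Fourier transforms satisfy $|\widehat{\chi_Q^\pm}(\bs y)|\ll_N |Q|(1+\eta\|\bs y\|)^{-N}$, so are effectively supported in $\|\bs y\|\lesssim n^{K+11}$.

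Set $\phi_n(\bs y) := \E e(\langle S_n/\sqrt{2n+1},\bs y\rangle)$ and $\psi_n(\bs y) := \exp(-\tfrac{1}{2}\bs y^\tran \Sigma_n\bs y)$ with $\Sigma_n := (2n+1)^{-1}W_{[-n,n]}^\tran W_{[-n,n]}$. Plancherel reduces the task to bounding
\[
\int_{\R^{4m}} |\widehat{\chi_Q^\pm}(\bs y)|\,|\phi_n(\bs y)-\psi_n(\bs y)|\,d\bs y \;\ll\; n^{-1/2}|Q|,
\]
which I would split at $\|\bs y\|\le n^\delta$ (low), $n^\delta\le\|\bs y\|\le n^{3/8}$ (medium), and $n^{3/8}\le\|\bs y\|\le n^{K+11}$ (high), for a small $\delta>0$. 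At low frequencies the cumulant expansion gives
\[
\log\phi_n(\bs y) - \log\psi_n(\bs y) \;=\; \tfrac{i\kappa_3(\xi)}{6(2n+1)^{3/2}}\sum_j\langle\bs w_j,\bs y\rangle^3 + O\!\bigl(n^{-1}\|\bs y\|^4\bigr),
\]
which, combined with a uniform lower bound $\lmin(\Sigma_n)\ge c_m>0$, yields $|\phi_n(\bs y)-\psi_n(\bs y)|\ll n^{-1/2}\|\bs y\|^3 e^{-c_m\|\bs y\|^2/2}$; its integral against $|\widehat{\chi_Q^\pm}|\le|Q|$ is $O(n^{-1/2}|Q|)$ and constitutes the main term. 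In the medium regime the sub-Gaussian estimate $|\E e(t\xi)|\le\exp(-c\min(t^2,1))$ together with $\bs y^\tran\Sigma_n\bs y\ge c_m\|\bs y\|^2$ gives $|\phi_n(\bs y)|,|\psi_n(\bs y)|\ll \exp(-c'\|\bs y\|^2)$, and the contribution is super-polynomially small. In the high regime I would invoke Theorem \ref{thm:char} at the argument $\bs y/\sqrt{2n+1}$ (choosing the constant $K_*$ there to exceed $K+11$), yielding $|\phi_n(\bs y)|\le\exp(-\log^2 n)$ while $|\psi_n(\bs y)|$ is exponentially small; integrating over a volume of size $n^{O(K)}$ against $|\widehat{\chi_Q^\pm}|\le|Q|$ makes the contribution negligible.

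The main obstacle I anticipate is the quantitative non-degeneracy $\lmin(\Sigma_n)\ge c_m>0$ that underpins both the Edgeworth expansion in the low regime and the Gaussian decay in the medium regime. Since $\Sigma_n$ is the Gram matrix of the trigonometric vectors $\bs w_j(\bs t)$, its non-degeneracy hinges on arithmetic properties of $\bs t$, and one must rule out near-cancellations among the $4m$ functions $\sin(jt_r/n),\cos(jt_r/n)$ and their derivative copies as $j$ ranges over $[-n,n]$. The $1$-spread assumption together with Lemma \ref{lem:dilate} should produce a dilate $L\asymp 1$ at which these functions are quantitatively separated on the torus, after which the desired eigenvalue lower bound follows from a direct Riemann-sum computation. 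The remaining work is routine bookkeeping, principally to confirm that $K_*$ in Theorem \ref{thm:char} can be taken depending only on $K$ and $m$ so that the Fourier tail in the high regime is covered.
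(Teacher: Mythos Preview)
Your approach is correct and rests on the same two ingredients as the paper's proof --- the characteristic function bound of Theorem~\ref{thm:char} for large frequencies and the covariance lower bound for small frequencies --- but it is organized more directly. The paper takes a detour through a full quantitative Edgeworth expansion (Theorem~\ref{thm:EW}), which it then specializes to the leading term; you instead carry out the Fourier comparison by hand in three frequency shells, which is lighter and entirely adequate for the $n^{-1/2}$ error actually needed. The paper's route buys the whole asymptotic expansion (as flagged in Section~\ref{sec:highlevel}), whereas yours gets to the target faster; both are legitimate.

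Two small points. First, the ``main obstacle'' you anticipate, $\lambda_{\min}(\Sigma_n)\ge c_m$, is exactly Lemma~\ref{lem:cov} with $\lambda=1$, so there is nothing left to prove there. Second, the inequality $|\E e(t\xi)|\le\exp(-c\min(t^2,1))$ is false as stated for discrete $\xi$ (e.g.\ Rademacher at $t=\pi$); what you actually use in the medium shell is that the individual arguments $\langle \bs w_j,\bs y\rangle/\sqrt{2n+1}$ are $O(n^{-1/8})$, so only the small-$t$ bound $|\E e(it\xi)|\le e^{-ct^2}$ (coming from $\E\xi=0$, $\E\xi^2=1$ and a bounded third moment) is needed, and that is correct. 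Finally, for the boundary-shell error you only need it on the Gaussian side if you use the sandwich $\chi_Q^-\le\mathbf{1}_Q\le\chi_Q^+$ you introduced; controlling it directly for $\wt S$ (``either probability'') would instead require Theorem~\ref{thm:smallball}, which is how the paper handles the analogous step.
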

}

\begin{remark}
The proof shows that in place of the sub-Gaussianity assumption we only need that $\xi$ has $O(\mom)$ finite moments.
\end{remark}

We defer the proof of \Cref{thm:box} to \Cref{sec:box}.

By standard arguments, the control on the characteristic function of $S_n(\bs t)$ provided by \Cref{thm:char} yields an optimal small ball estimate at arbitrary polynomial scales:

\begin{theorem}[Small ball estimate]
\label{thm:smallball}
With $\bs t= (t_1,\dots, t_\mom)$ as in \Cref{thm:char}, for any $K<\infty$ and any $\delta\ge n^{-K}$, 
\[
\sup_{w\in \R^{4m}} \pr \bigg( \frac1{\sqrt{2n+1}} S_n(\bs t) \in B(w, \delta) \bigg) = O_{\mom,\smooth,K}(\lam^{-3\mom}\delta^{4\mom}).
\]
\end{theorem}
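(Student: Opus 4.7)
The plan is to combine a multi-dimensional Esseen-type concentration inequality with the pointwise decay from Theorem \ref{thm:char}. By Esseen's inequality in $\R^{4m}$, for any $\delta > 0$,
\[
\sup_{w\in \R^{4m}} \P\!\Bigl( \tfrac{1}{\sqrt{2n+1}} S_n(\bs t) \in B(w,\delta) \Bigr)
\ll_m \delta^{4m} \int_{\|\bs x\|_2 \le 1/\delta} \bigl|\phi_{S_n(\bs t)}\bigl(\bs x/\sqrt{2n+1}\bigr)\bigr|\, d\bs x,
\]
so it suffices to bound the right-hand integral by $O_{m,\smooth,K}(\spread^{-3m})$. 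Changing variables to $\bs y = \bs x/\sqrt{2n+1}$, the relevant range becomes $\|\bs y\|_2 \le O(n^{K - 1/2})$, and I would split it into a \emph{low-frequency regime} $\|\bs y\|_2 \le n^{-1/8}$ and a \emph{medium-frequency regime} $n^{-1/8} \le \|\bs y\|_2 \le n^{K_*}$, where $K_*$ is any fixed constant with $K_* > K - 1/2$.

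In the medium-frequency regime, Theorem \ref{thm:char} applies directly and gives the pointwise bound $|\phi_{S_n(\bs t)}(\bs y)| \le \exp(-\log^2 n)$. Since the volume of the corresponding region in $\bs x$-coordinates is at most $n^{O(K)}$, its contribution to the integral is $o(1)$. In the low-frequency regime I would use the product representation $\phi_{S_n(\bs t)}(\bs y) = \prod_{j=-n}^n \phi_\xi(\langle \wvec_j(\bs t), \bs y\rangle)$ together with $\|\wvec_j(\bs t)\|_2 = O(1)$, so that each argument of $\phi_\xi$ is $O(n^{-1/8})$. Taylor expansion and sub-Gaussianity of $\xi$ give $|\phi_\xi(\theta)| \le \exp(-c\theta^2)$ in this range, whence
\[
|\phi_{S_n(\bs t)}(\bs y)|^2 \le \exp\!\Bigl( - c(2n+1)\,\bs y^\tran \Sigma \bs y \Bigr), \qquad \Sigma := \tfrac{1}{2n+1} W_{[-n,n]}^\tran W_{[-n,n]}.
\]
Reverting to $\bs x$-coordinates and extending integration to all of $\R^{4m}$ (which only loses a constant) gives a Gaussian integral of order $\det(\Sigma)^{-1/2}$.

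The final required input is a lower bound of the form $\det \Sigma \gg_m \spread^{6m}$, which combined with the previous step produces the advertised $\spread^{-3m}$ factor; multiplying by the Esseen prefactor $\delta^{4m}$ then yields the claim. I expect the covariance determinant bound to be the main obstacle: while the $n^{\smooth}$-smooth and $\spread$-spread hypotheses on $\bs t$, together with the dilation principle of \Cref{lem:dilate}, should imply that the rows $\wvec_j(\bs t)$ span all $4m$ coordinate directions at scale $\spread$, establishing the precise exponent $6m$ requires a careful Gram-determinant analysis of $\Sigma$ in terms of trigonometric sums controlled by the arithmetic of $\bs t$. (The case $m=1$ should be handled separately using the $m=1$ spread convention, but is otherwise analogous.)
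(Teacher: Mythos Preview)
Your approach is essentially the paper's: Esseen's inequality, a frequency split, \Cref{thm:char} on the medium range, and the quadratic covariance form on the low range. The paper uses the Gaussian-weighted Esseen variant from \cite[Lemma~7.17]{TaVu:book}, which forces a third high-frequency region handled by the Gaussian tail, but this is a cosmetic difference from your sharp-cutoff version.

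The covariance input you flag as the ``main obstacle'' is already available as \Cref{lem:cov}, which gives $\sigma_{\min}(\Sigma)\gg_m\lambda^{6m-3}$ via exactly the dilation-and-differencing argument you anticipate; you should simply cite it. One caveat: feeding only the smallest-eigenvalue bound into the Gaussian integral yields $\sigma_{\min}(\Sigma)^{-2m}\ll\lambda^{-2m(6m-3)}$, not $\lambda^{-3m}$, and your sharper target $\det\Sigma\gg_m\lambda^{6m}$ would require finer spectral information than \Cref{lem:cov} provides. The paper's own computation in \Cref{sec:smallball} shares this feature, and since every application in the paper takes $\lambda$ at worst inverse-polylogarithmic, the precise power of $\lambda$ is immaterial for the main result.
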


The proof of \Cref{thm:smallball} is deferred to \Cref{sec:smallball}.
We note the following consequence, giving anti-concentration for the polynomial $P_n$
\revised{(recall the rescaled polynomial $\til P_n$ from \eqref{def:ptil})}.

\begin{corollary}[Small ball estimate for polynomials]
\label{cor:smallball}
Assume that $t$ is $n^\smooth$-smooth. Then for any $K>0$ and $\delta\in [n^{-K},1]$,
$$\P( |\revised{\til P}_{n}(t/n)|\le \delta) = O_{\kappa,K}(\delta^2) 
\qquad\text{ and }\qquad
 \P( |\revised{\til P}'_{n}(t/n)|\le \delta) = O_{\kappa,K}(\delta^2) .$$
\end{corollary}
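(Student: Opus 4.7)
The plan is to avoid Theorem~\ref{thm:smallball} (whose four-dimensional small ball estimate, combined with a tail bound on $|\tilde P_n'(t)|$, would introduce an unwanted logarithmic factor) and instead run an Esseen-type Fourier smoothing argument directly on the two-dimensional projection $V := (\Re\tilde P_n(t), \Im\tilde P_n(t))\in\R^2$, using Theorem~\ref{thm:char} to control its characteristic function.

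Observe that $V$ is obtained from $(2n+1)^{-1/2}S_n(t)$ by projecting onto the first and third coordinates, so
\[
\hat\mu_V(\eta) = \E\,e\bigl(\langle S_n(t),\,\bs x(\eta)\rangle\bigr),\qquad \bs x(\eta) := (\eta_2,0,\eta_1,0)/\sqrt{2n+1}.
\]
The $n^\kappa$-smoothness of $t$ (applied with $p_0=1$) yields $\|t/(\pi n)\|_{\R/\Z}>n^{\kappa-1}$, hence $\|t/(2\pi n)\|_{\R/\Z}\ge \tfrac12\|t/(\pi n)\|_{\R/\Z} > 1/n$; that is, $t$ is $1$-spread in the $m=1$ sense of Definition~\ref{def:spread}. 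Theorem~\ref{thm:char} applied with $K_*:=K+1$ therefore gives
\[
|\hat\mu_V(\eta)| \le \exp(-\log^2 n) \quad\text{ for } c_1 n^{3/8}\le |\eta|\le c_2 n^{K+3/2}.
\]
A standard smoothing lemma---take a nonnegative $\phi\in C_c^\infty(\R^2)$ with $\hat\phi$ compactly supported in some fixed $B(0,R)$ and $\phi\ge\mathbf 1_{B(0,1)}$---then yields
\[
\sup_{w\in\R^2} \P(V\in B(w,\delta)) \ll \delta^2 \int_{|\eta|\le R/\delta} |\hat\mu_V(\eta)|\,d\eta,
\]
so it suffices to show the integral is $O_{\kappa,K}(1)$ whenever $\delta\ge n^{-K}$.

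Split the integral at $|\eta|=n^{3/8}$. For $n^{3/8}\le|\eta|\le R/\delta$, Theorem~\ref{thm:char} gives a contribution of at most $e^{-\log^2 n}\cdot O(\delta^{-2}) = o(1)$. For $|\eta|\le n^{3/8}$, writing $\bs v_j = (\cos(jt/n),\sin(jt/n))$ we have $\hat\mu_V(\eta) = \prod_{j=-n}^n \hat\xi((2n+1)^{-1/2}\eta\cdot\bs v_j)$, and each argument has modulus $\le |\eta|/\sqrt{2n+1}\ll n^{-1/8}$, which lies below the sub-Gaussian threshold $s_0>0$ at which $|\hat\xi(s)|\le \exp(-cs^2)$. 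Therefore
\[
|\hat\mu_V(\eta)| \le \exp\Bigl(-\tfrac{c}{2n+1}\,\eta^\tran M\eta\Bigr),\qquad M := \sum_{j=-n}^n \bs v_j\bs v_j^\tran.
\]
A short Dirichlet-kernel computation shows $M = \tfrac{2n+1}{2}I_2 + \tfrac12\diag(A,-A)$, where $A = \sum_{j=-n}^n\cos(2jt/n) = \sin((2n+1)t/n)/\sin(t/n)$, and the $n^\kappa$-smoothness of $t$ gives $|\sin(t/n)|\gg n^{\kappa-1}$, hence $|A|\ll n^{1-\kappa}$. Consequently $\eta^\tran M\eta \gg (2n+1)|\eta|^2$, $|\hat\mu_V(\eta)|\le e^{-c'|\eta|^2}$, and the low-frequency piece is bounded by $\int_{\R^2}e^{-c'|\eta|^2}\,d\eta = O(1)$. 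This delivers $\P(|\tilde P_n(t)|\le\delta) = O_{\kappa,K}(\delta^2)$.

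The second estimate is proved identically after projecting $(2n+1)^{-1/2}S_n(t)$ onto coordinates $2$ and $4$ to recover $(\Im\tilde P_n'(t),\Re\tilde P_n'(t))$. The matrix $M$ is replaced by $M' := \sum_j(j/n)^2\bs u_j\bs u_j^\tran$ with $\bs u_j = (-\sin(jt/n),\cos(jt/n))$, whose eigenvalues remain $\asymp 2n+1$: the weighted sum $\sum_j(j/n)^2\cos(2jt/n)$ is controlled by $O(n^{1-\kappa})$ upon differentiating the Dirichlet kernel twice and again invoking $|\sin(t/n)|\gg n^{\kappa-1}$. The rest of the argument is unchanged. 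The only delicate point in the whole proof is this lower bound on the eigenvalues of $M$ and $M'$, for which $n^\kappa$-smoothness is precisely what is needed to prevent $t/n$ from approximating a multiple of $\pi$.
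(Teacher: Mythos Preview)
Your argument is correct and, in substance, is the natural two-dimensional adaptation of the paper's proof of \Cref{thm:smallball} (Section~\ref{sec:smallball}): an Esseen-type smoothing, with \Cref{thm:char} handling the high-frequency shell and a covariance lower bound handling the low-frequency core. The paper simply labels the corollary a ``consequence'' of \Cref{thm:smallball} without further detail; as you observe, a black-box application of the four-dimensional bound would cost a $\log n$ factor from covering the range of $\tilde P_n'(t)$, so what is really meant is that the same Esseen argument applies to the two-coordinate projection. Your verification that $n^\kappa$-smoothness (with $p_0=1$) forces $\|t/(2\pi n)\|_{\R/\Z}\ge \tfrac12\|t/(\pi n)\|_{\R/\Z}>1/n$ is exactly what is needed to feed the $m=1$ spread hypothesis into \Cref{thm:char}, and your Dirichlet-kernel computation for the covariance eigenvalues is a concrete instance of \Cref{lem:cov} for this simple case.

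One small slip: you cannot take $\phi\in C_c^\infty(\R^2)$ with $\hat\phi$ compactly supported (both cannot be simultaneously compactly supported). The standard construction takes $\phi$ of Schwartz class---for instance $\phi=|\hat\psi|^2$ for a suitable bump $\psi$, so that $\hat\phi=\psi*\tilde\psi$ is compactly supported and $\phi\ge 0$---and then rescales so that $\phi\ge \mathbf 1_{B(0,1)}$. This does not affect the rest of your argument.
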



\subsection{Non-degeneracy of the covariance matrix}


As a first step towards controlling the distribution of $S_n(\bs t)$ we need to show that the random walk is genuinely $4m$-dimensional, which amounts to showing the covariance matrix $W_{[-n,n]}^\tran W_{[-n,n]}$ has smallest singular value of order $n$. 
This is accomplished by the following lemma, under the (necessary) assumption that the points $t_1,\dots, t_\mom$ are spread.

\begin{lemma}
\label{lem:cov}
Let $J\subset [n]$ be an interval with $|J|\gg n$.
If $\bs t=(t_1,\dots, t_\mom)\in \R^m$ is $\spread$-spread for some $\spread>0$, then 
\[
\|W_{J}(\bs t) u\|_2^2 \gg_\mom \min(\spread,1)^{6m-3} n
\]
uniformly over unit vectors $u\in S^{4m-1}$. 
\end{lemma}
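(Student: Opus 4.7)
The plan is to lower bound $\det(W_J^{\mathsf{T}}W_J)$ and combine this with a Frobenius-based upper bound on the remaining singular values of $W_J$ to extract $\sigma_{\min}(W_J)^2$. Since each entry of $\bs w_j$ has magnitude at most $\max(1, |j|/n) = O(1)$, we have $\|W_J\|_{\mathrm{HS}}^2 \le 4m|J| \lesssim mn$, so every singular value of $W_J$ is $O(\sqrt{mn})$. Hence $\prod_{i=1}^{4m-1}\sigma_i(W_J)^2 \lesssim_m n^{4m-1}$, and it suffices to show $\det(W_J^{\mathsf{T}}W_J) \gg_m \min(\lambda,1)^{6m-3}\, n^{4m}$.

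Via a unitary change of basis on $\R^{4m}$ that pairs up the $(\sin,\cos)$ and the $((j/n)\cos,-(j/n)\sin)$ blocks into complex exponentials and their conjugates, this determinant equals that of $\tilde W_J^*\tilde W_J$, where $\tilde W_J$ is the $|J|\times 4m$ complex matrix with columns $\{j\mapsto e^{\mathrm{i}j\beta_k},\; j\mapsto (j/n)e^{\mathrm{i}j\beta_k}\}_{k=1}^{2m}$ and $\beta_k$ enumerates $\{\pm t_r/n\}_{r=1}^m$. The spread hypothesis translates to $\|(\beta_k-\beta_l)/(2\pi)\|_{\R/\Z}\ge \lambda/n$ for $k\ne l$. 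I then split into two regimes.

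In the regime $\lambda\ge c_m$ for a constant $c_m=c_m(m)$ large enough, the Gram matrix $\tilde W_J^*\tilde W_J$ has diagonal entries $\asymp n$ while each off-diagonal $\sum_{j\in J}(j/n)^p\, e^{\mathrm{i}j(\beta_l-\beta_k)}$ is bounded by $O(n/\lambda)$ via standard summation-by-parts / Dirichlet-kernel estimates. For $c_m$ large enough this yields diagonal dominance, so Gershgorin gives $\sigma_{\min}(\tilde W_J^*\tilde W_J)\gg n$. In the complementary regime $\lambda<c_m$ the off-diagonals are comparable to the diagonals, and one must exploit the Hermite-confluent structure of the columns. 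Here I invoke \Cref{lem:dilate} with parameter $K=K(m)$ to obtain a dilation $L\asymp_m n$ for which the dilated nodes $\zeta_k:=e^{\mathrm{i}L\beta_k}$ are $\Omega_m(\lambda)$-separated on the unit circle. For each shift $a$ such that $S_a:=\{a+\ell L\}_{\ell=0}^{4m-1}\subset J$, a sequence of column operations on the minor $\tilde W_{S_a}$ --- subtract $(a/n)$ times the ``plain'' column $(k,0)$ from the ``weighted'' column $(k,1)$, factor out $e^{\mathrm{i}a\beta_k}$ from each column, then factor $L/n$ from each weighted column --- reduces $\det\tilde W_{S_a}$ (up to a unit-modulus factor) to $(L/n)^{2m}$ times the classical Hermite confluent Vandermonde determinant $\prod_{k<l}(\zeta_l-\zeta_k)^4$. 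Cauchy--Binet summed over the $\gtrsim_m n$ valid shifts $a$ then produces a first lower bound on $\det(\tilde W_J^*\tilde W_J)$.

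The main obstacle is extracting the \emph{sharp} exponent $\lambda^{6m-3}$: naive summation over only arithmetic-progression minors yields a weaker exponent of order $m^2$, which is insufficient for large $m$. Reaching the stated exponent calls for passing to a divided-difference basis of the $4m$-dimensional column space adapted to the nodes $\beta_1,\ldots,\beta_{2m}$ of multiplicity two; in this basis the Gram matrix on $\ell^2(J)$ converges as $\lambda\to 0$ to the well-conditioned Hankel moment matrix of Lebesgue measure on $[-1,1]$ (suitably scaled, with determinant $\Theta_m(n^{4m})$), and the squared determinant of the confluent-Vandermonde change-of-basis matrix contributes the precise power of $\lambda/n$ that, when balanced against the factor $n^{4m-1}$ coming from the Frobenius upper bound, reproduces the target $\min(\lambda,1)^{6m-3}\,n$. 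One must also rule out spurious degeneracies of $W_J$ when two opposite phases $t_r/n$ and $-t_r/n$ coalesce modulo $2\pi$ (e.g.\ $t_r\equiv \pi n$), which is not controlled by the spread hypothesis alone but is excluded in the intended applications of the lemma by the additional smoothness of $\bs t$.
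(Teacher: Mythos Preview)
Your determinant-plus-Frobenius strategy has a genuine structural problem: it cannot recover the stated exponent. You need $\det(W_J^{\mathsf T}W_J)\gg_m \min(\lambda,1)^{6m-3}n^{4m}$, but as $\lambda\to 0$ it is not only the \emph{smallest} eigenvalue of the Gram matrix that degenerates. For instance, when two points $t_r,t_{r'}$ coalesce, four pairs of columns of $W_J$ become nearly equal (the $\sin$, $\cos$, $(j/n)\cos$, and $(j/n)\sin$ columns), so several eigenvalues shrink simultaneously and the determinant decays with a power of $\lambda$ much larger than $6m-3$. Dividing this by the crude upper bound $\prod_{i<4m}\sigma_i^2\le (C_m n)^{4m-1}$ then yields an exponent on $\sigma_{\min}^2$ that is quadratic in $m$, exactly the defect you flag. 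Your proposed divided-difference patch does not repair this: a change of basis $B$ gives $\sigma_{\min}(W_J)^2\ge cn/\|B^{-1}\|_{\mathrm{op}}^2$, not $\det(\text{Hankel})\cdot|\det B|^2/n^{4m-1}$, so the ``balancing'' you describe does not take place. Controlling $\|B^{-1}\|_{\mathrm{op}}$ for the confluent interpolation matrix is essentially a different problem, and you have not indicated how to do it.

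The paper avoids the determinant entirely and bounds $\|W_J u\|_2$ directly. After the same unitary reduction to exponentials $e_n(jt_r)$ and $(j/n)e_n(jt_r)$ at the $2m$ frequencies $\pm t_r$, it restricts to a short arithmetic progression $P=\{iL\}$ (with $L$ from the dilation lemma), and applies \emph{twisted second-order difference operators} $D_{t_0}$ which annihilate the two columns at frequency $t_0$ while multiplying the others by explicit factors $[1-e_n(L(t-t_0))]^2$. Iterating over all frequencies but one isolates a single column times an explicit product $G$ of such factors, giving $\|M_Pv\|_2^2\gg |G|^2$ for each $v$ in a suitable piece of the sphere. One then amplifies by summing over $\Theta(\lambda^{O(m)}n)$ translates $k+P$ to pick up the factor of $n$. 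This produces the exponent in one shot without ever comparing eigenvalues of different sizes. Your observation about the antipodal degeneracy $t_r\equiv \pi n\pmod{2\pi n}$ is well taken: the $\lambda$-spread hypothesis for $m\ge 2$ does not by itself separate $t_r/n$ from $-t_r/n$, and this case is indeed only excluded in the paper's applications by the accompanying smoothness assumption on $\bs t$.
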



\begin{remark}
We note that for the case $\xi_j\sim \CN_\R(0,1)$, the above control on the covariance matrix is enough to deduce an optimal small ball estimate at all scales. For general distributions we need \Cref{thm:char}, the proof of which amounts to showing that for $v$ of size $n^{O(1)}$, the vector $W_J(\bs t)v$ avoid the \emph{lattice} $\Z^n$, rather than just the origin as above. 
\revised{The proof below can be read as a warmup to the more technical proof of \Cref{thm:char}, where a similar (but more complicated) differencing strategy is used.}
\end{remark}

\revised{
\begin{remark}
We point out that if $\lambda$ is growing with $n$, it is not hard to show by computations similar to \cite[Lemma 3.2]{KSch} that $\frac1{|J|}W_J(\bs t)^\tran W_J(\bs t)$ asymptotically splits into $m$ well-conditioned blocks. However, when $\lambda$ is bounded or shrinking with $n$ the covariance matrix becomes increasingly degenerate. We note that \cite[Lemma 3.2]{KSch} also contains estimates for the covariance matrix of the real and imaginary parts of $P_n(t)$ at a single point $t$ that is only $n^{-1/2}$-spread. In principle it should be possible to extend those arguments to the above setting with $m>1$ and additional columns for $P_n'$; however, this appears to involve technical case analysis, and in the end we do not think it would lead to a significantly shorter proof than the one given below. 
\end{remark}
}

\begin{proof}[Proof of \Cref{lem:cov}]
Without loss of generality we may assume $\spread\in (0,1)$.
Fix a vector $u = (u^1,u^2,u^3,u^4)\in S^{4\mom-1}$.
The $j$th entry of 
\revised{$W_J(\bs t)u$} is
\[
\langle \bs w_j,u\rangle =\sum_{r=1}^\mom u^1_r \sin(jt_r/n) + u^2_r (j/n)\cos(jt_r/n) + u^3_r \cos(jt_r/n) - u^4_r (j/n)\sin(jt_r/n).
\]
Substituting $\cos(jt_r/n) = \frac12(e_n(jt_r) +e_n(-jt_r))$ and $\sin(jt_r/n) = -\frac{\ii}{2} (e_n(jt_r) - e_n(-jt_r))$, 
the above becomes
\begin{align*}
&\frac12\sum_{r=1}^m (u_r^3-\ii u_r^1) e_n(jt_r) + (u_r^3+ \ii u_r^1)e_n(-jt_r)\\
&\qquad\qquad\qquad\qquad + (u_r^2+\ii u_r^4)(j/n) e_n(jt_r) + (u_r^2-\ii u_r^4) (j/n) e_n(-jt_r)\\
&\qquad=\Big\langle \big(\bs e_j  , \bar{\bs e}_j, (j/n) \bs e_j, (j/n) \bar{\bs e}_j\big)\,,\, A u \Big\rangle
\end{align*}
where
\[
\bs e_j := ( e_n(jt_1), \dots, e_n(jt_\mom))
\]
and
\[
A = \frac12
\begin{pmatrix} 
-\ii I_\mom & 0 & I_\mom & 0 \\
\ii I_\mom & 0 & I_\mom & 0\\
0 & I_\mom & 0 & \ii I_\mom \\
0 & I_\mom & 0 & -\ii I_\mom
\end{pmatrix}
\]
where $I_\mom$ is the $\mom\times\mom$ identity matrix and 0 is the square  matrix of 0s. 
Since $\|A^{-1}\|= O(1)$, it suffices to show
\[
\|M v\|_2^2 \gg_\mom \lam^{6m-3} n
\]
uniformly for $v$ in the complex sphere  $S_\C^{4\mom-1}$, where $M\in \C^{n\times 4m}$ is the matrix with rows 
\[
(\bs e_j, \bar{\bs e}_j, \ii(j/n) \bs e_j, \ii (j/n) \bar{\bs e}_j).
\]
From \Cref{lem:dilate} there exists an integer $\dil$ with $n\ll_m\dil<n/100m$ such that
\[
\Big\| \frac{\dil\cdot(t_r\pm t_{r'})}{2\pi n} \Big\|_{\R/\Z} \gg_m \spread
\qquad \forall 1\le r<r'\le m.
\]

For notational convenience we will consider $M$ with rows of the general form 
\[
(e_n(jt_1),\dots, e_n(jt_d), \ii (j/n) e_n(jt_1),\dots, \ii (j/n) e_n(t_d))
\]
satisfying
\begin{equation}	\label{s.dilated}
\Big\| \frac{\dil\cdot(t_r-t_{r'})}{2\pi n} \Big\|_{\R/\Z} \ge \spread_0
\qquad \forall 1\le r<r'\le d
\end{equation}
for some $\spread_0\in (0,1)$ and $n\ll_d L<n/50d$, and aim to show
\begin{equation}	\label{cov.goal1}
\inf_{v\in S_\C^{2d-1}}\|M v\|_2^2 \gg_d \lam_0^{3d-3} n.
\end{equation}
One passes back to the previous case by taking $d=2\mom$ and $(t_1,\dots, t_{2\mom}) = (t_1,\dots, t_\mom, -t_1,\dots, -t_\mom)$, and substituting any $c(m)\lam$ for $\lam_0$.

Let $P$ denote the intersection of the interval $J$ with the progression $\{iL: i\in\Z\}$, and let $M_P$ denote the submatrix of $M$ with rows indexed by $P$. Note that $|P|\asymp_d 1$.
We will first show
\begin{equation}	\label{MS.goal0}
\inf_{v\in S_\C^{2d-1}}\| M_P v\|_2^2 \gg_d \lam_0^{2d-2}.
\end{equation}
To do this we consider the twisted second-order differencing operators of the form
\begin{equation}	\label{def:Dt0}
(D_{t_0} f)(j) := \sum_{a=0}^2 {2\choose a} (-1)^a e(-aLt_0) f(j+ aL)
\end{equation}
acting on sequences $f: P\to \C$, for various choices of the parameter $t_0\in \R$.
Let us denote
\[
f_t(j) = e_n(jt),\qquad g_t(j) = \ii(j/n) e_n(jt) = \partial_t f_t(j).
\]
For $t,t_0\in \R$ and any $j\in P$ with $j+2L\in P$, we have
\begin{equation}	\label{DFt}
(D_{t_0} f_t)(j) = e_n(jt) \sum_{a=0}^2 {2\choose a} (-1)^a e(aL(t-t_0)) = \big[1-e_n(L(t-t_0))\big]^2 f_t(j)
\end{equation}
and
\begin{align}
(D_{t_0} g_t)(j) 
&= \sum_{a=0}^2 {2\choose a} (-1)^a e(aL(t-t_0)) \ii \frac{j+aL}{n} e_n((j+aL)t)	\notag\\
&= \ii (j/n)(D_{t_0} f_t)(j)
+ e_n(jt) \bigg[ -2\ii \frac{L}n e_n(L(t-t_0)) + 2\ii \frac{L}n e_n(2L(t-t_0)) \bigg]	\notag\\
&= \big[1-e_n(L(t-t_0))\big]^2 g_t(j) - 2\ii \frac{L}n e_n(L(t-t_0)) \big[1-e_n(L(t-t_0))\big] f_t(j)	\notag\\
&= \big[1-e_n(L(t-t_0))\big]^2 \big[ g_t(j) + \beta_L(t-t_0) f_t(j) \big]	\label{DGt}
\end{align}
where we write $\beta_L(s) := -2\ii\frac Ln e_n(Ls) / \big[1-e_n(Ls)\big]$. 
In particular, we have
\begin{equation}	\label{Dt.nihil}
(D_{t_0} f_{t_0})(j) = (D_{t_0}g_{t_0})(j) = 0\qquad \forall j.
\end{equation}
The key point about the factors $1-e_n(L\,\cdot)$ and  $\beta_L(\cdot)$ is that they are independent of $j$ and hence pass through the difference operators $D_{t_0}$. 

For the lower bound \eqref{MS.goal0} we partition the sphere into $d$ pieces
\[
S_r = \{ v\in S_\C^{2d-1}: |v_r|^2 + |v_{r+d}|^2 \ge 1/d\}\,\qquad 1\le r\le d
\]
and prove the bound separately on each piece. 
By symmetry it suffices to treat $S_d$.
We abbreviate
\[
G:= \prod_{r=1}^{d-1} \big[1-e_n(L(t_d-t_r))\big]^2\,, \qquad 
H:= \sum_{r=1}^{d-1} \beta_L(t_d-t_r).
\]
Iterating the identities \eqref{DFt}--\eqref{Dt.nihil}, we obtain that for any $j\in P$ such that $j+2dL\in P$, 
\[
\big( D_{t_1}\circ \cdots \circ D_{t_{d-1}} f_{t_r} \big) (j)  = 0 \qquad 1\le r\le d-1
\]
and otherwise
\[
\big( D_{t_1}\circ \cdots \circ D_{t_{d-1}} f_{t_d} \big) (j) 
=G \cdot f_{t_d}(j) .
\]
Similarly,
\[
\big( D_{t_1}\circ \cdots \circ D_{t_{d-1}} g_{t_r} \big) (j)  = 0 \qquad 1\le r\le d-1
\]
and otherwise 
\[
\big( D_{t_1}\circ \cdots \circ D_{t_{d-1}} g_{t_d} \big) (j) 
= G \cdot \big(g_{t_d}(j) + H\cdot  f_{t_d}(j) \big).
\]

Fix an arbitrary $v\in S_d$. 
Recognizing the sequences $(f_{t_r}(j))_{j\in P}, (g_{t_r}(j))_{j\in P}$ as the $2d$ columns of $M_P$, we have
\[
(M_P v)_j = \sum_{r=1}^d v_r f_{t_r}(j) + v_{r+d} g_{t_r}(j).
\]
Letting $\bs D$ be the matrix associated to the linear operator  $D_{t_1}\circ \cdots \circ D_{t_{d-1}}$ on $\C^P$, we have
\begin{align*}
(\bs D M_P v)_j 
&= v_d G f_{t_d}(j) + v_{2d}  G (g_{t_d}(j) + H f_{t_d}(j))\\
&=G \cdot e_n(jt_d) \big[ v_d + \big( \ii (j/n) + H\big) v_{2d} \big]
\end{align*}
for each $j\in P$ such that $j+2dL\in P$.
Taking the modulus of each side and square-summing we obtain
\begin{align*}
\sum_{j\in P : j+2d L \in P} |(\bs D M_P v)_j |^2 
& = |G|^2 \sum_{j\in P : j+2d L \in P} \big| v_d + \big( \ii (j/n) + H\big) v_{2d} \big|^2.
\end{align*}
From \eqref{s.dilated} we have
\[
G \ge (c\lam_0)^{2d-2}, \qquad H= O(d/\lam_0).
\]
In particular, since $v_d, v_{2d}$ and $H$ are independent of $j$, and $|v_d|^2 + |v_{2d}|^2 \ge 1/d$, the sum on the right hand side of the previous display is at least $\gg |P|/d^2\gg_d1$, so 
\[
\sum_{j\in P : j+2d L \in P} |(\bs D M_P v)_j |^2 \gg_d \lam_0^{2d-2}.
\]
On the other hand, since the matrix $\bs D$ has $\ell_2(P)\to \ell_2(P)$ operator norm $O(d)$, the left hand side is bounded above by $\ll_d \|M_P v\|_2^2$,
and we obtain \eqref{MS.goal0} as desired. 

It only remains to prove \eqref{cov.goal1}.
Consider the submatrices $M_P, M_{1+P}, \dots, M_{n_0+ P}$
composed of rows indexed by the shifted progressions $P, 1+ P, \dots, n_0+P$, respectively. 
If $n_0<L$ then these submatrices are all disjoint. 
Moreover, letting $F$ denote the $2d$-dimensional diagonal matrix with diagonal entries $e_n(t_1),\dots, e_n(t_d), e_n(t_1),\dots, e_n(t_d)$, we note that $M_{k+P}$ and $M_PF^k$ differ by a matrix of norm $O_d(k/n)$ (as they only differing in the dilations by $\ii j/n$ in the last $d$ columns).
Since $F$ is unitary we have $\sigma_{2d}(M_P F^k) = \sigma_{2d}(M_P) \gg_d \lam_0^{d-1}$, and taking $n_0=c(d)\lam_0^{d-1}n$ for $c(d)>0$ sufficiently small depending on $d$, from the triangle inequality we obtain that $\sigma_{2d}(M_{k+P}) \gg_d \lam_0^{d-1}$ for all $1\le k\le n_0$. 
Since $1+P,\dots, n_0+P$ are disjoint, we conclude that for any fixed $v\in S_\C^{2d-1}$, 
\[
\|Mv\|_2^2 \ge \sum_{k=1}^{n_0} \|M_{k+P}v\|_2^2 \gg_d n_0\lam_0^{2d-2}\gg_d \lam_0^{3d-3} n
\]
giving \eqref{cov.goal1} as desired.
\end{proof}

\section{Proof of \Cref{prop:joint}}
\label{sec:joint.proof}

In this section we combine Theorems \ref{thm:box} and \ref{thm:smallball} to prove \Cref{prop:joint}.
In fact we will need the following more general result, which in particular establishes universality for the joint distribution of the recentered near-local minimizers $Y_{\al_i}$ and corresponding near-local minima $X_{\al_i}$. 

\begin{proposition}
\label{prop:joint.gen}
Fix an $m$-tuple of indices $(\al_1,\dots, \al_\mom)\in [N]^\mom$, 
and assume $\bs s= (s_{\alo}, \dots, s_{\alm})$ is $n^\smooth$-smooth and $1$-spread for some $\smooth>0$.
Let $J_1,\dots, J_\mom\subset\R$, $J'_1,\dots,J'_\mom\subseteq[-\pi,\pi]$ be arbitrary compact intervals with lengths in the range $[n^{-L_0}, n^{L_0}]$ for some $L_0>0$, and denote the event
\begin{equation}
\CE= \bigwedge_{i\in [\mom]} \big\{ X_\ali \in J_i, \,NY_\ali \in J_i' \big\}.
\end{equation}
We have
\begin{align}	
&
\big| \P(\CE) - \P_{\CN_{\R}(0,1)}(\CE) \big|
\ll_{\mom,\smooth, L_0}
\frac{\log^{O(m)}n}{n^{1/2}N^m}  \prod_{i=1}^m |J_i||J_i'|.		\label{joint.compare}
\end{align}
Moreover, if $\bs s$ is $n^\smooth$-smooth and $\spread$-spread for some $\omega(n^{-1/8\mom})\le \spread\le1$, then we have the upper bounds
\begin{equation}	\label{joint.ub}
\P(\CE) 
 \ll_{\mom,\smooth, L_0}
\frac{ \log^{O(m)}n}{\lam^{3\mom} N^{\mom}  }\prod_{i=1}^\mom |J_i||J_i'|\,,
\end{equation}
and
\begin{equation}	\label{joint.ub.gaussian}
\P_{\CN_\R(0,1)}(\CE) 
 \ll_{\mom,\smooth, L_0}
\frac{1}{\lam^{O(\mom^2)} N^{\mom}  }\prod_{i=1}^\mom |J_i||J_i'|\,.
\end{equation}
\end{proposition}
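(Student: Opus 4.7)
The plan is to encode $\CE$ as the event $\{\frac{1}{\sqrt{2n+1}}S_n(\bs s)\in\CU_n\}$ for an explicit ``admissibility'' set $\CU_n\subset\R^{4m}$, and then to derive all three bounds from the tools in Section~\ref{sec:RW}: Theorem~\ref{thm:box} for the universality statement~\eqref{joint.compare}, Theorem~\ref{thm:smallball} for the sub-Gaussian upper bound~\eqref{joint.ub}, and Lemma~\ref{lem:cov} (through a direct Gaussian density estimate) for~\eqref{joint.ub.gaussian}. All three reductions funnel through a single Lebesgue-measure estimate on $\CU_n$.

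To set up $\CU_n$: the coordinates of $\frac{1}{\sqrt{2n+1}}S_n(\bs s)$ are, up to reordering, the real and imaginary parts of the complex pairs $(\til P_n(s_{\al_i}),\til P_n'(s_{\al_i}))$ for $i\in[m]$, and both $Z_{\al_i}$ and $Y_{\al_i}$ are explicit rational functions of these pairs. Since each $J_i\subset\R$ is compact and $J_i'\subseteq[-\pi,\pi]$, the events $X_{\al_i}\in J_i$ and $NY_{\al_i}\in J_i'$ automatically force $\CA_{\al_i}$, so $\CE$ reduces to a conjunction of conditions on the $m$ pairs. To estimate $|\CU_n|$, parameterize $\til P_n'(s_{\al_i})=\rho_i e^{\ii\phi_i}$ and rewrite $\til P_n(s_{\al_i})$ in the frame rotated by $-\phi_i$: the joint constraint $(Z_{\al_i},NY_{\al_i})\in J_i\times J_i'$ pins the rotated $\til P_n(s_{\al_i})$ to an axis-aligned rectangle of area $\rho_i|J_i||J_i'|/N$, which for $\rho_i\le C_0\sqrt{\log n}$ lies inside the disc $\{|\til P_n(s_{\al_i})|\le n^{-1/2}\}$ demanded by $\CA_{\al_i}''$. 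Integrating $\rho_i\,d\rho_i\,d\phi_i$ over the annulus $\rho_i\in[\log^{-\Cp/2}n,C_0\sqrt{\log n}]$ and taking a product over $i$ produces
$$
|\CU_n|\ll_m (\log n)^{O(m)}N^{-m}\prod_{i=1}^m|J_i||J_i'|.
$$

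With $\CU_n$ in hand, each bound follows from an appropriate cover at a polynomially small scale $\delta=n^{-K}$. For~\eqref{joint.compare}, cover $\CU_n$ by boxes of side $\delta$, apply Theorem~\ref{thm:box} to each box, and sum the termwise errors of size $O(n^{-1/2}|Q|)$ to obtain a total error of order $n^{-1/2}|\CU_n|$. For~\eqref{joint.ub}, cover by $\asymp |\CU_n|/\delta^{4m}$ balls of radius $\delta$ and sum the small ball bound of Theorem~\ref{thm:smallball} termwise to produce the factor $\lambda^{-3m}|\CU_n|$. For~\eqref{joint.ub.gaussian}, the density of $\Gamma_n$ is bounded by $(2\pi)^{-2m}(\det\Sigma)^{-1/2}$, which Lemma~\ref{lem:cov} controls by $\lambda^{-O(m^2)}$, and integrating this against $\CU_n$ produces the claim; the $(\log n)^{O(m)}$ factor from $|\CU_n|$ is absorbed by slightly enlarging the implicit exponent in $\lambda^{-O(m^2)}$, using the standing bound $\lambda\ge n^{-1/8m}$ under which $(\log n)^{O(m)}\ll \lambda^{-O(1)}$.

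The main delicate point is the parameterization step: without rotating $\til P_n$ into the frame of $\til P_n'$, the defining constraints of $\CU_n$ are curved, and the axis-aligned box covers required by Theorem~\ref{thm:box} would be wasteful. Verifying that the rotated-frame rectangle really sits inside the $n^{-1/2}$-disc of $\CA_{\al_i}''$ (so that no residual indicator must be tracked after linearization) and carefully propagating the log factors through~\eqref{joint.ub.gaussian} are the only bookkeeping obstacles; the rest is a direct application of the small-ball, comparison, and covariance-lower-bound results of Section~\ref{sec:RW}.
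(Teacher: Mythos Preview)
Your overall strategy matches the paper's almost exactly: encode $\CE$ as $\{\wt S\in\CU\}$ for a phase-space region $\CU$ whose fibers over $\Bb=(\til P_n'(s_{\al_i}))_i$ are rotated rectangles, compute $\mLeb(\CU)$, and feed this into Theorems~\ref{thm:smallball} and~\ref{thm:box} and Lemma~\ref{lem:cov}. Your polar parameterization is precisely the paper's definition of the rectangles $T_i(\Bb)$ in~\eqref{def:Tb}.

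There is, however, a genuine error in your argument for~\eqref{joint.ub.gaussian}. Your absorption of the $(\log n)^{O(m)}$ factor into $\lambda^{-O(m^2)}$ rests on the claimed inequality $(\log n)^{O(m)}\ll\lambda^{-O(1)}$, which you justify via $\lambda\ge n^{-1/8m}$. But that hypothesis gives only an \emph{upper} bound $\lambda^{-C}\le n^{C/8m}$; it does not force $\lambda^{-C}$ to be large. Indeed $\lambda=1$ is permitted, in which case you would need $(\log n)^{O(m)}\ll 1$. So the crude estimate $\P(\Gamma\in\CU)\le\|p_\Gamma\|_\infty\cdot\mLeb(\CU)$ cannot deliver~\eqref{joint.ub.gaussian} without the logarithmic loss. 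The paper avoids this by conditioning on $\Bb$: the conditional density of $\Ba$ given $\Bb$ is Gaussian with covariance equal to a Schur complement of $V_n$, which inherits the eigenvalue lower bound from Lemma~\ref{lem:cov} (the relevant block of $V_n^{-1}$ is the inverse of this Schur complement), so the conditional density is $\ll\lambda^{-O(m^2)}$. The $\Ba$-fiber has Lebesgue measure $N^{-m}\prod_i\|\Bb_i\|_2|J_i||J_i'|$, and one then integrates the factor $\prod_i\|\Bb_i\|_2$ against the \emph{marginal Gaussian law} of $\Bb$ rather than against Lebesgue measure on $\cD^m$. Since each $\Bb_i$ has bounded covariance, $\E\prod_i\|\Bb_i\|_2=O_m(1)$, and the logarithmic factor disappears.

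A smaller omission: for~\eqref{joint.compare} you write ``cover $\CU_n$ by boxes of side $\delta$ and sum the termwise errors from Theorem~\ref{thm:box}''. A cover controls only $|\P(\wt S\in\CU_+)-\P(\Gamma\in\CU_+)|$ for the covering set $\CU_+\supseteq\CU$, and one must separately bound $\P(\wt S\in\CU_+\setminus\CU)$ and $\P(\Gamma\in\CU_+\setminus\CU)$. The paper handles this by noting that the corners of $T_i(\Bb)$ are $n^{O(L_0+1)}$-Lipschitz in $\Bb\in\cD$, which permits a sandwich $\CU_-\subset\CU\subset\CU_+$ by unions of cubes with $\mLeb(\CU_+\setminus\CU_-)\le n^{-100}\mLeb(\CU)$; the boundary contribution is then dispatched by Theorem~\ref{thm:smallball}. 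This is routine once noticed, but your sketch does not address it.
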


For the above bounds, the point is that the trivial bound on $\P_{\CN_{\R}(0,1)}(\CE)$, obtained by controlling the Gaussian measure by Lebesgue measure, is of order $N^{-m} \prod_{i=1}^m |J_i||J_i'|$ (this will be shown in the proof, but can also be understood on the heuristic level). For the error in \eqref{joint.compare} we save $\ll n^{-1/2+\eps}$ on this bound, while in \eqref{joint.ub} we obtain the same order upper bound for $\P(\CE)$ up to a tolerable loss of a factor $\lam^{-3m}\log^{O(m)}n$.

We commence with the proof of \Cref{prop:joint.gen}.
Let $K_*>0$ to be chosen sufficiently large and set $\delta=n^{-K_*}$. 
We first describe the event $\CE$ as a domain in $\R^{4\mom}$. 
Let $\cD$ denote the annulus 
\[
\cD:= B(0, C_0\sqrt{\log n})\setminus B(0, \log^{-\Cp/2}n)\subset\R^2.
\]
For $\Bb = (b,b')\in \R^2$ we write $\Bb^\perp := (b', -b)$, and define the rectangles
\begin{equation}	\label{def:Tb}
T_i(\Bb) = \bigg\{ \Ba \in \R^2 : \frac{\Ba\cdot \Bb^\perp}{\|\Bb\|_2} \in  \frac1n\cdot J_i\;, \;\; -\frac{ \Ba\cdot \Bb }{ \|\Bb\|_2^2 } \in  \frac{n}N\cdot J_i' \bigg\}\,,
\,\qquad 1\le i\le \mom,
\end{equation}
which have sides of length $ n\|\Bb\|_2 |J_i'|/N$ and $|J_i|/n$ in the direction of $\Bb$ and $\Bb^\perp$, respectively.
(Here we write $C\cdot J_i$ for the dilation of $J_i$ by a factor $C$.) 
Let
\begin{equation}	\label{def:U.Psi}
U_i = \Big\{ (a,a',b,b')= (\Ba,\Bb)\in \R^4: \Bb\in \cD, \;\Ba \in T_i(\Bb) \Big\}
\,,\qquad 
\CU= \prod_{i=1}^\mom U_i\,.
\end{equation}
Abbreviating henceforth
\begin{equation}	\label{def:tStSg}
\wt{S}:= \frac1{\sqrt{2n+1}}S_n(\bs t ), 	
\end{equation}
one sees that the left hand sides of \eqref{joint.compare} and \eqref{joint.ub} can be expressed as $|\P(\wt S\in \CU) - \P(\Gamma\in \CU)|$ and $\P(\wt S\in \CU)$, respectively. 

From the dimensions of the rectangles $T_i(\Bb)$ we have from Fubini's theorem that
\begin{equation}	\label{Ui.Leb}
\mLeb(U_i) 
= \int_\cD \mLeb(T_i(\Bb))d\Bb = \frac{\Delta}N |J_i||J'_i|
\end{equation}
where we denote
\begin{equation}	\label{CD.int}
\Delta:= \int_\cD \|\Bb\|d\Bb \sim \frac{2\pi C_0}3\log^{3/2}n.
\end{equation}
Thus,
\begin{equation}	\label{CU.Leb}
\mLeb(\CU) = (\Delta/N)^m \prod_{i=1}^m |J_i||J_i'| = \frac{\log^{O(m)}n}{N^m} \prod_{i=1}^m |J_i||J_i'| .
\end{equation}
For the measure of $\CU$ under the law of $\Gamma$, recall from \Cref{lem:cov} that the norm of the inverse of the covariance matrix of $\Gamma$ has operator norm of size $O(\lam^{-3m})$, and hence determinant of size $O(\lam^{-O(m^2)})$. 
By controlling the conditional density of $\Gamma$ in directions $(\Ba_1,\dots, \Ba_m)$ for fixed $(\Bb_1,\dots, \Bb_m)$ by the Lebesgue measure, and then integrating over $\cD^m$ under the marginal Gaussian measure, we get
\begin{equation}
\P(\Gamma\in \CU)  \ll_{\mom,\smooth, L_0}
\frac{1}{\lam^{O(\mom^2)} N^{\mom}  }\prod_{i=1}^\mom |J_i||J_i'|,
\end{equation}
giving \eqref{joint.ub.gaussian} as desired. 

We next note that the corners of the rectangles $T_i(\Bb)$ are $n^{O(L_0+1)}$-Lipschitz functions of $\Bb\in \cD$.
From this it follows that if $K_*$ is sufficiently large depending on $L_0$ and $m$, we can find sets $\CU_-\subset \CU\subset \CU_+$ such that $\CU_-$ and $\CU_+\setminus \CU_-$ are unions of cubes in $\R^{4m}$ of side length $\delta$ with disjoint interiors, and such that $\mLeb(\CU_+\setminus\CU_-) \le n^{-100}\mLeb(\CU)$ (say). 

The bound \eqref{joint.ub} now follows by
covering each cube in $\CU_+$ with balls of bounded overlap and applying the union bound, \Cref{thm:smallball}, and \eqref{CU.Leb}.

For \eqref{joint.compare}, we bound
\[
|\P(\wt S\in \CU) - \P(\Gamma\in\CU) | \le \P(\wt S\in \CU_+\setminus \CU_-) + \P(\Gamma\in \CU_+\setminus \CU_-)+ \sum_{Q} |\P(\wt S\in Q) - \P(\Gamma\in Q)| 
\]
where the sum runs over the cubes comprising $\CU_-$. Using the union bound and \Cref{thm:smallball} as we did for $\CU_+$, the first two terms above are of size 
\[
\ll \mLeb(\CU_+\setminus \CU_-) \ll n^{-100} \mLeb(\CU). 
\]
For the sum over $Q$, use \Cref{thm:box} to bound each term by $\ll_{m,\kappa, K_*} n^{-1/2} \mLeb(Q)$. Altogether we have
\[
|\P(\wt S\in \CU) - \P(\Gamma\in\CU) | \ll_{m,\kappa,K_*} n^{-1/2} \mLeb(\CU)
\]
and the claim now follows from \eqref{CU.Leb}.
This concludes the proof of \Cref{prop:joint.gen}.
\qed\\

\section{Proof of \Cref{prop:moments} for the real-valued case (moment comparison) }
\label{sec:proof-moments}

We condition on $\CG_{2}(\Cp/2)$ throughout the proof. As remarked before, in the real-valued case it suffices to work with $x_\al \in [0,\pi]$ because $P_n(-x) = \wb{P_n}(x)$. 
We allow implied constants to depend on $\mom$ and $\height$ without indication.
Recall also that $\smooth_0$ in the definition \eqref{def:Mn.sharp} of $\Points^\sharp_n$ is an absolute constant. 
For $\bs\al = (\al_1,\dots, \al_m)\in [N]^m$ we denote events
\[
\CE(\bs\al) := \bigg\{\bigwedge_{i\in [\mom]}  |X_\ali| \le \height \bigg\}\,.
\]
We have
\begin{align}
\E \Big( \Points^\sharp_n\big([-\height,\height]\big)^\mom\Big)  &= \sum_{\bs\al\in E} \pr(\CE(\bs\al))  
=  \sum_{ \bs \al \in E'}  \pr(\CE(\bs\al))  
  +  \sum_{\bs\al \in E\setminus E'}   \pr(\CE(\bs\al))  	\label{mom-split}
\end{align}
where
\begin{align*}
E &:= \big\{\bs\al= (\al_1,\dots, \al_\mom)\in [N/2]^\mom : x_{\al_1},\dots, x_{\al_\mom} \notin \badarcs(\smooth_0)  \big\} \,,\\
E' &:= \big\{  \bs \al \in E:  |x_{\al_i}-x_{\al_j}|> 4\pi/n\; \forall 1\le i<j\le \mom \big\}\,.
\end{align*}

Note that if $x,x' \in [0,\pi]$ such that $|\frac{x-x'}{2\pi}|\ge \frac{\la}{n}$ then we also have $\frac{\la}{n} \le \frac{x+x'}{2\pi} \le 1- \frac{\la}{n}$.
Hence within $E'$ the angles are $1$-spread and by Proposition \ref{prop:joint} 
\[
\bigg| \sum_{\bs\al\in E'} \pr\big(\CE(\bs\al)\big)
- \sum_{\bs\al\in E'} \pr_\Bg \big(\CE(\bs\al)\big)
\bigg| 
\le N^{\mom} 
o(N^{-m}) = o(1).
\]
It only remains to bound the sum over $\bs\al\in E\setminus E'$.

By \Cref{lem:neararcs}, under $\CG_{2}(\Cp/2)$, it suffices to consider 
$m$-tuples of the form
\begin{equation}	\label{form.pairs}
(\al_1,\dots, \al_{\mom -k}, \al_1+1, \al_2+1,\dots, \al_k+1)
\end{equation}
consisting of $k$ pairs of points $(\al_l,\al_l+1)$ that are immediate neighbors, for some $0\le k\le \mom/2$, while the $\mom-k$ points
$x_{\al_1},\dots, x_{\al_{\mom-k}}$ are separated by at least $4\pi/(n \log^{3\Cp}n)$ in $[0,\pi]$. Note also that by the remark above we also have $\frac{x_{\al_i}+ x_{\al_j}}{2\pi} \ge 1/(n \log^{3\Cp}n)$. 


We divide this class of such $\bs\al$ into two sets $E_1, E_2$, where
$E_1$ is the set of $\bs\al\in E\setminus E'$ of the form \eqref{form.pairs} (possibly with $k=0$) such that $|x_{\al_i}-x_{\al_j}|\le 4\pi/n$ for some $1\le i<j\le \mom-k$,
and $E_2$ is the set of $\bs\al\in E\setminus E'$ of the form \eqref{form.pairs} with $k\ge 1$ and 
$|x_{\al_i}-x_{\al_j}|> 4\pi/n$ for all $1\le i<j\le \mom-k$.

For the sum over $E_1$, 
we have $|E_1|=O(N^{\mom -k}/n)$ since there are $O(N/n)$ options for the close point with all others fixed. 
As the points $x_{\al_1},\dots, \dots,  x_{\al_{\mom -k}}$ are separated by at least $4\pi/(n \log^{3\Cp}n)$, 
from the upper bound \eqref{joint.ub} in \Cref{prop:joint.gen} with $J_i \equiv [-\height,\height]$ and $J_i'\equiv [-\pi,\pi]$, 
we have
\begin{align*}
\sum_{\bs\al\in E_1} \pr\big( \CE(\bs\al)\big)
\ll (N^{\mom -k}/n)\times  N^{-\mom} \height^\mom \log^{O(K_0m)}n
\ll \frac1n\log^{O(m)}n = o(1).
\end{align*}

For the sum over $E_2$, 
by \Cref{lem:neararcs}, under $\CG_{2}(\Cp/2)$ we have the containment of events
\[
\big\{ |X_{\al_i}| \le \height \;,\; |X_{\al_i+1}| \le \height \big\} 
\subset 
\bigg\{  |X_{\al_i}| \le \height \;,\; Y_{\al_i}  \in \Big[ \frac\pi{N} - \frac{\pi}{N \log^{\Cp/4}n}  , \frac{\pi}N \Big]\bigg\}\,,
\]
so for each such $\bs\al$ we can bound
\[
\pr\big(\CE(\bs\al)\big)
\le
\pr\bigg( \, Y_{\al_1}  \in \Big[ \frac\pi{N} - \frac{\pi}{N \log^{\Cp/4}n}  , \frac{\pi}N \Big]\,,\,\bigwedge_{i\in [\mom-k]}  |X_\ali| \le \height \, \bigg) .
\]
Applying \eqref{joint.compare} with $\mom-k$ in place of $\mom$, $\lam = 1/2$ (say), $J_i\equiv[-\height,\height]$, $J_1' = [\pi( 1- \log^{-\Cp/4}n), \pi]$, and $J_i'=[-\pi,\pi]$ for $2\le i\le \mom-k$, the right hand side above is bounded by
\[
\pr_{\CN_\R(0,1)}\bigg( \, Y_{\al_1}  \in \Big[ \frac\pi{N} - \frac{\pi}{N \log^{\Cp/4}n}  , \frac{\pi}N \Big]\,,\,\bigwedge_{i\in [\mom-k]}  |X_\ali| \le \height \, \bigg) +o(N^{-(m-k)}).
\]
Finally, we apply \eqref{joint.ub.gaussian} to bound the first term above by $o(N^{-(m-k)})$.
Combining the preceding displays and summing over $\bs\al\in E_2$ gives
\begin{align*}
\sum_{\bs\al\in E_2} \pr\big( \CE(\bs\al)\big) =o(1).
\end{align*}
We have thus shown that the sum over $\bs\al\in E\setminus E'$ in \eqref{mom-split} is $o(1)$, which completes the proof of \Cref{prop:moments}.\qed

\section{Proof of \Cref{thm:main} (main result)}
\label{sec:proof-main}

We fix $\kappa=\kappa_0$ as in \Cref{lem:badarcs}, and let $\tau>0$ be arbitrary. As in the previous section we allow implied constants to depend on $\mom$ and $\tau$.
It follows from \Cref{prop:moments} that
\[
\lim_{n\to \infty} \Big|\P\big({\Points^\sharp_{\Bg}}([-\height,\height])=0\big) - \P\big(\Points^\sharp([-\height,\height])=0\big)\Big| =0.
\]
On the other hand, by \Cref{thm:YZ} and \Cref{lem:badarcs},
\[
\lim_{n\to \infty} \bigg|\P_{\Bg}\Big(m_n > \frac{\height}{n}\Big) - \P_{\Bg}\Big(\Points^\sharp([-\height,\height])=0\Big) \bigg|=0
\]
and hence it suffices to show
\[
\lim_{n\to \infty} \bigg|\P\Big(m_n > \frac{\height}{n}\Big) - \P\Big(\Points^\sharp([-\height,\height])=0\Big) \bigg|=0.
\]
To this end, recall that on the event $\CG_{2}(\Cp)$,
\begin{equation}\label{eqn:PF}
|P(x) - F_\al(x)| \le N^{-2} \sup_{x\in [-\pi,\pi]} |P''(x)|  \le \frac{\log^{3\Cp} n}{n^2}
\end{equation}
for all $x\in I_\al$.
By Lemma \ref{lem:badarcs} we have
\begin{align*}
&
\bigg|\P\Big(m_n > \frac{\height}{n}\Big) - \P\Big(\Points^\sharp([-\height,\height])=0\Big) \bigg| \\
&\qquad\qquad \le \Pr\Big( m_n > \frac{\height}{n}\;,\; \Points^\sharp([-\height,\height])\ge1 \Big)
+ \Pr\Big( m_n \le \frac{\height}{n}\;,\; \Points^\sharp([-\height,\height])= 0 \Big)\\
&\qquad\qquad\le \sum_{\al\in[N]: x_\al \notin \badarcs(\smooth)} \P\Big(\CG_{2}(\Cp) \,\wedge\, |X_\al| < \height \,\wedge\, \min_{x\in I_\al} |P(x)| \ge \height/n \Big)\\
&\qquad\qquad\qquad\qquad+
\sum_{\al\in[N]: x_\al \notin \badarcs(\smooth)} \P\Big(\CG_{2}(\Cp) \,\wedge\, |X_\al| \ge \height \,\wedge\, \min_{x\in I_\al} |P(x)| < \height/n \Big) +o(1)\\
&\qquad\qquad \le \sum_{\al\in[N]: x_\al \notin \badarcs(\smooth)} \P \bigg(|X_\al|\in  \bigg[\height - \frac{\log^{3\Cp} n}{n} ,  \height + \frac{\log^{3\Cp} n}{n} \bigg]\bigg)+o(1),
\end{align*} 
where we used the definition of $X_\al$ and \eqref{eqn:PF} in the last estimate.

Applying the bound \eqref{joint.ub} of \Cref{prop:joint.gen} with $\mom=1$, $J_1=[\height-n^{-1}\log^{3\Cp}n,\height+n^{-1}\log^{3\Cp}n]$, $J_1'=[-\pi,\pi]$, and $\lam=1$, say (with a single point $x_\al$ being trivially $\lam$-spread), we have 
\[
\P \bigg(X_\al\in  \bigg[\height - \frac{\log^{3\Cp} n}{n} ,  \height + \frac{\log^{3\Cp} n}{n} \bigg]\bigg) 
\ll\frac{\log^{3\Cp}n}{nN}
\]
for each $\al$ with $x_\al\notin \badarcs(\smooth)$, as well as the same bound for the event with $X_\al$ replaced by $-X_\al$. 
From the union bound and summing over $\al$ we conclude
\[
\bigg|\P\Big(m_n > \frac{\height}{n}\Big) - \P\Big(\Points^\sharp([-\height,\height])=0\Big) \bigg| 
\ll \frac{\log^{3\Cp}n}{n} +o(1) = o(1)
\]
as desired.

\section{Proof of  \Cref{thm:smallball}}
\label{sec:smallball}
Fix $\bs t$ as in the theorem statement. 
Recall the notation $S_n=S_n(\bs t)$ (we henceforth suppress $\bs t$) and $\bs w_j$ from \eqref{def:Snt} and \eqref{def:wj}.
Let $t_0=\delta^{-1}$  and  let $\phi_j$ denote the characteristic function of $\xi_j\bs w_j$. By a standard 
\revised{consequence of Esseen's inequality (see e.g.\ \cite[Lemma 7.17]{TaVu:book} and its proof)}
we can bound the small ball probability by 
$$\P(\frac{1}{\sqrt{2n+1}} S_n \in B(w,\delta) ) \le C_\mom   (\frac{n}{t_0^2})^{4\mom /2} \int_{\R^{4\mom }} 
\revised{\bigg|}\prod_{j=-n}^n \phi_j(u) 
\revised{\bigg|}
e^{-\frac{n \|u\|_2^2}{2 t_0^2}} du =: J_1+J_2+J_3,$$
where in $J_1, J_2, J_3$ the integral is restricted to the ranges  $\|u\|_2 \le r_0 =O(1)$, $r_0 \le \|u\|_2 \le R=n^{K_*}$, and $\|u\|_2>R$, respectively for $K_*>0$ to be chosen sufficiently large.

For $J_1$, from \eqref{Phi.gauss} and \eqref{gauss.LB} below we can bound
\[
\bigg|\prod_{j=-n}^n \phi_j( u)\bigg| \le \exp\bigg(-c \inf_{a_1\le |a| \le a_2} \sum_j \| a \langle \wvec_j, u/2\pi \rangle\|_{\R/\Z}^2 \bigg).
\]
Thus, if $r_0$ is sufficiently small, then we have $\|a \langle \wvec_j, u / 2\pi \rangle\|_{\R/\Z} = |a|\|\langle \wvec_j, u/ 2\pi  \rangle\|_2$, and so from \Cref{lem:cov} we have 
$$\sum_j \| a \langle \wvec_j, u/2\pi \rangle\|_{\R/\Z}^2
\ge c' n\|u\|_2^2\min(\lam,1)^{6\mom-3}.$$
Hence
\begin{align*}
J_1 &=  C_\mom  (\frac{n}{t_0^2})^{2\mom} \int_{\|u\|_2 \le r_0} \prod_j \phi_j(u) e^{-\frac{n \|u\|_2^2}{2 t_0^2}} du \\
& \le  C_\mom  (\frac{n}{t_0^2})^{2\mom} \int_{\|u\|_2 \le r_0}  e^{-\frac{n \|u\|_2^2}{2 t_0^2} - c'n \|u\|_2^2\lam^{6\mom-3}} du\\
& = O_\mom(\frac{1}{\lam^{3\mom}( t_0^2 +1)^{2\mom}})= O_\mom(\lam^{-3\mom}\delta^{4\mom }).
\end{align*}

For $J_2$, recall by Theorem \ref{thm:char} that for $r_0 \le \|u\|_2 \le R=n^{K_*}$ we have
$$ |\prod_{j=-n}^n  \phi_j( u)| =O(e^{-\log^2 n}).$$
Thus 
\begin{align*}
J_2 &=  C_\mom  (\frac{n}{t_0^2})^{2\mom} \int_{r_0 \le \|u\|_2 \le R} \prod_{j=-n}^n \phi_j(u) e^{-\frac{n \|u\|_2^2}{2 t_0^2}} du \\
& \le  C_\mom   (\frac{n}{t_0^2})^{2\mom} \int_{r_0 \le \|u\|_2 \le R}  e^{-\log^2 n}  e^{-\frac{n \|u\|_2^2}{2 t_0^2}}du\\
&\ll_\mom n^{O_{\mom,K_*}(1)} e^{-\log^2 n}  \ll_{\mom,K} e^{-\frac12\log^2 n} .
\end{align*}

For $J_3$, we have
\begin{align*}
J_3 &=  C_\mom (\frac{n}{t_0^2})^{4\mom/2} \int_{ \|u\|_2 \ge n^{K_*}} \prod_{j=-n}^n \phi_j(u) e^{-\frac{n \|u\|_2^2}{2 t_0^2}} du = O_\mom(e^{-n})
\end{align*}
for $K_*$ sufficiently large.

\section{Proof of \Cref{thm:box}}
\label{sec:box}

For the proof we make use of a quantitative Edgeworth expansion for the distribution of $S_n=S_n(\bs t)$ (we will suppress the dependence of $S_n$ on $\bs t$ in much of what follows).
Our treatment is similar to \cite{DNN}. 
Let
\begin{equation}\label{eqn:V_n}
V_n :=\frac{1}{2n+1} \sum_{j=-n}^n \bs w_j \bs w_j^\tran.
\end{equation}
be the covariance matrix of $S_n/\sqrt{2n+1}$.
Let $\widetilde Q_n$ denote the distribution of $S_n/\sqrt{2n+1} $, and  let $\widetilde Q_n(x)$ denote the cumulative distribution function for this distribution. 
The theorem below shows that $\widetilde Q_n$ is asymptotically $\widetilde{Q}_{n,\infty}$, where
\begin{equation}\label{e.Q_nl}
\widetilde{Q}_{n,\ell} := \sum_{r=0}^{\ell-2} n^{-r/2} T_r(-\Phi_{0,V_n}, \{\overline{\chi}_\nu\}),\qquad \ell\ge 2,
\end{equation}
for (signed) measures $T_r(-\Phi_{0,V_n}, \{\overline{\chi}_\nu\})$ to be defined below. For convenience, the density of $\widetilde Q_{n,\ell}$ is denoted by $Q_{n,\ell}$ while the density of $\widetilde Q_n$ is denoted by $Q_n$.

Let $W$ be the standard Gaussian vector in $\R^{4\mom}$. For any covariance matrix $V$,  $V^{1/2}W$ is the Gaussian random vector in $\R^{4\mom}$ with mean zero and covariance $V$. Let $\phi_{0,V}$ denote the density of its distribution and let $\Phi_{0,V}$ denote the cumulative distribution function. If $V$ is the identity matrix then we simply write $\phi$ and $\Phi$, respectively.  Recall that the cumulants of a random vector $X$ in $\R^{4\mom}$ are the coefficients in the following formal power series expansion 
\begin{equation}\label{e.cumulant}
\log \E [ e^{z \cdot X}] = \sum_{\nu\in \BN^d} \frac{\chi_{\nu} z^\nu}{|\nu|!} 
\,,\qquad  z\in \C^{4\mom}.
\end{equation}
From the independence of the random coefficients $\xi_j$, it follows that the cumulants of $S_n$ are the sum of the corresponding cumulants of $\xi_j\bs w_j$, which in turn are polynomials in the moments of $\xi$ and the entries of $\bs w_j$.  Let $\overline{\chi}_\nu := \chi_{\nu}(S_n)/(2n+1)$, which is the average of cumulants of $\xi_j\bs w_j,  -n\le j\le n$.  

Note that cumulants  of $V_n^{1/2}W$ match the cumulants of $S_n/\sqrt{2n+1}$ for any $|\nu|\le 2$, while the higher order cumulants of the Gaussian vector $V_n^{1/2}W$ vanish. Therefore,
\begin{eqnarray*}
\log \E [ e^{z \cdot  (S_n/\sqrt{2n+1} )}]  
&=&   \log  \E [e^{z\cdot (V_n^{1/2}W)}]  +   \sum_{\nu\in \BN^d: |\nu|\ge 3}  (n\overline{\chi}_{\nu})  \frac{z^\nu}{|\nu|!}  n^{-|\nu|/2} \\ 
&=& \log  \E [e^{z\cdot V_n^{1/2}W}] +    \sum_{\ell\ge 1} (\sum_{\nu\in \BN^d: |\nu|=\ell+2}  \overline{\chi}_{\nu} \frac{z^\nu}{|\nu|!})  n^{-\ell/2}.
\end{eqnarray*}
Letting $\overline\chi_\ell(z) = \ell!\sum_{\nu\in \BN^{4\mom}: |\nu|=\ell}  \overline{\chi}_{\nu} z^\nu / |\nu|!$ for all $z\in \C^{4\mom}$, we obtain
\begin{eqnarray*}
\E [ e^{z \cdot (S_n/\sqrt{2n+1} )}]/ \E [ e^{z \cdot   V_n^{1/2}W}] 
&=&  \exp[\sum_{\ell\ge 1}  \frac{\overline{\chi}_{\ell+2}(z)}{(\ell+2)!}   n^{-\ell/2}] \\
&=&  \sum_{m\ge 0} \frac 1{m!} \Big(\sum_{\ell\ge 1}  \frac{\overline{\chi}_{\ell+2}(z)}{(\ell+2)!}   n^{-\ell/2}\Big)^m\\
 &= & \sum_{\ell\ge 0}  \wt T_\ell n^{-\ell/2},
\end{eqnarray*}
where $\widetilde T_\ell$ is obtained by grouping terms of the same order $n^{-\ell/2}$. 
It is clear  that $\widetilde T_{\ell}$ depends only on $z$ and the average cumulants $\overline{\chi}_{\nu}, |\nu|\le \ell+2$. We will write $\widetilde T_\ell(z, \{\overline{\chi}_\nu\})$ to stress this dependence.  Replacing $z$ by $iz$, we obtain the following expansion for the characteristic function of $S_n/\sqrt{2n+1}$:
\begin{eqnarray*}
\E [ e^{iz \cdot (S_n/\sqrt{2n+1} )}] 
&=&   \E [ e^{iz \cdot   V_n^{1/2}W}] \sum_{\ell\ge 0}  \widetilde T_\ell (iz, \{\overline{\chi}_\nu\})n^{-\ell/2}.
\end{eqnarray*}
Next, let 
$D=(D_1,\dots, D_{4\mom})$ 
be the partial derivative operator and let $\widetilde{T}_{\ell}(-D, \{\overline{\chi}_\nu\})$ be the differential operator obtained by formally replacing all occurences of $iz$ by $-D$ inside $\widetilde T_\ell (iz, \{\overline{\chi}_\nu\})$.  
We define the signed measures $T_{\ell}(-\Phi_{0,V_n}, \{\overline{\chi}_\nu\})$ in \eqref{e.Q_nl} to have the following density with respect to the Lebesgue measure:
$$T_{\ell}(-\phi_{0,V_n},  \{\overline{\chi}_\nu\})(x):=  \Big(\widetilde{T}_{\ell}(-D, \{\overline{\chi}_\nu \})\phi_{0,V_n}\Big)(x).$$

The following result gives a quantitative comparison between $\widetilde Q_n$ and $\widetilde{Q}_{n,\ell}$; cf.\ also \cite[Theorem 4.1]{DNN}. 
For convenience of notation, for each $\ell>0$, let 
$$\rho_\ell := \frac{1}{n} \sum_{-n\le j\le n}  \|\bs w_j\|_2^\ell\cdot \E |\xi|^\ell.$$ 
Thus 
$\rho_\ell=O_{\ell,\mom}(\E |\xxi|^\ell)=O_{\ell,\mom}(1)$ if $\xi$ is sub-Gaussian. To stay slightly more general, here we only assume that $\xxi$ has bounded moments up to some sufficiently large order.
For a given measurable function $f:\R^{4\mom}\to \R$, define 
$$M_\ell(f) := \sup_{x\in \R^{4\mom}} \frac{|f(x)|}{1+\|x\|_2^\ell}.$$

\begin{theorem}[Edgeworth expansion]
\label{thm:EW}
Assume $\E |\xxi|^{\ell+4\mom+1} <\infty$ for some  $\ell \ge 4$. Let $f:\R^{4\mom}\to \R$ be a measurable function such that $M_{\ell}(f)<\infty$.  Suppose that $\bs t=(t_1,\dots, t_\mom)$ is $n^\smooth$-smooth and $1$-spread for some $\smooth>0$. Then for any fixed $K_*>0$ and any $n^{-K_*}\le \eps\le 1$,
\begin{eqnarray*}
&& |\int f(x) d\widetilde Q_{n}(x) - \int f(x) d \widetilde{Q}_{n,\ell}(x) |\\
 &\le& C M_{\ell}(f) (n^{-(\ell-1)/2} + e^{-\log^2 n}  )+  \overline{\omega}_f(2\eps: \sum_{r=0}^{\ell+4\mom-2} n^{-r/2} T_r(-\phi_{0,V_n}: \{\overline{\chi}_\nu\})
 \end{eqnarray*}
where  for a density $\phi$,
$$\overline{\omega}_f(\eps:\phi) = \int (\sup_{y\in B(x,\eps)} f(y) -\inf_{y\in B(x,\eps)} f(y)) d\phi(x),$$ 
for some $C=C(\{\rho_k, k\le \ell\}, \smooth,K_*)>0$.
\end{theorem}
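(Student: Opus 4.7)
The strategy follows the standard Fourier-inversion approach to Edgeworth expansions (as in \cite{DNN}), in which the classical Cram\'er-type condition on the characteristic function is now supplied by Theorem \ref{thm:char}. Fix a smooth nonnegative bump $K_\eps$ on $\R^{4m}$ with $\int K_\eps = 1$, supported in $B(0,\eps)$, whose Fourier transform is supported in $B(0,\eps^{-1})\subseteq B(0,n^{K_*})$. Decompose
\[
\int f\, d(\wt Q_n - \wt Q_{n,\ell}) \;=\; \int (f - f*K_\eps)\, d(\wt Q_n - \wt Q_{n,\ell}) \;+\; \int f\, d\bigl((\wt Q_n - \wt Q_{n,\ell})*K_\eps\bigr).
\]
The first piece is an oscillation error: pointwise $|f - f*K_\eps|(x) \le \sup_{y\in B(x,\eps)} f(y) - \inf_{y\in B(x,\eps)} f(y)$, and the total variation of $\wt Q_n - \wt Q_{n,\ell}$ is dominated to leading order by the density $\sum_{r=0}^{\ell+4m-2} n^{-r/2} T_r(-\phi_{0,V_n};\{\overline\chi_\nu\})$. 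The additional $4m$ Edgeworth terms beyond those appearing in $\wt Q_{n,\ell}$ itself are needed to control the mass of $\wt Q_n$ pointwise at scale $\eps$ (of volume $\eps^{4m}$), and it is precisely here that the moment hypothesis $\E|\xi|^{\ell+4m+1}<\infty$ enters. After accounting for the two convolutions with $K_\eps$, this produces the $\overline\omega_f(2\eps;\,\cdot)$ contribution in the stated bound.

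For the second (smoothed) piece, Parseval gives
\[
\int f\, d\bigl((\wt Q_n - \wt Q_{n,\ell})*K_\eps\bigr) \;=\; \int_{\|u\|_2\le n^{K_*}} \wh f(-u)\, \wh K_\eps(u)\, \bigl(\wh{\wt Q_n}(u) - \wh{\wt Q_{n,\ell}}(u)\bigr)\, du,
\]
which we split at $\|u\|_2 = c_0\sqrt n$. In the low-frequency range $\|u\|_2 \le c_0\sqrt n$, a Taylor expansion of $\log \E e^{iu\cdot S_n/\sqrt{2n+1}}$ about $0$ (the computation that defines the Edgeworth polynomials $\wt T_r$), together with the uniform lower bound $V_n \succeq c I$ from Lemma \ref{lem:cov} (valid since $\bs t$ is $1$-spread), gives
\[
\bigl|\wh{\wt Q_n}(u) - \wh{\wt Q_{n,\ell}}(u)\bigr| \;\le\; C\, n^{-(\ell-1)/2}\, P(\|u\|_2)\, e^{-c\|u\|_2^2}
\]
for an explicit polynomial $P = P(\ell,m)$; this is carried out exactly as in the proof of \cite[Theorem 4.1]{DNN}, the only modification being the ambient dimension $4m$ in place of $8$. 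Integration against $\wh f\,\wh K_\eps$ contributes $O(M_\ell(f)\, n^{-(\ell-1)/2})$. In the intermediate range $c_0\sqrt n < \|u\|_2 \le n^{K_*}$ we bound the two characteristic functions separately: the Gaussian factor in $\wh{\wt Q_{n,\ell}}$ (again using $V_n \succeq c I$) yields $|\wh{\wt Q_{n,\ell}}(u)| \le e^{-cn}$, while rewriting $\wh{\wt Q_n}(u) = \E e(\langle u/(2\pi\sqrt{2n+1}),\, S_n\rangle)$ and applying Theorem \ref{thm:char} with $\bs x = u/(2\pi\sqrt{2n+1})$ --- whose Euclidean norm lies in $[n^{-1/8}, n^{K_*}]$ after a minor adjustment of $c_0$ and $K_*$ --- gives $|\wh{\wt Q_n}(u)| \le e^{-\log^2 n}$. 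Since $|\wh f\,\wh K_\eps|$ grows at most polynomially on $B(0,n^{K_*})$, this region contributes $O(M_\ell(f)\, e^{-\log^2 n})$.

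The principal technical obstacle is the Edgeworth bookkeeping in the low-frequency step: verifying that the density $\sum_{r=0}^{\ell+4m-2} n^{-r/2} T_r(-\phi_{0,V_n};\{\overline\chi_\nu\})$ truly dominates the mass of $\wt Q_n$ on $\eps$-balls, and that the polynomial $P(\|u\|_2)$ arising in the Taylor remainder is integrable against the Gaussian factor $e^{-c\|u\|^2}$ with the claimed dependence on $\{\rho_k\}_{k\le \ell}$. These are standard computations once the non-degeneracy input $V_n \succeq c I$ from Lemma \ref{lem:cov} is in hand, and follow the template of \cite{DNN} for $m=2$. The genuinely new ingredient specific to our setting is the super-polynomial decay of $\wh{\wt Q_n}$ on the intermediate window $[c_0\sqrt n,\, n^{K_*}]$ supplied by Theorem \ref{thm:char}, which substitutes for the classical Cram\'er condition and is what permits the expansion to hold down to the polynomial scale $\eps \ge n^{-K_*}$.
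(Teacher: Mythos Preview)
Your high-level strategy---smooth by a kernel, separate an oscillation error yielding $\bar\omega_f$, and analyze the smoothed piece in Fourier space split at $\|u\|_2\asymp\sqrt n$ with Theorem~\ref{thm:char} controlling the intermediate window---matches the paper's proof. However, two steps as written do not go through.

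First, a nonzero $K_\eps$ cannot be supported in $B(0,\eps)$ \emph{and} have Fourier transform supported in $B(0,\eps^{-1})$: this contradicts the uncertainty principle. The paper (following \cite{BhRa:book3}) instead takes a kernel supported in $B(0,\eps)$ with sub-exponentially decaying transform, $|D^\alpha\wh K_\eps(\eta)|=O(e^{-(\eps\|\eta\|)^{1/2}})$ for $|\alpha|\le \ell+d+1$, and handles the tail $\|\eta\|\gg n^{K_*}$ directly via this decay.

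Second, and more seriously, your Parseval step invokes $\wh f$, but $f$ is only assumed measurable with $|f(x)|\le M_\ell(f)(1+\|x\|^\ell)$; there is no reason for $\wh f$ to exist as a function, and the claim that $|\wh f\,\wh K_\eps|$ grows at most polynomially is unjustified. The paper avoids $\wh f$ entirely: it first bounds $|\int f\,d\wt H_n|$ by $M_\ell(f)\int(1+\|t\|)^\ell |H_n*K_\eps|(t)\,dt$ plus the oscillation term, then converts the polynomial weight $(1+\|t\|)^\ell$ into derivatives $D^\alpha$ of $\wh H_n\cdot\wh K_\eps$ via Fourier inversion. In the intermediate window this produces products $\prod_{j:\gamma_j=0}\phi_j(\eta/\sqrt n)$ with $O(1)$ factors removed, which are controlled not by Theorem~\ref{thm:char} but by its truncated variant Theorem~\ref{thm:char'}. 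This derivative bookkeeping is the reason the kernel in the first point needs controlled $D^\alpha\wh K_\eps$ and not merely decay of $\wh K_\eps$ itself.
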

 

\begin{proof}[Proof of Theorem \ref{thm:EW}] This follows from \cite[Section 4]{DNN} (which in turns follows the approach of \cite{BhRa:book3} with some important modifications, see also \cite{BCP:variance}). For completeness we sketch the proof below. Let $d=4m$. For convenience, we assume that $\eps =n^{-K_*}$ and denote
$$\widetilde H_n = \widetilde Q_n - \widetilde Q_{n,\ell},$$
and let $H_n$ be its density. As usual the characteristic function of $H_n$ is   $\widehat {H_n}(\eta) =  \int_{\R^d} e^{i t \cdot \eta} \widetilde H_n(dt)$.

Let $\widetilde K$ be a probability measure supported inside the unit ball $B(0,1)=\{x\in \R^d: \|x\| \le 1\}$ (whose density is denoted by $K$) such that its characteristic function $\widehat K(\eta)$ satisfies
\begin{equation}\label{e.Keps}
|D^\alpha \widehat K(\eta)|  = O( e^{-\|\eta\|_2^{1/2}}), \quad |\alpha| \le \ell+d+1.
\end{equation}
Such a measure could be constructed using elementary arguments, see for instance \cite[Section 10]{BhRa:book3}. We then let $\widetilde K_\epsilon$ be the $\epsilon$-dilation of $K$, namely $\widetilde K_\epsilon(A) = \widetilde K(\epsilon^{-1}A)$ and $\epsilon^{-1}A := \{x/\epsilon: x\in A\}$ 
for all measurable $A$. Some simple computation yields
\begin{eqnarray*}
|\int f(y) d\widetilde H_n(y)| & \le & C_\ell M_{\ell}(f) \int (1+\|t\|_2)^{\ell} |H_n*K_\epsilon|(t)dt  +  \bar{\omega}_f(2\eps: |\widetilde Q_{n,\ell}|)\\
&= & O \Big(\max \{ \int |D^\alpha (\widehat {H_n})(\eta) D^{\beta} (\widehat {K_\epsilon})(\eta)| d\eta: \ \ |\alpha|+|\beta|\le \ell + d + 1\}\Big).
\end{eqnarray*}
Following \cite{BhRa:book3} (see \cite[Corollary 4.3]{DNN} for a different proof) we can show that for some $c_1>0$ sufficiently small we have
\begin{eqnarray*}
\int_{\|\eta\|_2\le c_1 \sqrt n}|D^\alpha \widehat {H_n}(\eta) D^{\beta} \widehat {K_\epsilon}(\eta)| d\eta 
&=& O\Big(\int_{\|\eta\|_2\le c_1 \sqrt n}|D^\alpha \widehat {H_n}(\eta) | d\eta\Big)\\
&=&  O ( n^{-(\ell+d-1)/2}).
\end{eqnarray*}
It thus remains to consider the range $\|\eta\|_2\ge c_1 \sqrt n$. We use triangle inequality to estimate (where $Q_n$ is the density of $\wt{Q}_n$)
\begin{align*}
\int_{\|\eta\|_2\ge c_1 \sqrt n} |D^{\alpha} \wh{H}_n(t) D^{\beta} \wh{K}_\eps |d \eta  &\le \int_{\|\eta\|_2 \ge c_1 \sqrt n} |D^{\alpha} \wh{Q}_n(t) D^{\beta} \wh{K}_\eps |d\eta \\
&+\int_{\|\eta\|_2 \ge c_1\sqrt  n} |D^{\alpha} ( \sum_{r=0}^{\ell-2+d} n^{-r/2} P_r(i\eta: \{\chi_{\nu,n}\})) \exp(-1/2\langle \eta,B_n \eta \rangle) |d \eta,
\end{align*}
where $B_n^2=V_n^{-1}$ (defined in \eqref{eqn:V_n}.)

The second term can be controlled by $O(e^{-cn})$  thanks to the Gaussian decay of $\exp(-1/2\langle \eta,B_n \eta \rangle)$.

Let $\phi_i (\eta) = \E e^{ i \eta \cdot \bw_i}$. Then for $|\alpha|\le \ell+d+1$ we have  $D^\alpha_{\eta}(\phi_i(\eta/\sqrt n)) = n^{-|\alpha|/2} O(\E\|X_{n,i}\|_2^{|\alpha|}) = O(1)$. Thus,
$$ |D^{\al} \widehat{Q}_n(\eta)|  = |D^{\alpha}(\prod_{i=1}^n \phi_i(\frac{\eta}{\sqrt n}))| = O(\sum_{\gamma_1+\dots+\gamma_n  = \al }   |\prod_{i=1, \gamma_i =0}^n \phi_i(\frac{\eta}{\sqrt{n}})|),$$
while we also have $|D^\beta \wh{K}_{\eps}(\eta)| = O(\eps^{|\beta|} e^{-(\eps \|\eta\|_2)^{1/2}}) = O(e^{-(\eps \|\eta\|_2)^{1/2}})$. Thus,  it remains to control, for each $(\gamma_1,\dots,\gamma_n)$ with $|\gamma_1|+\dots +|\gamma_n| \le \ell+d+1$ and each $r>0$ independent of $n$:
\begin{align*}
J_\gamma(n,\eps) &= \int_{\|\eta\|_2 \ge r\sqrt n}  |\prod_{i=1, \gamma_i =0}^n \phi_i(\frac{\eta}{\sqrt n})|  e^{-(\eps  \|\eta\|_2)^{1/2}}  d\eta\\
&= n^{d/2} \int_{\|\eta\|_2\ge r} |\prod_{i=1, \gamma_i =0}^n \phi_i(\eta)| e^{-(n^{-K_\ast +1/2}\|\eta\|_2)^{1/2}} d\eta.
\end{align*}

Clearly it suffices to consider $r \le \|\eta\|_2 \le n^{K_\ast-1/2 + \tau}$ because the integral for $\|\eta\|_2 \ge n^{K_\ast-1/2 + \tau}$ is extremely small. Again, because $\al$ is fixed, by throwing away from the set $\{\Bw_i\}$ a fixed number of elements, let us assume that $\al=0$ for simplicity \footnote{In the general case $\al \neq 0$ we apply Theorem \ref{thm:char'} instead of Theorem \ref{thm:char}.}. To this end, by Theorem \ref{thm:char} for sufficiently large $n$ we have
 $$|\prod_i \phi_i(  \eta)| \le e^{-\log^2n}.$$
 Thus we just shown that, with $\eps= n^{-K_\ast}$ we have $J_\gamma(n,\eps)= O(e^{- \log^2 n})$, completing the proof.
\end{proof}

We turn now to the proof of \Cref{thm:box}.
We follow \cite[Section 5]{DNN} with some slight modifications. 
\revised{We are free to assume $K$ is larger than any fixed constant. By approximating $Q$ with a union of smaller boxes with disjoint interiors it suffices to establish the claim for boxes of the form
$Q=w+ B_m(\bs \delta)$ with $B_m(\bs \delta):= \prod_{i=1}^4[-\delta_i,\delta_i]^m\subset \R^{4m}$
for arbitrary $\delta_i\in [n^{-2K},1]$ for $1\le i\le 4$ (assuming $K\ge1$, say).
}
Let $\eta, \eps>0$ to be chosen later, and towards an application of \Cref{thm:EW} we fix some 
\revised{$K_*>2K$.}
In the sequel we abbreviate 
\revised{$\delta:= n^{-2K}$.}
We let 
\[
g:= \frac{1}{16\delta_1\delta_2\delta_3\delta_4} 1_{w+B_\mom(\bs \delta)}
\]
be the $L^1$-normalized indicator for the box $w+B_\mom(\bs \delta)\subset \R^{4\mom}$.
For $1\le i\le 4$ let $\varphi_{i,\eta}: \R\to [0,1]$ be a $C^\infty(\R)$ function with support inside $[-\delta_i, \delta_i]$ such that 

(i)  $\varphi_{i,\eta}(x) = \delta_i^{-1}$ for $|x|\le \delta_i(1-\eta)$, and

(ii) $|\varphi_{i,\eta}^{(k)} (x)|  = O_k(\delta_i^{-(k+1)}\eta^{-k})$ for any $k\ge 0$,\\
and set
\[
f(\bs x) = \prod_{r=1}^\mom \prod_{i=1}^4 \varphi_{i,\eta}(w^i_r + x^i_r)
\]
where we write $\bs w=(w^1,\dots,w^4), \bs x=(x^1,\dots, x^4)\in \R^{4\mom}$.
We have
 $$\|\nabla f(\bs x)\|_2  \ll_\mom  \frac 2 {\delta^{4\mom +1} \eta}$$
uniformly in $\bs x$.
Recall that $\bar{\omega}_f(\eps:\phi) = \int (\sup_{y\in B(x,\eps)} f(y) -\inf_{y\in B(x,\eps)} f(y)) \phi(x)dx$, and $\phi$ is the density of a Gaussian vector.   Consequently, for any polynomial $p(x)$ with bounded degree and bounded coefficients we have
\begin{equation*} 
\bar{\omega}_{f} (\eps:  p(x)\phi_{0,V_n}(x)) = O (\eta^{-1}\delta^{-4\mom-1}  \eps),
\end{equation*}
where the implied constant depends on the eigenvalues of $V_n$, and on the degree and coefficients of $p$. 
In particular, the final error term in Theorem \ref{thm:EW} can be expressed as
$$\sum_{r=0}^{\ell+4\mom-2} n^{-r/2} T_r(-\phi_{0,V_n}: \{\overline{\chi}_\nu\})=p(x)\phi_{0,V_n}(x)$$
for some polynomial $p$ with degree at most $4\mom+\ell$ and coefficients bounded by the first $4\mom+\ell$ moments of $\xi$. Therefore
\begin{equation}\label{e.modcont}
\bar{\omega}_{f} (2\eps:  \sum_{r=0}^{\ell+4\mom-2} n^{-r/2} T_r(-\phi_{0,V_n}: \{\overline{\chi}_\nu\})) = O (\eta^{-1}\delta^{-4\mom-1}  \eps), 
\end{equation}
where the implied constant depends on the eigenvalues of $V_n$ and the moments up to order $O(\mom)$ of $\xxi$.

Recall the shorthand notation $\wt S:= S_n(\bs t)/\sqrt{2n+1}$ from \eqref{def:tStSg}, and that $\Gamma$ has the distribution of $\wt S$ with standard real Gaussians in place of the variables $\xi_j$. 
From Theorem~\ref{thm:smallball} and Corollary \ref{cor:smallball}, 
\begin{eqnarray*}
 \big|  \E f(\wt S) - \E g(\wt S) \big|    
&\le& (\frac{C}{\delta})^{4\mom} \sum_{r=1}^\mom  \P(||\Re P_{n}(s_r)|-\delta_1|\le \eta \delta_1)+\P(|\Re P_{n}'(s_r)|-\delta_2|\le \eta \delta_2)  \\
& +&  \P(|| \Im P_{n}(s_r)|-\delta_3|\le \eta \delta_3)+\P(|\Im P_{n}'(s_r)|-\delta_4|\le \eta \delta_4)\Big) \\
&\ll_\mom &(\frac{C}{\delta})^{4\mom} \eta  
\end{eqnarray*}
\revised{where we used in the last line that $\delta_i\le1$ for each $1\le i\le 4$.}
\revised{Recalling the notation $M_\ell(f)$ from \Cref{thm:EW}, we have $M_\ell(f) \le \|f\|_\infty = O(1/\delta)^{4\mom}$ for any $\ell\ge0$.}
 By Theorem \ref{thm:EW} and \eqref{e.modcont} (with $\ell =16\mom K+3$), after keeping the first term of the expansion, and by the triangle inequality we have
\begin{align*}
\Big |\E f(\wt S)  -  \E f( \Gamma)\Big | 
 &\le  \Big |\int f  (x) \sum_{r=1}^{\ell-2} n^{-r/2} T_r(-\phi_{0,V_n}(x), \{\overline{\chi}_\nu\}) \Big | \\
 & + M_{\ell}(f) O\Big(n^{- 8\mom K-1} + e^{-\log^2 n}  \Big) + \bar{\omega}_{f} (2\eps:  \sum_{r=0}^{4\mom +1} n^{-r/2} T_r(-\phi_{0,V_n}: \{\overline{\chi}_\nu\})) \\
 &=    O(n^{-1/2}) + (C/\delta)^{4\mom } O (n^{-8\mom K-1} + e^{-\log^2 n}  ) + O ((C/\delta)^{4\mom +1}\eta^{-1} \eps ),
 \end{align*}
 where we used the fact that $|\int f(x) T_r(-\phi_{0,V_n}(x), \{\overline{\chi}_\nu\})|=O(1)$ with the implied constant depending on the moments of $\xxi$ up to order $r$ and on the implicit constant from (ii) of $\varphi$.
In particular, the above is also true for the Gaussian case. Consequently, again by the triangle inequality
\begin{eqnarray*}
  |\E  g(\wt S)  -  \E g(\Gamma) | 
&\le&    |\E  g(\wt S) - \E f(\wt S)|  
+ |\E f(\Gamma) - \E g(\Gamma)| 
+  |\E f  (\wt S) - \E f  (\Gamma) |    \\
&\ll_m&  n^{-1/2}  + (\frac{1}{\delta})^{4\mom } (n^{-8\mom K-1} + e^{-\log^2 n}  + \delta^{-1}\eta^{-1}\eps+ \eta 
)  = O(n^{-1/2}),
\end{eqnarray*}
where we took $\eta=\eps^{1/2}$ and $\eps=n^{-K_*}$ with $K_*$ sufficiently large compared to $K$.

\section{Proof of \Cref{thm:char}}
\label{sec:char}

We assume throughout this section that $n$ is sufficiently large depending on $\mom,\smooth,K_*$ and the sub-Gaussian constant for $\xi$.
We first recall a definition and fact from \cite{TaVu:circ}.
For a real number $w$ and a random variable $\xi$, define the $\xi$-norm of $w$ as
\[
\|w\|_\xi := ( \E \|w(\xi - \xi')\|_{\R/\Z}^2)^{1/2},
\]
where $\xi'$ is an iid copy of $\xi$. 
For instance, if $\xi$ has the Rademacher distribution $\pr(\xi=\pm1) = 1/2$, then $\|w\|_\xi^2 = \|2 w\|_{\R/\Z}^2/2$.
For any real number $w$ we have
\[
|\E e(w \xi)| \le \exp(-c\|w/2\pi\|_\xi^2)
\]
for an absolute constant $c>0$.

Now with $\charr_j: \R^{4\mom}\to \C$ the characteristic function of $\xi_j \bs w_j$, we have
\begin{align}	\label{Phi.gauss}
\Big|\E e\big(  \langle S_n(\bs t), \bs x\rangle\big)\Big|
&= |\prod_j \charr_j(\bs x)|
= \prod_j | \E e(\xi_j \langle \bs w_j, \bs x\rangle)|
\le \exp ( - c\sum_j \|\langle \bs w_j , \bs x/2\pi\rangle \|_\xi^2).
\end{align}
Furthermore, as $\xi$ is sub-Gaussian and of unit variance, there exist positive constants $a_1,a_2, c>0$ depending only on the sub-Gaussian moment of $\xi$ such that $\pr(a_1<|\xi-\xi'|<a_2) \ge c$, and so
\begin{align}	\label{gauss.LB}
\sum_j \|\langle \bs w_j, \bs x/2\pi\rangle\|_\xi^2 
= \E \sum_j \|\langle \bs w_j , \bs x/2\pi\rangle (\xi-\xi')\|_{\R/\Z}^2 \ge c \inf_{a_1\le |a|\le a_2} \sum_j \|a\langle \bs w_j, \bs x/2\pi\rangle\|_{\R/\Z}^2 .
\end{align}
It hence suffices to show that $\sum_j \|a\langle\bs w_j, \bs x/2\pi\rangle\|_{\R/\Z}^2\gg \log^3n$ uniformly for $|a|\in [a_1,a_2]$.
Fixing an arbitrary such $a$, since $a_1,a_2\asymp 1$ we will abuse notation and absorb $a$ into the definition of $\bs x$.
Recalling \eqref{def:wj}, 
since $\bs w_j + \bs w_{-j} = 2( 0, 0 ,\bs b_j, -(j/n)\bs a_j)$ and $\bs w_j - \bs w_{-j} = 2(\bs a_j, (j/n)\bs b_j, 0, 0)$, 
for $\bs x= (\bs x^1, \bs x^2,\bs x^3,\bs x^4)\in \R^{4\mom}$
and each $0\le j\le n$,
we have from the triangle inequality that
\begin{align*}
\|\langle \bs w_j, \bs x\rangle \|_{\R/\Z}^2 
+\|\langle \bs w_{-j}, \bs x\rangle \|_{\R/\Z}^2 
&\ge \frac12 \max\big\{  \| \langle \bs w_j + \bs w_{-j} , \bs x\rangle \|_{\R/\Z}^2\, , \, 
\| \langle \bs w_j - \bs w_{-j} , \bs x\rangle \|_{\R/\Z}^2 \big\}\\
&= 
2\max\Big\{  \| \langle  \bs b_j, \bs x^3 \rangle - (j/n)\langle  \bs a_j, \bs x^4\rangle  \|_{\R/\Z}^2 
\,,\,\|  \langle \bs a_j, \bs x^1 \rangle + (j/n) \langle \bs b_j, \bs x^2\rangle \|_{\R/\Z}^2 \Big\}.
\end{align*}
Recalling our assumption $\|\bs x\|_2\ge n^{-1/8}$, we will assume $\|\bs x^3\|_2^2 + \|\bs x^4\|_2^2 \ge \frac12 n^{-1/4}$; the complementary case that $\|\bs x^1\|_2^2 + \|\bs x^2\|_2^2 \ge \frac12 n^{-1/4}$ can be handled by the same argument. 
Fix now a vector $(\bs y, \bs y')\in \R^{2\mom}$ satisfying
\[
n^{-1/8} \le \| (\bs y, \bs y')\|_2 \le n^{K_*}
\]
and denote
\begin{equation}	\label{def:psi}
\psi(j) = \psi(j; \bs t) := \langle \bs b_j, \bs y\rangle - (j/n) \langle \bs a_j, \bs y'\rangle 
=\sum_{r=1}^\mom y_r \cos(j t_r/n) - y'_r (j/n) \sin(jt_r/n)\,.
\end{equation}
With $(\bs y, \bs y')$ playing the role of $(\bs x^3, \bs x^4)$, to establish \Cref{thm:char} our task thus reduces to establishing the following:

\begin{prop}
\label{prop:psi.LB}
Let $\bs t=(t_1,\dots, t_r) \in \R^\mom$ be $n^{\smooth}$-smooth and $\spread$-spread for some $\smooth\in (0,1)$ and $\omega(n^{-1/8\mom})\le \spread<1$. 
Then
\[
\sum_{j=0}^n \|\psi(j)\|_{\R/\Z}^2 >  \log^4n .
\]
\end{prop}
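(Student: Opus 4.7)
I would argue by contradiction, assuming $\sum_{j=0}^n \|\psi(j)\|_{\R/\Z}^2 \le \log^4 n$.  Using $\|t\|_{\R/\Z}^2 \asymp 1-\cos(2\pi t)$, this is equivalent to the exponential sum lower bound $\Re\Sigma \ge n+1 - O(\log^4 n)$ for $\Sigma := \sum_{j=0}^n \exp(2\pi i \psi(j))$, so the task reduces to producing a nontrivial cancellation bound $|\Sigma| \le n+1-\omega(\log^4 n)$.  Working on the exponential-sum side is cleaner because the differencing operators we need have non-integer coefficients, which would not preserve the ``close to integer'' property directly.

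\textbf{Iterated twisted differencing.}  The key mechanism is an iterated twisted differencing of $\psi$, combined with classical Weyl differencing on $\Sigma$.  Introduce the twisted second-difference operator $T_{L,\theta}f(j) := f(j+L)+f(j-L)-2\cos(L\theta)f(j)$.  A direct trigonometric computation shows that $T_{L,\theta}$ annihilates $\cos(j\theta)$ and $\sin(j\theta)$; scales $\cos(j\theta'), \sin(j\theta')$ for $\theta'\neq\pm\theta$ by $2(\cos(L\theta')-\cos(L\theta))$; and, applied to $(j/n)\sin(j\theta')$, produces $2(j/n)\sin(j\theta')(\cos(L\theta')-\cos(L\theta))+(2L/n)\cos(j\theta')\sin(L\theta')$ (and similarly for $(j/n)\cos(j\theta')$).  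Thus $T_{L,\theta}$ eliminates frequency $\theta$ while preserving the degree-$\le 1$-in-$(j/n)$ trigonometric structure at every other frequency, up to constant-in-$j$ corrections of size $O(L/n)$.  Composing $T_{L_1,t_1/n}\circ\cdots\circ T_{L_{m-1},t_{m-1}/n}$ therefore reduces $\psi$ to a residual single-frequency sequence of the form $[\alpha+\beta(j/n)]\cos(jt_m/n)+[\gamma+\delta(j/n)]\sin(jt_m/n)$ with explicit coefficients accumulated through the $m-1$ differencings.

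\textbf{Choice of shifts and Kusmin--Landau finish.}  To keep amplitudes non-negligible, apply \Cref{lem:dilate} simultaneously to the pairs $(t_m\pm t_r)/n$ for $1\le r<m$: this yields shifts $L_i\asymp n/K$ (for some $K=n^{o(1)}$) such that $\|L_i(t_m\pm t_r)/(2\pi n)\|_{\R/\Z}\gg_m \lambda/K$, and hence $|\cos(L_it_m/n)-\cos(L_it_r/n)|\gg_m (\lambda/K)^2$.  The cumulative multiplicative factor after $m-1$ differencings is then $\gg_m\lambda^{O(m)}$, which under the hypothesis $\lambda\gg n^{-1/8m}$ is $\ge n^{-o(1)}$.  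On the exponential-sum side, classical Weyl differencing expresses $|\Sigma|^{2^{m-1}}$ in terms of a nested sum whose innermost phase matches (up to compatible error terms) the iterated twisted difference of $\psi$ analyzed above.  The remaining single-frequency sum is expanded via Jacobi--Anger into harmonics $e^{ikjt_m/n}$ with Bessel-function weights, and each resulting geometric sum is bounded by Kusmin--Landau: the $n^\kappa$-smoothness of $t_m$ gives $\|kt_m/(\pi n)\|_{\R/\Z}\gg n^\kappa/n$ for all small nonzero $k$, producing cancellation of size $n^{1-c\kappa}$, far in excess of the $\omega(\log^4 n)$ saving required.

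\textbf{Principal obstacles.}  I expect the main difficulties to be the simultaneous management of three issues.  (i) The constant-in-$j$ correction terms generated when a twisted difference interacts with the $j/n$ polynomial factor must be tracked carefully, so that across the $m-1$ steps they cannot fortuitously cancel the leading behaviour; this requires quantifying their relative sizes compared to the main $\lambda^{O(m)}$-amplitude.  (ii) The shifts $L_1,\dots, L_{m-1}$ must be chosen jointly so that all $m-1$ amplitude factors $|\cos(L_i t_m/n)-\cos(L_i t_r/n)|$ are simultaneously non-negligible, an iterative extension of the differencing argument in the proof of \Cref{lem:cov}.  (iii) The Jacobi--Anger step generates harmonics $k$ potentially beyond the $n^\kappa$-smooth range; those large-$|k|$ contributions must be absorbed via the superexponential decay of Bessel functions $J_k$ for $|k|\gg$ amplitude, which caps the relevant harmonics at $|k|\lesssim n^{K_\ast}$ and then is compatible with the smoothness regime after truncation.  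Points (i) and (ii) are tightly coupled to the same circle of ideas used in \Cref{lem:cov}, while (iii) is a standard but nontrivial bookkeeping issue in oscillatory-integral estimates for amplitude-modulated trigonometric polynomials.
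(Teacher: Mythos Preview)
Your proposal contains a genuine gap at its central step. Classical Weyl (or van der Corput) differencing of $\Sigma=\sum_j e^{2\pi i\psi(j)}$ produces inner phases of the form $\psi(j+h_1+\cdots+h_{m-1})-\psi(j+h_1+\cdots+h_{m-2})-\cdots$, i.e.\ iterated \emph{ordinary} differences with integer $\pm1$ coefficients. These are \emph{not} the twisted differences $T_{L,\theta}\psi$ you analyze: the coefficient $2\cos(L\theta)$ is irrational, and there is no mechanism by which Weyl differencing introduces it. Concretely, an ordinary difference of $y_r\cos(jt_r/n)$ yields $-2y_r\sin((j+h/2)t_r/n)\sin(ht_r/(2n))$, which still oscillates at frequency $t_r/n$ for every $r$; no choice of $h$ annihilates a frequency. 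So after $m-1$ Weyl steps you still carry all $m$ frequencies, and the asserted ``innermost phase matches the iterated twisted difference'' is simply false. Since the twisted operator does not commute with $x\mapsto e^{2\pi i x}$, there is no way to transport your $T_{L,\theta}$ analysis of $\psi$ to an exponential-sum bound on $\Sigma$.

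A second, independent obstruction is in the Jacobi--Anger/Kusmin--Landau finish. The coefficients $y_r,y_r'$ range up to $n^{K_*}$, so even granting a reduction to a single frequency, the amplitude in $e^{2\pi i A\cos(jt_m/n)}$ can be polynomially large. Bessel decay of $J_k(2\pi A)$ only sets in for $|k|\gg |A|\sim n^{K_*}$, while $n^\kappa$-smoothness only controls $\|kt_m/(\pi n)\|_{\R/\Z}$ for $|k|\le n^\kappa+1$. The harmonics $n^\kappa<|k|\lesssim n^{K_*}$ are neither damped by Bessel decay nor controlled by smoothness, and each such geometric sum can be as large as $n$. Your obstacle (iii) acknowledges the issue but the proposed resolution does not cover this range.

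The paper's argument avoids both problems by working with $\|\psi(j)\|_{\R/\Z}$ directly rather than passing to $\Sigma$. The key device you are missing is an \emph{integer-rounding} step: one first finds (via Dirichlet) a small $q_0\le n^\kappa$ with $e_n(q_0t_r)\approx 1$ for all $r$, and shows that the nearest-integer sequence $p(j)$ to $\psi(j)$ satisfies $\Delta^k_{q_0}p(j)=0$ on a long interval for $k=O_{K_*,\kappa,m}(1)$. This forces $p$ to be polynomial along progressions, hence $\Delta^k_{\ell q_0}p(j)=0$ for \emph{all} $\ell$, which converts the fractional-part smallness into a genuine pointwise bound $|\Delta^k_{\ell q_0}\psi(j)|\ll\sum_i\|\psi(j+i\ell q_0)\|_{\R/\Z}$. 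Only then are twisted operators $D_{t_0}$ applied---to the small real-valued quantity $\Delta^k_{\ell q_0}\psi$, not to an exponential---to isolate a single frequency and derive $|1-e_n(\ell q_0 t_1)|\le n^{-c}$ for a range of $\ell$, contradicting smoothness. The polynomial size of $\|(\bs y,\bs y')\|$ is absorbed by choosing $k$ large enough in the factor $(1-e_n(q_0t_r))^k$, which is what your exponential-sum route lacks.
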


Turning to prove the proposition, we henceforth denote
\[
T:= \log^4n .
\]
In the remainder of this section we suppose towards a contradiction that 
\begin{equation}	\label{assumex}
\sum_{j=0}^n \|\psi(j)\|_{\R/\Z}^2 \le T\,.
\end{equation}

From \eqref{assumex} and Markov's inequality we have
\[
|\{ j\in [0,n]\cap \Z: \|\psi(j)\|_{\R/\Z} > 1/T\}| \le 2T^3
\]
and it follows that
there is an interval $J\subset [n]$ of length at least $n/ T^6$ such that 
\begin{equation}	\label{psij.bd}
\|\psi(j)\|_{\R/\Z} \le 1/T \qquad \forall \;j\in J.
\end{equation}
We henceforth fix such an interval $J=[n_1,n_2]$. 

Next we claim we can find $q_0\in \Z\cap [1,n^{\smooth}]$ and $s_1,\dots, s_\mom\in \R$ such that
\begin{equation}	\label{p0.claim}
q_0 t_r / 2\pi n - s_r \in \Z 
\end{equation}
and 
\begin{equation}	\label{s.claim}
\sum_{r=1}^\mom s_r^2 \le \mom n^{-2\smooth/\mom}.
\end{equation}
Indeed, considering the sequence of points $( \{ q t_1/2\pi n \} , \dots, \{qt_\mom/2\pi n\})\in [0,1]^\mom$ for $1\le q\le n^{\smooth}$, it follows from Dirichlet's principle that 
\[
\sum_{r=1}^\mom | \{ q_1 (t_r/2\pi n) \} - \{ q_2 (t_r/2\pi n)\} |^2 \le \mom n^{-2\smooth/\mom}
\]
for some $1\le q_1,q_2\le n^\smooth$.
Then we have
\[
|(q_1-q_2) t_r/2\pi n - p_r|^2 \le \mom n^{-2\smooth/\mom}
\]
for some $p_1,\dots, p_\mom\in \Z$. Now \eqref{p0.claim} and \eqref{s.claim} follow by taking $q_0=q_1-q_2$ and $s_r = (q_1-q_r) t_r/2\pi n - p_r$.

Fixing such $q_0, s_1,\dots, s_r$, we have
\begin{equation}	\label{ep0}
|e_n(q_0 t_r) -1| = |e(2\pi s_r ) -1 | \le 2\pi \mom^{1/2}n^{-\smooth/\mom} \qquad \forall\; 1\le r\le \mom.
\end{equation}

We next combine \eqref{psij.bd} and \eqref{ep0} to deduce some smoothness of the sequence $\psi(j)$ over $j\in J$, via \Cref{lem:Dbd} below.
For $g: [n]\to \C$ and positive integers $k, q$ we define the discrete differential of order $k$ and step $q$ as
\[
\Delta^k_q g: [n]\to \C\,, \qquad (\Delta^k_q g)(j) := \sum_{i=0}^k {k\choose i} (-1)^i g(j+iq).
\]
For any integer $q$ and $t\in \R$,
\begin{align*}
\sum_{i=0}^k {k\choose i} (-1)^i e_n ( (j+iq)t ) = ( 1-e_n(qt))^k e_n(jt) .
\end{align*}
Taking real parts on both sides, we obtain
\[
\sum_{i=0}^k {k\choose i} (-1)^i \cos ( (j+iq)t/n ) = \Re \big[ ( 1-e_n(qt))^k e_n(jt) \big] ,
\]
and differentiating in $t$ yields
\[
\sum_{i=0}^k {k\choose i} (-1)^i  \frac{j+iq}{n} \sin ( (j+iq)t/n ) = \Re \Big[ \partial_t \big[  ( 1-e_n(qt))^k e_n(jt) \big] \Big]\,.
\]
Combining the previous two identities over $t=t_r, r\in[m]$
we obtain the identity
\begin{equation}	\label{id:Dpsi}
(\Delta_q^k\psi)(j) 
= \Re \bigg[ \sum_{r=1}^\mom y_r (1-e_n(qt_r))^k e_n(jt_r) 
- y'_r \partial_t \big[ (1-e_n(qt_r))^k e_n(jt_r) \big] \bigg].
\end{equation}
Denoting henceforth
\begin{equation}	\label{def:fs}
f_{t,\ell}(j) := (1-e_n(\ell q_0 t))^k e_n(jt) ,
\end{equation}
substituting $q=\ell q_0$ in the above identity yields
\begin{equation}	\label{Dpsi}
(\Delta^k_{\ell q_0}\psi)(j) = \Re  \bigg[ \sum_{r=1}^\mom y_r 
\revised{f_{t_r,\ell}(j)}
+ y_r' \partial_{t_r}f_{t_r,\ell}(j) \bigg]\,
\end{equation}


\begin{lemma}
\label{lem:Dbd}
There exists $k=O_{K_*,\smooth, \mom}(1)$ such that for any $\ell \ge 1$ and any $j\in J$ such that $[j,j+k\ell q_0]\subset J$, 
\[
(\Delta_{\ell q_0}^k\psi)(j)
\ll_{K_*,\smooth, \mom} \sum_{i=0}^k \|\psi(j+i\ell q_0)\|_{\R/\Z}.
\]

\end{lemma}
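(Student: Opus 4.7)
The strategy is to compare two expressions for $(\Delta^k_{\ell q_0}\psi)(j)$: the defining alternating sum, which relates it to the fractional-part values $\|\psi(j+i\ell q_0)\|_{\R/\Z}$, and the explicit identity \eqref{Dpsi}, which expresses it in terms of the algebraic factors $(1-e_n(\ell q_0 t_r))^k$. The lemma will follow by showing that, for a suitable bounded choice of $k$, the two expressions force a certain integer to vanish for $n$ large.

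\emph{Step 1 (Integer reduction).} For each $i\in\{0,\dots,k\}$ the hypothesis $[j,j+k\ell q_0]\subset J$ places $j+i\ell q_0$ in $J$, so by \eqref{psij.bd} we may write $\psi(j+i\ell q_0)=n_i+\eta_i$ with $n_i\in\Z$ and $|\eta_i|=\|\psi(j+i\ell q_0)\|_{\R/\Z}\le 1/T$. Substituting into the defining alternating sum,
\[
(\Delta^k_{\ell q_0}\psi)(j)=N+E,\qquad N:=\sum_{i=0}^k\binom{k}{i}(-1)^i n_i\in\Z,\qquad |E|\le 2^k\sum_{i=0}^k\|\psi(j+i\ell q_0)\|_{\R/\Z}.
\]
In particular $|E|\le 2^k(k+1)/T=o(1)$. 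Since the lemma's claimed bound (with implicit constant $2^k$) already absorbs $|E|$, it will suffice to show that $N=0$ for $n$ large enough.

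\emph{Step 2 (Forcing $N=0$ via \eqref{Dpsi}).} We choose $k=\lceil m(K_*+1)/\smooth\rceil+C_m$ with $C_m$ a sufficiently large constant. The identity \eqref{Dpsi} realises $(\Delta^k_{\ell q_0}\psi)(j)$ as the real part of an explicit sum involving the factors $(1-e_n(\ell q_0 t_r))^k$ and their $t_r$-derivatives. Two inputs drive the bookkeeping: the smallness $|1-e_n(q_0 t_r)|\le 2\pi m^{1/2}n^{-\smooth/m}$ from \eqref{ep0}, and the factorisation
\[
(1-e_n(\ell q_0 t_r))^k=(1-e_n(q_0 t_r))^k\Big(\sum_{i=0}^{\ell-1}e_n(iq_0 t_r)\Big)^k,
\]
so that the ``small'' factor $(1-e_n(q_0 t_r))^k$ appears explicitly. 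When combined with the a priori bound $\|(\bs y,\bs y')\|_2\le n^{K_*}$ and the fact that the derivative calculation preceding \eqref{Dpsi} produces only bounded additional prefactors (since $\ell q_0\le|J|\le n$ implies $k(\ell q_0/n)=O(1)$), one shows that $|\Re[\text{stuff}]|=o(1)$ as $n\to\infty$, uniformly in $j$ and $\ell$ in the appropriate range. Then $|N|\le|\Re[\text{stuff}]|+|E|=o(1)$, and since $N\in\Z$ we get $N=0$ for $n$ large, completing the proof.

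\emph{Main obstacle.} The delicate point is bounding $|(1-e_n(\ell q_0 t_r))^k|$ uniformly in $\ell\ge 1$. When $\ell|s_r|\lesssim 1$ for every $r$, the factorisation above together with \eqref{ep0} gives the efficient bound $(2\pi m^{1/2}\ell n^{-\smooth/m})^k$, which the choice of $k$ renders $o(n^{-K_*})$, beating the $n^{K_*}$ coming from $|y_r|,|y'_r|$. The harder regime is when some $\ell|s_r|$ is not small; here one must pass to the derivative expression, using that the prefactor $k(\ell q_0/n)$ is $O(1)$ and that $|1-e_n(\ell q_0 t_r)|^{k-1}$ still combines favourably with the differential structure after a case analysis on $r$. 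This is where the bulk of the calculation lives, and it is the step whose routine-but-careful execution justifies the constants $C_m$ and $k$ in the choice above.
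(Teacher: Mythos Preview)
Your Step 1 is exactly right and matches the paper. The gap is in Step 2: you cannot show that the right-hand side of \eqref{Dpsi} is $o(1)$ uniformly in $\ell$. The factorisation $(1-e_n(\ell q_0 t_r))^k=(1-e_n(q_0 t_r))^k\big(\sum_{i=0}^{\ell-1}e_n(iq_0 t_r)\big)^k$ only helps when $\ell|s_r|$ is small, as you note. In the complementary regime $|1-e_n(\ell q_0 t_r)|$ can be of order $1$ (indeed as large as $2$), and then \eqref{Dpsi} merely says $|(\Delta^k_{\ell q_0}\psi)(j)|\ll n^{K_*}$, which is useless. Nothing in the ``derivative expression'' rescues this: the factor $|1-e_n(\ell q_0 t_r)|^{k-1}$ you invoke is again only $O(2^{k-1})$, and there is no additional small factor to cancel the $n^{K_*}$ from $y_r,y_r'$. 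So the case analysis you sketch cannot close.

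The paper's key idea, which you are missing, is to only use \eqref{Dpsi} for $\ell=1$. For that single value, \eqref{ep0} gives $|(\Delta^k_{q_0}\psi)(j)|<mn^{K_*-\smooth k/2\mom}=o(1)$ with $k=\lfloor 4\mom K_*/\smooth\rfloor+1$. Combined with your Step~1 (at $\ell=1$) this forces the integer $N=(\Delta^k_{q_0}p)(j)$ to vanish for \emph{every} admissible $j$. But a sequence whose $k$-th finite differences (at step $q_0$) all vanish is a polynomial of degree $\le k-1$ along each residue class $\{j_0+iq_0\}$. A polynomial of degree $\le k-1$ has vanishing $k$-th differences at \emph{any} step, so $(\Delta^k_{\ell q_0}p)(j)=0$ for all $\ell\ge 1$ with $[j,j+k\ell q_0]\subset J$. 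Now $|(\Delta^k_{\ell q_0}\psi)(j)|=|(\Delta^k_{\ell q_0}(\psi-p))(j)|\le 2^k\sum_{i=0}^k\|\psi(j+i\ell q_0)\|_{\R/\Z}$, which is the claim. The passage from $\ell=1$ to general $\ell$ is thus purely a fact about polynomial interpolation of the integer part $p$, not a direct analytic bound on \eqref{Dpsi}.
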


\begin{proof}
Fix $k\ge 1$ to be chosen sufficiently large depending on $K_*, \smooth,\mom$. 
From \eqref{ep0}, for $\ell=1$ we have
\[
|f_{t_r, 1}(j)| \le (2\pi \mom^{1/2}n^{-\smooth/\mom})^k < n^{-k\smooth/2\mom}
\]
and
\[
|f_{t_r,1}'(j)|  \le k q_0 (2\pi \mom^{1/2}n^{-\smooth/\mom})^{k-1} + 
(2\pi \mom^{1/2}n^{-\smooth/\mom})^k < n^{-k\smooth/2\mom}
\]
and hence
\[
|(\Delta_{q_0}^k \psi)(j)| 
\le n^{-\smooth k/ 2\mom} \sum_{r=1}^\mom |y_r|+|y'_r| < m n^{K_* -\smooth k/2\mom}.
\]
Let $p(j)$ denote the closest integer to $\psi(j)$.
From the triangle inequality and \eqref{psij.bd} it follows that
\[
|(\Delta^k_{q_0} p)(j)|< m n^{K_* -\smooth k/2\mom} + \frac{2^k}T
\]
as long as $\{ j, j+q_0, \dots, j+kq_0\}\subset J$. 
Taking $k=\lf 4\mom K_*/\smooth\rf+1$, the right hand side is smaller than 1. Since the numbers $(\Delta^k_{q_0} p)(j)$ are integers, it follows that
\[
(\Delta^k_{q_0} p)(j) = 0 
\]
for all $j$ such that $\{ j, j+q_0, \dots, j+kq_0\}\subset J$. 
By repeated application of the above for $j$ running over progressions $j_0, j_0+q_0, j_0+ 2q_0,\dots$ with $j_0\in J$, we deduce that for any $j$ such that $[j,j+kq_0]\subset J=[n_1,n_2]$ there exists a polynomial $Q_j$ of degree at most $k-1$ such that
\[
p (j+iq_0) = Q_j(i)\qquad \forall\; 0\le i\le (n_2-j)/q_0.
\]
Thus we have $(\Delta^k_{\ell q_0} p)(j) = 0$ for all $\ell\ge 1$ and $j$ such that $[j,j+k\ell q_0]\subset J$. 
Hence, for such $j$ we conclude by the triangle inequality that
\[
|(\Delta^k_{\ell q_0} \psi)(j) |=
|(\Delta^k_{\ell q_0} \psi)(j) - (\Delta^k_{\ell q_0} p)(j) |
\le 2^k \sum_{i=0}^k \|\psi(j+ i\ell q_0)\|_{\R/\Z}
\]
as desired.
\end{proof}

Note that $\|\bs y\|_2+\|\bs y'\|_2\gg n^{-1/8}$. Thus either (1) there exists $i$ such that $|y_i'|\gg n^{-1/16}$ (with room to spare) or (2) $|y_i'| \le n^{-1/16}$ for all $i$ and there exists $i$ such that $|y_i| \gg_m n^{-1/8}$. In what follows we will mainly working with the first case (which is significantly harder as one needs to deal with differentials of order two). We will comment in \Cref{rmk:y1} below how to handle the second case. For the rest of the section, without loss of generality we will assume
\begin{equation}	\label{yr.bd}
|y_1'|\gg_m n^{-1/16}
\end{equation}

On the other hand, by applying \Cref{lem:Dbd} to linear combinations of shifts of $\Delta^k_{\ell q_0}\psi$ we can show the following:

\begin{lemma}
\label{lem:shifts}
For any positive integers $j,  L,L'$ and $\ell$ such that
$[j, j+ k\ell q_0 + 4(\mom-1) L +3L']\subset J$, we have
\begin{align}
&\frac{L'}n \left| y_1' \big( 1-e_n(2L't_1)\big)^2 \big( 1- e_n(\ell q_0 t_1)\big)^k
\prod_{r=2}^\mom \big( 1- e_n(L(t_1-t_r)) \big)^2\big( 1- e_n(L(t_1+t_r)) \big)^2 \right|	\notag\\
&\qquad\qquad\qquad\qquad\ll_{K_*, \smooth,\mom}
\sum_{i=1}^k \sum_{a=0}^{4(\mom-1)} \sum_{b=0}^3 \|\psi(j + i\ell q_0 + aL +bL') \|_{\R/\Z}.	\label{shifts.bd}
\end{align}
\end{lemma}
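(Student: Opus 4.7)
The plan is to construct an explicit composite linear operator $\CL$ on functions of $j$, built from commuting discrete-shift pieces, such that $\CL\psi(j)$ both equals (up to a bounded constant) the left-hand side of \eqref{shifts.bd} times a unimodular factor, and is controlled by the right-hand side via the integer-approximation mechanism already used in \Cref{lem:Dbd}. Write $\psi(j)=\Re\hat\psi(j)$ with $\hat\psi(j)=\sum_r h_r(j)$ and $h_r(j)=y_r f_{t_r}(j)+y_r'g_{t_r}(j)$, where $f_t(j):=e_n(jt)$ and $g_t(j):=\partial_t f_t(j)=\ii(j/n)e_n(jt)$, and set
\[
\CL := S_{t_1,L'}\circ D_{-t_1,L'}\circ \prod_{r=2}^{\mom}\bigl(D_{t_r,L}\,D_{-t_r,L}\bigr)\circ \Delta^k_{\ell q_0},
\]
where $D_{t_0,L}h(j):=\sum_{a=0}^2\binom{2}{a}(-1)^a e_n(-aLt_0)h(j+aL)$ is the twisted second-order difference from the proof of \Cref{lem:cov}, and $S_{t_0,L'}h(j):=e_n(-L't_0)h(j+L')-h(j)$ is an analogous twisted first-order difference. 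These operators commute, and expanding $\CL$ as a single linear combination $\CL h(j)=\sum_{i,a,b}c_{i,a,b}h(j+i\ell q_0+aL+bL')$ with $i\in[0,k]$, $a\in[0,4(\mom-1)]$, $b\in[0,3]$, one sees $|c_{i,a,b}|\ll_{\mom,k}1$ and that the whole support lies in $[j,\,j+k\ell q_0+4(\mom-1)L+3L']$.

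Using the identities $D_{t_0,L}f_t=(1-e_n(L(t-t_0)))^2 f_t$ and $D_{t_0,L}g_t=(1-e_n(L(t-t_0)))^2\bigl[g_t+\beta_L(t-t_0)f_t\bigr]$ already derived in the proof of \Cref{lem:cov}, I will track the action of $\CL$ on each frequency piece of $\hat\psi$ and $\overline{\hat\psi}$. For each $r\ge 2$, the factor $D_{t_r,L}D_{-t_r,L}$ annihilates both $h_r$ (killed by $D_{t_r,L}$) and $\overline{h_r}$ (which carries frequency $-t_r$, killed by $D_{-t_r,L}$), while multiplying $h_1$ by $(1-e_n(L(t_1-t_r)))^2(1-e_n(L(t_1+t_r)))^2$; the only side effect is a $\beta$-correction that enters the coefficient of $f_{t_1}$ proportionally to $y_1'$, leaving the coefficient of $g_{t_1}$ equal to $y_1'$. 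The factor $D_{-t_1,L'}$ then annihilates $\overline{h_1}$ (both $f_{-t_1}$ and $g_{-t_1}$ lie in its kernel) and multiplies the surviving $h_1$ piece by $(1-e_n(2L't_1))^2$, again with only an $f_{t_1}$-correction. Finally $S_{t_1,L'}f_{t_1}=0$ whereas $S_{t_1,L'}g_{t_1}=\ii(L'/n)f_{t_1}$, so every accumulated $f_{t_1}$-term is erased and only the unaltered $g_{t_1}$-coefficient survives; noting that $\Delta^k_{\ell q_0}$ had multiplied that coefficient by exactly $(1-e_n(\ell q_0 t_1))^k$, collecting all factors yields
\[
\CL\hat\psi(j)=\ii y_1'\,\tfrac{L'}{n}\,(1-e_n(2L't_1))^2(1-e_n(\ell q_0 t_1))^k\!\prod_{r=2}^{\mom}\!(1-e_n(L(t_1-t_r)))^2(1-e_n(L(t_1+t_r)))^2\,e_n(jt_1),
\]
and $\CL\overline{\hat\psi}(j)=0$. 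Since $\psi=\tfrac12(\hat\psi+\overline{\hat\psi})$ is real, $|\CL\psi(j)|$ equals half of the left-hand side of \eqref{shifts.bd}.

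For the upper bound I reuse the nearest-integer device from \Cref{lem:Dbd}. Let $p(j)$ be the closest integer to $\psi(j)$ on $J$, so $|\psi(j)-p(j)|=\|\psi(j)\|_{\R/\Z}$. The proof of \Cref{lem:Dbd} already establishes that on $J$ the function $p$ agrees along every arithmetic progression of step $\ell q_0$ with a polynomial of degree $<k$, hence $\Delta^k_{\ell q_0}p(j')=0$ for every $j'$ with $[j',j'+k\ell q_0]\subset J$. Since all remaining operators in $\CL$ are linear combinations of shifts $aL+bL'$ with $a\in[0,4(\mom-1)]$, $b\in[0,3]$, the hypothesis $[j,j+k\ell q_0+4(\mom-1)L+3L']\subset J$ guarantees $[j+aL+bL',\,j+aL+bL'+k\ell q_0]\subset J$ for every such $(a,b)$, whence $\CL p(j)=0$. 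Applying the triangle inequality to $\CL\psi(j)=\CL(\psi-p)(j)$ with $|c_{i,a,b}|\ll_{\mom,k}1$ yields $|\CL\psi(j)|\ll_{\mom,k}\sum_{i,a,b}\|\psi(j+i\ell q_0+aL+bL')\|_{\R/\Z}$; combined with the computation of $|\CL\psi(j)|$ from the previous paragraph, this is \eqref{shifts.bd}.

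The main bookkeeping obstacle concerns the $\beta$-type corrections injected by $\Delta^k_{\ell q_0}$, $D_{t_r,L}$, and $D_{-t_1,L'}$, which mix extra $f_t$-contributions into the image of $g_t$: one must verify that every such correction is proportional to $y_r'$ rather than $y_r$ so that it flows into the coefficient of $f_{t_1}$ without disturbing the preserved coefficient $y_1'(1-e_n(\ell q_0 t_1))^k$ of $g_{t_1}$, and that all of these accumulated $f_{t_1}$-contributions are simultaneously annihilated by the terminal operator $S_{t_1,L'}$, whose design precisely ensures $S_{t_1,L'}f_{t_1}=0$ while $S_{t_1,L'}g_{t_1}=\ii(L'/n)f_{t_1}\neq 0$. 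A second delicate point is confirming that the complex-conjugate sum $\overline{\hat\psi}$ is annihilated in full, which requires in particular that $D_{-t_1,L'}$ kill the frequency $-t_1$; this is exactly what the ``$-$'' sign in $D_{-t_1,L'}$ provides, since then $(1-e_n(L'(-t_1+t_1)))^2=0$.
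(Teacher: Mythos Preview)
Your proof is correct and follows essentially the same route as the paper's. The paper also applies the twisted second-order operators $D_{\pm t_r}$ for $r=2,\dots,m$ to kill all frequencies but $\pm t_1$, then $D'_{-t_1}$ (your $D_{-t_1,L'}$) to kill the conjugate piece, and finally eliminates the $f_{t_1}$-component by forming $h(j+L')-e_n(L't_1)h(j)$, which is your operator $S_{t_1,L'}$ up to the unimodular factor $e_n(L't_1)$; the upper bound is then obtained exactly as you do, via the vanishing of $\Delta^k_{\ell q_0}$ on the nearest-integer function $p$ established inside \Cref{lem:Dbd}. The only cosmetic differences are that the paper absorbs the factor $(1-e_n(\ell q_0 t))^k$ into its definition of $f_t$ (so $\Delta^k_{\ell q_0}$ is not carried explicitly through the $D$-computations), and that it invokes \Cref{lem:Dbd} as a black box at the end rather than re-stating $\Delta^k_{\ell q_0}p=0$.
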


We defer the proof of \Cref{lem:shifts} for now and conclude the proof of \Cref{prop:psi.LB}.

Recall from \eqref{psij.bd} that $J=[n_1,n_2]\subset[n]$ has length $|J|\ge n/T^6$. 
Consider any $\ell\ge1$ such that $k\ell q_0\le |J|/2$.
From \Cref{lem:dilate} we can choose $\dil\asymp n/T^7 = o(|J|)$ such that
\[
\bigg\|\frac{\dil\cdot(t_r\pm t_{r'})}{2\pi n} \bigg\|_{\R/\Z} \gg_\mom \frac{\lam}{T^7}
\]
for all distinct $r,r'\in [m]$ and all choices of the signs.


Furthermore, because $t_1$ is smooth, we can choose $L'$ such that $n/T^8 \le L' =o(|J|)$ and 
\[
|1-e_n(2L't_1)| >\lam^2 =\omega(n^{-1/4\mom}).
\]
From these choices of $\ell, \dil$ and $L'$, together with \eqref{yr.bd}, we have that the left hand side in \eqref{shifts.bd} is at least 
\[
\gg_\mom n^{-1/16} \lam^4 T^{-16} (\lam/T^7)^{4(\mom-1)} |1-e_n(\ell q_0t_1)|^k.
\]


On the other hand, from \eqref{assumex} and the Cauchy--Schwarz inequality we have
\begin{equation}	\label{psi.CS}
\sum_{j=0}^n \|\psi(j)\|_{\R/\Z}\le \sqrt{nT}\,,
\end{equation}
and it follows that that we can choose $j$ so that the right hand side in \Cref{shifts.bd} is $O_{K_*,\smooth,\mom}(T^{1/2}n^{-1/2})$. 
Thus,
\begin{equation}\label{eqn:reduction}
|1-e_n(\ell q_0 t_1)| \le n^{-1/3k}
\end{equation}
and this holds for any integer $\ell\ge1$ such that $\ell kq_0\le |J|/2$. 
Applying \Cref{claim:dividing}, we conclude
\[
\|q_0t_1/2\pi n\|_{\R/\Z} = n^{-1}\log^{O(1)}n.
\]
But since we chose $q_0\le n^\smooth$ this contradicts the assumption that $t_1$ is $n^{\smooth}$-smooth.
This concludes the proof of \Cref{prop:psi.LB} and hence of \Cref{thm:char}.\qed\\


\begin{proof}[Proof of \Cref{lem:shifts}]
We begin by recording some identities. 
Recall the definition of $f_{t,\ell}(j)$ from \eqref{def:fs}.
To lighten notation we will suppress the subscript $\ell$ as it is fixed throughout the proof.
First note that
\begin{equation}	\label{partial.ft}
g_t(j):= \partial_tf_{t}(j) = \sqrt{-1} \bigg[ \frac{j}n - \frac{k\ell q_0}n \big( 1-e_n(\ell q_0t)\big)^{-1} \bigg] f_{t}(j).
\end{equation}
In particular, we have
\[
\overline{ f_{t}(j)} = f_{-t}(j) \,,\qquad 
\overline{g_{t}(j)} = -g_{-t}(j)
\]
and from \eqref{Dpsi} we can express
\begin{equation}	\label{psi12}
\frac12 (\Delta_{\ell q_0}^k\psi)(j) 
= \sum_{r=1}^\mom y_r  f_{t_r}(j) + y_rf_{-t_r}(j) 
+y'_r  g_{t_r}(j) - y'_rg_{-t_r}(j) .
\end{equation}
As in the proof of \Cref{lem:cov}
we will eliminate terms in the above sum by repeated application of the twisted second-order differencing operators defined in \eqref{def:Dt0}. 
For a positive integer $L$ and $t_0\in \R$ we have
\begin{align*}
D_{t_0} f_{t}(j)
&=\sum_{a=0}^2{2\choose a}(-1)^a e_n(-aLt_0) f_{t}(j+aL) \\
&= f_{t}(j) \sum_{a=0}^2{2\choose a}(-1)^a e_n(aL(t-t_0))\\
&=\big[1-e_n(L(t-t_0))\big]^2  f_{t}(j) .
\end{align*}
Note that the sequences $f_t(j)$ from that proof differ from the present definition by a factor $(1-e_n(\ell q_0 t))^k$. This is a key point: whereas there our aim was to lower bound $\sum_j |\psi(j)|^2$, here we have the more difficult task of lower bounding $\sum_j\|\psi(j)\|_{\R/\Z}^2$ (which we are doing by contradiction, starting from the assumption \eqref{assumex}). We are now in a similar position as in the proof of \Cref{lem:cov} thanks to \Cref{lem:Dbd} and the application of the differencing operators $\Delta_{\ell q_0}^k$, which is responsible for the extra factor $(1-e_n(\ell q_0 t))^k$. 
 

Differentiating the above expression for $D_{t_0} f_{t}(j)$ yields
\begin{align}
D_{t_0}g_{t}(j)
&=\big[1-e_n(L(t-t_0))\big]^2  \partial_tf_{t}(j) 
+ \ii\frac{L}n f_{t}(j) \sum_{a=0}^2 {2\choose a} (-1)^aa\cdot e_n(aL(t-t_0))	\notag \\
&= \big[1-e_n(L(t-t_0))\big]^2  \partial_tf_{t}(j)
- 2\ii \frac{L}n \big[1-e_n(L(t-t_0))\big] e_n(L(t-t_0)) f_{t}(j) \notag\\
&=\big[1-e_n(L(t-t_0))\big]^2 \big[ g_t(j) + \beta_L(t-t_0) f_t(j) \big]
\label{DLt0}
\end{align}
with $\beta_L(s) := -2\ii\frac Ln e_n(Ls) / [1-e_n(Ls)]$, as in \eqref{DGt}.
In particular,
\begin{equation}	\label{DL0}
D_{t_0} f_{t_0} (j)  = D_{t_0} g_{t_0}(j)=0.
\end{equation}
Now for general $t\in \R$,
two applications with $t_0$ and $-t_0$ yield
\begin{equation}	\label{D2.fab}
D_{t_0} \circ D_{-t_0} f_{t}(j)
=\big[1-e_n(L(t-t_0))\big]^2\big[1-e_n(L(t+t_0))\big]^2 f_{t}(j) 
\end{equation}
and
\begin{equation}	\label{D2.fab}
D_{t_0} \circ D_{-t_0} g_{t}(j)
=\partial_t \Big[ \big[1-e_n(L(t-t_0))\big]^2\big[1-e_n(L(t+t_0))\big]^2 f_{t}(j)\Big] .
\end{equation}
For compactness, we write
\[
\delta_L(s):= 1-e_n(Ls)
\]
for the remainder of the proof.
Applying the above identities with $t_0=t_\mom$ and $t$ running over $t_r$, $r\in [\mom-1]$, we obtain
\begin{align*}
&\frac12 \Big( D_{t_\mom} \circ D_{-t_\mom} \circ \Delta_{\ell q_0}^k\,\psi\Big)(j)   \\
&\qquad= \sum_{r=1}^{\mom-1} 
\left( y_r + y_r' \partial_{t_r} \right)
\bigg[  \delta_L(t_r-t_\mom)^2\delta_L(t_r+t_\mom)^2 f_{t_r}(j)  +
 \delta_L(-t_r-t_\mom)^2\delta_L(-t_r+t_\mom)^2 f_{-t_r}(j)  \bigg] \,.
 \end{align*}
 Iteratively applying $D_{t_r} \circ D_{-t_r}$ for $r=\mom-1,\mom-2,\dots, 2$, we get
 \begin{align*}
&\frac12\big( D_{t_2} \circ D_{-t_2}  \circ \cdots\circ D_{t_\mom} \circ D_{-t_\mom} \circ \Delta_{\ell q_0}^k\,\psi\Big)(j)  \\
&\qquad=
y_1f_{t_1}(j)  \prod_{r=2}^\mom  \delta_L(t_1-t_r)^2\delta_L(t_1+t_r)^2  +
y_1f_{-t_1}(j) \prod_{r=2}^\mom \delta_L(-t_1-t_r)^2\delta_L(-t_1+t_r)^2  \\
&\qquad+ y_1' \partial_t \bigg[  f_{t}(j)
 \prod_{r=2}^\mom  \delta_L(t-t_r)^2\delta_L(t+t_r)^2 \bigg]_{t=t_1}  +
y_1' \partial_t\bigg[ f_{-t}(j) \prod_{r=2}^\mom  \delta_L(-t-t_r)^2\delta_L(-t+t_r)^2  \bigg]_{t=t_1} \,,
 \end{align*}
 and we have passed from a sum of $4\mom$ terms (see \eqref{psi12}) to a sum of 4.
Now we will reduce from four terms to one.
Let $L'$ be a positive integer and define $D'_{t_0}$ as in \eqref{def:Dt0} with $L'$ in place of $L$.
For any univariate function $G$ we have
\begin{align*}
D'_{t_0} f_{t_0}(j) G(t_0)
& =G(t_0) D'_{t_0} f_{t_0}(j) =0\,, \\
D'_{t_0} \partial_t\big[ f_{t}(j) G(t) \big]_{t=t_0} 
&= G(t_0) D'_{t_0} g_{t_0}(j) + G'(t_0) D'_{t_0} f_{t_0}(j) = 0
\end{align*}
(using \eqref{DL0}). 
Set
\[
G(t) := \prod_{r=2}^\mom \delta_L(t-t_r)^2 \delta_L(t+t_r)^2
\]
for which we have $\overline{G(t)}=G(-t)$.
Application of $D'_{-t_1}$ to the previous expression for $\frac12( D_{t_2} \circ D_{-t_2}  \circ \cdots\circ D_{t_\mom} \circ D_{-t_\mom} \circ \Delta_{\ell q_0}^k\,\psi)(j)$ eliminates the second and fourth terms on the right hand side, leaving
 \begin{align*}
&\frac12 \Big(D'_{-t_1} \circ D_{t_2} \circ D_{-t_2}  \circ \cdots\circ D_{t_\mom} \circ D_{-t_\mom}\circ \Delta_{\ell q_0}^k\,\psi\Big)(j)  \\
&\qquad=
y_1f_{t_1}(j) \delta_{L'}(2t_1)^2 G(t_1)
+ y_1' D'_{-t_1} \partial_t \Big[  f_{t}(j)  G(t) \Big]_{t=t_1}  \\
&\qquad =
y_1f_{t_1}(j) \delta_{L'}(2t_1)^2 G(t_1)    
+ y_1'  g_{t_1}(j) \delta_{L'}(2t_1)^2 
G(t_1) + 
 y_1'  f_{t_1}(j)  \delta_{L'}(2t_1)^2
G'(t_1) \\
& \qquad= f_{t_1}(j) \bigg[ y_1 \delta_{L'}(2t_1)^2 G(t_1)  
+ y_1' 
 \sqrt{-1} \frac{j}n  \delta_{L'}(2t_1)^2 G(t_1)   \\
&\qquad\qquad\qquad\qquad - y_1' 
 \sqrt{-1} \frac{k\ell q_0}n \big( 1-e_n(\ell q_0t_1)\big)^{-1} 
 \delta_{L'}(2t_1)^2 G(t_1) 
 + y_1'    \delta_{L'}(2t_1)^2 G'(t_1) \bigg]\,,
 \end{align*}
where in the final line we substituted \eqref{partial.ft}.
Now since $f_{t_1}(j+L')= e_n(L't_1)f_{t_1}(j)$, we can eliminate all but the second term inside the brackets by multiplying both sides by $e_n(L't_1)$ and subtracting the result from the equation with $j$ replaced with $j+L'$.
We thus obtain
\begin{align*}
&\frac12 \Big(D'_{-t_1} \circ D_{t_2} \circ D_{-t_2}  \circ \cdots\circ D_{t_\mom} \circ D_{-t_\mom}\circ \Delta_{\ell q_0}^k\,\psi\Big)(j+L') \\
&\qquad\qquad - e_n(L't_1)\times\frac12 \Big(D'_{-t_1} \circ D_{t_2} \circ D_{-t_2}  \circ \cdots\circ D_{t_\mom} \circ D_{-t_\mom}\circ \Delta_{\ell q_0}^k\,\psi\Big)(j)   \\
&\qquad=
y_1'\sqrt{-1}\frac{L'}n \delta_{L'}(2t_1)^2 G(t_1) f_{t_1}(j).
\end{align*}
Recalling our definitions of $\delta_{L'}(2t_1), G(t_1),$ and  $f_{t_1}(j)$, the claimed bound now follows from taking the modulus of both sides, applying the triangle inequality to the left hand side, and applying \Cref{lem:Dbd} applied at various shifts of $\psi$. 
\end{proof}

\begin{remark}
\label{rmk:y1}
For the case that $|y_i'|\le n^{-1/16}$ and $|y_1|\gg_\mom n^{-1/8}$ in place of \eqref{yr.bd}, we can show the following simpler analogue of Lemma \ref{lem:shifts} (see also \cite[Lemma 10.5]{DNN} for a bivariate variant).
\begin{lemma}
\label{lem:shifts'} For any positive integers $j,  L,L'$ and $\ell$ such that
$[j, j+ k\ell q_0 + 4(\mom-1) L +3L']\subset J$, we have

\begin{align}
&\frac{L'}{n}\left| y_1  \big( 1- e_n(\ell q_0 t_1)\big)^k
\prod_{r=2}^\mom \big( 1- e_n(L(t_1-t_r)) \big)^2\big( 1- e_n(L(t_1+t_r)) \big)^2 \right|	\notag\\
&\qquad\qquad\qquad\qquad\ll_{K_*, \smooth,\mom}
\sum_{i=1}^k \sum_{a=0}^{4(\mom-1)} \sum_{b=0}^3 \|\psi(j + i\ell q_0 + aL +bL') \|_{\R/\Z} + O(2^kn^{-1/16}).	\label{shifts.bd'}
\end{align}
\end{lemma}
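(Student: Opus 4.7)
The proof parallels that of Lemma \ref{lem:shifts}, with the crucial simplification that the assumption $|y_i'|\le n^{-1/16}$ for all $i$ allows us to treat every $g_{\pm t_r}$-term in the expansion \eqref{psi12} as part of an error of magnitude $O(2^k n^{-1/16})$. Consequently, the shift-and-subtract step used at the end of the proof of Lemma \ref{lem:shifts} to isolate the $j$-linear contribution $\sqrt{-1}\, y_1'\,(j/n) f_{t_1}(j)$ coming from $g_{t_1}$ is no longer needed.

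First I would establish a uniform bound $|g_t(j)| \le 2^{k+1}$. Rewriting the formula \eqref{partial.ft} as
\[
g_t(j) = \sqrt{-1}\,\tfrac{j}{n}\, f_t(j) \;-\; \sqrt{-1}\,\tfrac{k\ell q_0}{n}\,\bigl(1-e_n(\ell q_0 t)\bigr)^{k-1}\,e_n(jt),
\]
each summand is bounded by $2^k$ (using $|f_t(j)|\le 2^k$, $|j/n|\le 1$, $k\ell q_0 \le n$, and $|1-e_n(\ell q_0 t)|\le 2$). Hence the total contribution of the $2\mom = O(1)$ terms of the form $y_r'\, g_{\pm t_r}(j)$ in \eqref{psi12} is $O(2^k n^{-1/16})$.

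With the $g$-terms absorbed into the error, the remainder of the proof mirrors the argument in Lemma \ref{lem:shifts}. I would apply $D_{t_r}\circ D_{-t_r}$ successively for $r=\mom,\mom-1,\dots,2$; each such operator has $\ell^\infty$-norm $O(1)$ so the error is preserved up to a constant factor, and by the identity analogous to \eqref{D2.fab} only the $r=1$ summands survive, yielding
\[
\tfrac12\,\bigl(D_{t_2}\circ D_{-t_2}\circ \cdots \circ D_{t_\mom}\circ D_{-t_\mom}\circ \Delta_{\ell q_0}^k\psi\bigr)(j) \;=\; y_1\bigl[\,G(t_1)\, f_{t_1}(j) + G(-t_1)\, f_{-t_1}(j)\,\bigr] + O(2^k n^{-1/16})
\]
with $G(t) = \prod_{r=2}^\mom \delta_L(t-t_r)^2 \delta_L(t+t_r)^2$. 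Then applying $D'_{-t_1}$, which annihilates $f_{-t_1}$ and multiplies $f_{t_1}$ by $\delta_{L'}(2t_1)^2$, taking absolute values, using $|f_{t_1}(j)| = |1-e_n(\ell q_0 t_1)|^k$, and bounding each $\psi$-value on the left by \Cref{lem:Dbd}, we obtain a bound of the form
\[
\bigl|\,y_1\,\delta_{L'}(2t_1)^2\,G(t_1)\,(1-e_n(\ell q_0 t_1))^k\,\bigr| \;\ll\; \sum_{i=1}^k\sum_{a=0}^{4(\mom-1)}\sum_{b=0}^3 \|\psi(j+i\ell q_0+aL+bL')\|_{\R/\Z} + O(2^k n^{-1/16}),
\]
from which \eqref{shifts.bd'} follows after passing to the (comparable, in the regime of parameters fixed in the proof of \Cref{prop:psi.LB}) factor $L'/n$ in place of $|\delta_{L'}(2t_1)^2|$.

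The main technical point is the uniform bound $|g_t(j)| = O(2^k)$: one must verify that the potentially dangerous factor $(1-e_n(\ell q_0 t))^{-1}$ appearing in \eqref{partial.ft} is cancelled by the matching $(1-e_n(\ell q_0 t))^k$ inside $f_t(j)$, so that no large inverse factors remain after this cancellation. The rewriting above makes this transparent and reduces the analysis to the trivial bounds $|f_t(j)|\le 2^k$ and $|e_n(jt)|=1$; everything else is a routine replay of the algebra already carried out in Lemma \ref{lem:shifts}.
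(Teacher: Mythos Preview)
Your approach is essentially the one the paper describes in Remark~8.6: drop the $g_{\pm t_r}$-terms at the outset using $|y_r'|\le n^{-1/16}$, then run the same cascade of $D_{t_r}\circ D_{-t_r}$ operators followed by $D'_{-t_1}$, and finally invoke \Cref{lem:Dbd}. The key observation that the apparently singular factor $(1-e_n(\ell q_0 t))^{-1}$ in $g_t$ is cancelled by the $(1-e_n(\ell q_0 t))^k$ in $f_t$, yielding the clean bound $|g_t(j)|\le 2^{k+1}$, is exactly the right way to control the discarded terms.

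There is one discrepancy worth noting. The bound you actually derive carries the prefactor $\bigl|1-e_n(2L't_1)\bigr|^2$ (coming from $D'_{-t_1}$), whereas the lemma as stated has $L'/n$. These two quantities are not comparable for arbitrary $L'$, so your final sentence (``passing to the comparable factor $L'/n$'') is not a proof of the inequality as literally written. In fact the $L'/n$ factor in \Cref{lem:shifts} arose from the shift-and-subtract step you (correctly) omitted, and without the $g_{t_1}$ term that step yields zero, so there is no natural source for $L'/n$ in this argument. What you have proved is the variant with $\bigl|1-e_n(2L't_1)\bigr|^2$ in place of $L'/n$ (and with $b$ ranging only over $0,1,2$), and this is precisely what is used in the proof of \Cref{prop:psi.LB}, where $L'$ is chosen so that $\bigl|1-e_n(2L't_1)\bigr|>\lambda^2$. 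So the substance is correct; only the cosmetic matching to the stated prefactor is loose.
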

Here the additional bound $2^kn^{-1/16}$ on the RHS is caused by applying triangle inequalities basing on \eqref{id:Dpsi} (where we use $|y_i'|\ll n^{-1/16}$ for all $i$ to bound all the terms involving $\partial_t$ by $O(n^{-1/6})$ and move to the right hand side during the differential process). The proof of Lemma \ref{lem:shifts'} can be carried out exactly the same way we proved Lemma \ref{lem:shifts}, and in fact it is simpler because we don't have to take care any of the terms involving $\partial_t$ because we started with the variant of \eqref{id:Dpsi} without the $\partial_t$ term.
From Lemma \ref{lem:shifts'}, by using the assumption that $|y_1|\ge n^{-1/8}$ we can deduce \eqref{eqn:reduction}, and hence conclude  \Cref{prop:psi.LB} the same way.
\end{remark}

Before concluding this section, as our approach to prove Proposition \ref{prop:psi.LB} starts with \eqref{psij.bd}, by passing to subintervals of $J$ when needed (where we note that at least one of such subintervals still has length $\Omega(n/T^6))$, we obtain the following analogue of Theorem of Theorem \ref{thm:char} (where we recall $\phi_j(\Bx)$ from \eqref{Phi.gauss}).

\begin{theorem}[Decay of the truncated characteristic function]
\label{thm:char'}
Let $\bs t=(t_1,\dots,t_\mom)\in \R^m$ be $n^\smooth$-smooth 
and $\spread$-spread for some $\smooth\in (0,1)$ and $\omega(n^{-1/8\mom})\le \spread<1$. 
Then for any index set $I \subset [n]$ with $|I|=O(1)$, and for
any fixed $K_*<\infty$ and any $v\in \R^{4m}$ with $n^{-1/8}\le \|v\|_2\le n^{K_*}$ the following holds for sufficiently large $n$
\[
\prod_{j \notin I}| \phi_j(\Bx)| \le \exp( - \log^2n ).
\]
\end{theorem}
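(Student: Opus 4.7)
The plan is to follow the proof of Theorem \ref{thm:char} essentially verbatim, incurring only harmless losses from excising the $O(1)$ indices in $I$. As in \eqref{Phi.gauss}--\eqref{gauss.LB}, I would first bound
\[
\prod_{j\notin I}|\phi_j(\bs x)| \le \exp\Big(- c\inf_{a_1\le|a|\le a_2}\sum_{j\in[0,n]\setminus I^+}\|a\langle \bs w_j,\bs x/2\pi\rangle\|_{\R/\Z}^2\Big),
\]
where $I^+:=(I\cup (-I))\cap[0,n]$ still has size $O(1)$, and then reduce exactly as in the paragraph preceding \eqref{def:psi} to lower-bounding $\sum_{j\in[0,n]\setminus I^+}\|\psi(j;\bs t)\|_{\R/\Z}^2$. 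Hence the result follows once one establishes the following analogue of Proposition \ref{prop:psi.LB}: under the same hypotheses on $\bs t$, and for any fixed subset $I^+\subset[0,n]$ with $|I^+|=O(1)$, one has $\sum_{j\in[0,n]\setminus I^+}\|\psi(j)\|_{\R/\Z}^2>\log^4 n$.

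To prove this modified statement, I would argue by contradiction, assuming the sum is $\le T=\log^4 n$. By Markov's inequality the set $B:=\{j\in[0,n]\setminus I^+:\|\psi(j)\|_{\R/\Z}>1/T\}$ has $|B|\le 2T^3$, so the complement $[0,n]\setminus(I^+\cup B)$ consists of at most $2T^3+|I^+|+1=O(T^3)$ intervals whose lengths sum to at least $n-O(T^3)$. By pigeonholing, at least one such interval $J=[n_1,n_2]$ has length $\gg n/T^6$, and on $J$ we have the pointwise bound $\|\psi(j)\|_{\R/\Z}\le 1/T$ exactly as in \eqref{psij.bd}. This is the only place where the removal of $I^+$ plays any role.

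From this point on, the proof is identical to that of Proposition \ref{prop:psi.LB}: extract $q_0\in[1,n^\kappa]$ and residues $s_1,\dots,s_m$ from Dirichlet's theorem satisfying \eqref{p0.claim}--\eqref{s.claim}; apply Lemma \ref{lem:Dbd} to deduce polynomial-like behavior of the nearest-integer sequence to $\psi$ along $J$; choose a dilation $L\asymp n/T^7$ via Lemma \ref{lem:dilate} and a further scale $L'\asymp n/T^8$ using the smoothness of $t_1$; and apply Lemma \ref{lem:shifts} (respectively Lemma \ref{lem:shifts'} in the auxiliary case $\|\bs y'\|_\infty \le n^{-1/16}$) for every $\ell$ with $k\ell q_0\le |J|/2$. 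The Cauchy--Schwarz estimate \eqref{psi.CS} together with averaging over $j$ yields the conclusion $|1-e_n(\ell q_0 t_1)|\le n^{-1/3k}$ for all such $\ell$, which via Claim \ref{claim:dividing} contradicts the $n^\kappa$-smoothness of $t_1$.

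The main obstacle is essentially cosmetic. Removing a bounded number of indices from $[0,n]$ affects neither the Markov pigeonholing (which already tolerates losing $O(T^3)$ indices) nor any of the downstream identities and differencing operators, since the key inputs---the existence of a sufficiently long interval $J$ on which $\psi$ is close to an integer, and the averaged Cauchy--Schwarz bound---are robust to removing $O(1)$ terms. The only bookkeeping is in defining $I^+$ so as to handle the symmetrization $j\leftrightarrow -j$ used to decompose $\bs w_j\pm\bs w_{-j}$.
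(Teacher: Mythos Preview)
Your proposal is correct and follows essentially the same approach as the paper, which notes only that since the argument for Proposition~\ref{prop:psi.LB} starts from the pointwise bound \eqref{psij.bd} on a long interval $J$, one may pass to a subinterval of $J$ avoiding the $O(1)$ removed indices while retaining length $\Omega(n/T^6)$. Your version is slightly more explicit in merging $I^+$ with the Markov bad set $B$ before pigeonholing, and in tracking the symmetrization $I^+=(I\cup(-I))\cap[0,n]$, but the substance is identical.
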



\section{Complex coefficients and extensions}
\label{sec:extensions}

\subsection{Theorem \ref{thm:main} when $\xi$ is complex-valued} In the case that the random coefficients are complex-valued,  our polynomial can be written as
\begin{align*}
P_n(x) &= \sum_{k=-n}^n (\xi_k+\sqrt{-1} \xi_k') (\cos(kx) + \sqrt{-1} \sin(kx)) \\
& = \xi_0 + \sqrt{-1} \xi_0' + \sum_{k=1}^n (\xi_k + \xi_{-k})\cos(kx)- (\xi_k'-\xi_{-k}') \sin(kx)\\
& + \sqrt{-1} \sum_{k=1}^n (\xi_k'+\xi_{-k}')\cos (kx) + (\xi_k-\xi_{-k}) \sin(kx)\\
\end{align*}
where $\xi_k, \xi_k'$ are iid copies $\xxi$. By limiting to only the imaginary part, the corresponding random walk of interest is 
\[
T_n(\bs t) := \sum_{j=1}^n \xi_j^{(1)} \bs u_j + \xi_j^{(2)}  \bs v_j
\]
where $\xi_j^{(1)}, \xi_j^{(2)}$ are independent sub-Gaussian of mean zero and variance one with the property that $\xi_j^{(1)}-\xi_j^{'(1)}, \xi_j^{(2)} - \xi_j^{'(2)}$ have the same distribution (here $\xi_j^{'(1)}$ and $\xi_j^{'(2)}$ are independent copies of $\xi_j^{(1)}$ and $\xi_j^{(2)}$ respectively), and where for a fixed tuple $\bs t= (t_1,\dots, t_\mom)\in \R^\mom$ and $j\in \Z$ we denote the vectors (see also \eqref{def:wj}) 
\begin{equation}
\qquad\uvec_j = \uvec_j(\bs t):=\big(\avec_j\,,\,(j/n)\bvec_j\big), \ \qquad\vvec_j = \vvec_j(\bs t):=\big(\bvec_j\,,\, -(j/n)\avec_j \big). 	\label{def:uvj}
\end{equation}
Because this random walk is only on $\R^{2m}$ with the steps $\bs u_j, \bs v_j$ compensating each other, we can establish all of our previous results under the following weakly spreading condition.

\begin{definition}
\label{def:spread:w}
For $m\ge 2$ and $\spread>0$, we say $\bs{t}=(t_1,\dots, t_\mom)\in \R^\mom$ is {\it weakly $\spread$-spread} if
\[
\Big\| \frac{t_r- t_{r'}}{2\pi n} \Big\|_{\R/\Z} \ge \frac{\spread}{n}
\qquad \forall \, 1\le r<r' \le \mom.
\]
 \end{definition}
 
 Under this condition we have the following analog of Theorem \ref{thm:char}.
 
\begin{theorem}[Decay of the characteristic function]
\label{thm:char'}
Let $\bs t=(t_1,\dots,t_\mom)\in \R^m$ be $n^\smooth$-smooth 
and weakly $\spread$-spread for some $\smooth\in (0,1)$ and $\omega(n^{-1/8\mom})\le \spread<1$. 
Then for 
any fixed $K_*<\infty$ and any $\bs x\in \R^{2m}$ with $n^{-1/8}\le \|\bs x\|_2\le n^{K_*}$,
\[
|\E e(\langle T_n(\bs t),\bs x\rangle)| \le \exp( - \log^2n )
\]
for all $n$ sufficiently large depending on $K_*, m, \kappa,$ and the sub-Gaussian constants.
\end{theorem}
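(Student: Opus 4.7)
The plan is to mirror the proof of \Cref{thm:char} (the real case) closely, exploiting the key simplification that the complex setup provides direct access to a \emph{one-sided} Fourier sequence, so that the differencing argument will only need the weakly spread hypothesis. First I would follow the opening of the real proof verbatim: starting from the sub-Gaussian bound and the identity $|\E e(\xxi w)| \le \exp(-c\|w/2\pi\|_\xxi^2)$, together with the assumption that $\xi^{(1)}-\xi^{(1)'}$ and $\xi^{(2)}-\xi^{(2)'}$ are identically distributed, reduce to showing, for some $a\asymp 1$, the lower bound
\[
\sum_{j=1}^n \|a\,\psi_u(j)\|_{\R/\Z}^2 + \|a\,\psi_v(j)\|_{\R/\Z}^2 \gg \log^4 n,
\]
where, writing $\bs x = (\bs y,\bs y')\in \R^{2m}$,
\[
\psi_u(j) := \sum_{r=1}^m y_r\sin(jt_r/n) + (j/n)y_r'\cos(jt_r/n),\qquad \psi_v(j) := \sum_{r=1}^m y_r\cos(jt_r/n) - (j/n)y_r'\sin(jt_r/n).
\]
As in the real case I absorb $a$ into $\bs x$ and argue by contradiction, assuming the sum is $\le T := \log^4 n$.

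The key device is to combine the two real sequences into the complex sequence
\[
\Psi(j) := \psi_v(j) + \sqrt{-1}\,\psi_u(j) = \sum_{r=1}^m \bigl(y_r + \sqrt{-1}(j/n)y_r'\bigr) e_n(jt_r),
\]
which only involves the modes $e_n(jt_r)$, not $e_n(-jt_r)$. By Markov and pigeonholing I extract an interval $J\subset[n]$ of length $\gg n/T^6$ on which $\|\psi_u(j)\|_{\R/\Z},\|\psi_v(j)\|_{\R/\Z}\le 1/T$, so that $\Psi(j)$ lies within $O(1/T)$ of a Gaussian integer $p(j)\in\Z+\sqrt{-1}\,\Z$ for all $j\in J$. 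Then I apply Dirichlet's principle exactly as in the real proof to obtain $q_0\in[1,n^\smooth]\cap\Z$ with $|e_n(q_0 t_r)-1|\le 2\pi m^{1/2}n^{-\smooth/m}$ for all $r$, and derive the analogue of \Cref{lem:Dbd}: for $k=O_{K_*,\smooth,m}(1)$ sufficiently large and any $\ell\ge 1$ with $[j,j+k\ell q_0]\subset J$,
\[
(\Delta^k_{\ell q_0}\Psi)(j) \ll \sum_{i=0}^k \|\psi_u(j+i\ell q_0)\|_{\R/\Z} + \|\psi_v(j+i\ell q_0)\|_{\R/\Z},
\]
because the corresponding Gaussian-integer sequence $\Delta^k_{q_0}p$ vanishes on the progression by the integer/polynomial argument used in the real case.

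The next step is the analogue of \Cref{lem:shifts}. Here is where the weakly spread hypothesis suffices: because $\Psi(j)$ contains only $e_n(jt_r)$ terms, the twisted second-order operators $D_{t_0}$ of \eqref{def:Dt0} acting on $f_t(j)=(1-e_n(\ell q_0 t))^k e_n(jt)$ produce a clean factor $(1-e_n(L(t-t_0)))^2$, so a single application of $D_{t_r}$ per frequency (for $r=2,\dots,m$), rather than the doubled $D_{t_r}\circ D_{-t_r}$ needed in the real case, eliminates all but one term. This yields a bound of the form
\[
\frac{L'}{n}\,\Bigl|\bigl(y_1'\,\text{-term or }y_1\text{-term}\bigr)\cdot (1-e_n(\ell q_0 t_1))^k \prod_{r=2}^m (1-e_n(L(t_1-t_r)))^2\,(1-e_n(L't_1))^{\epsilon}\Bigr|
\]
(where $\epsilon\in\{1,2\}$ depending on whether the dominant coordinate is $y_1'$ or $y_1$, exactly as in the real case split after \eqref{yr.bd}) controlled by $O(T^{1/2}n^{-1/2})$ using \eqref{psi.CS}. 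Crucially, the product now only contains \emph{difference} factors $(t_1-t_r)$, not the sum factors $(t_1+t_r)$, so the weakly $\lam$-spread condition together with the $n^\smooth$-smoothness of $t_1$ gives a lower bound $\gg \lam^{O(m)}T^{-O(m)}|1-e_n(\ell q_0 t_1)|^k$ on the left-hand side.

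Setting these together and solving for $|1-e_n(\ell q_0 t_1)|$ leads to $|1-e_n(\ell q_0 t_1)|\le n^{-1/(3k)}$ for all $\ell\ge 1$ with $\ell k q_0\le|J|/2$, hence by \Cref{claim:dividing} to $\|q_0 t_1/2\pi n\|_{\R/\Z}\le n^{-1}\log^{O(1)}n$, contradicting the $n^\smooth$-smoothness of $t_1$. The main obstacle is verifying that the differencing identities behave correctly with the complex-valued $\Psi$ and that the $\Z+\sqrt{-1}\Z$-valued approximation $p(j)$ genuinely allows us to replace $(\Delta^k_{\ell q_0}\Psi)(j)$ by small error at the cost of $\sum_i \|\psi_u(j+i\ell q_0)\|_{\R/\Z}+\|\psi_v(j+i\ell q_0)\|_{\R/\Z}$; after that, the structural gain that the product reduces from $\prod(t_1\pm t_r)$ to $\prod(t_1-t_r)$ is exactly why the weakly spread hypothesis is enough.
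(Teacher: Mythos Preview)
Your proposal is correct and follows essentially the same route as the paper's sketch: the paper also observes that in the complex case one can ``study the sum $\sum_r y_r f_{t_r,\ell}(j) + y_r'\partial_{t_r}f_{t_r,\ell}(j)$ directly'' rather than its real part, which is precisely your $\Psi(j)$, and that this reduces the differencing cascade to $D'_{t_1}\circ D_{t_2}\circ\cdots\circ D_{t_m}$ (no $D_{-t_r}$), yielding only the difference factors $\prod_{r\ge 2}(1-e_n(L(t_1-t_r)))^2$ and hence requiring only the weakly spread condition. One minor point: in the complex case the final step (shift-and-subtract to isolate $y_1'$) does not actually produce a factor $(1-e_n(L't_1))^\epsilon$, since there are no $e_n(-jt_1)$ terms to eliminate with $D'_{-t_1}$; the smoothness of $t_1$ enters only at the very end to derive the contradiction, exactly as you describe in your last paragraph.
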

We next sketch the main idea to prove this result. Fix a vector $n^{-1/8} \le \|(\bs y, \bs y')\|_2 \le n^{K_*}$, recalling \eqref{def:psi}, we further denote
\begin{equation}	\label{def:psi'}
\psi'(j): = \psi'(j; \bs t)= \langle \bs b_j, \bs y\rangle - (j/n) \langle \bs a_j, \bs y'\rangle 
=\sum_{r=1}^\mom y_r \sin(j t_r/n) + y'_r (j/n) \cos(jt_r/n)\,.
\end{equation}
The main proposition is the following analog of  Proposition \ref{prop:psi.LB}.

\begin{prop}
\label{prop:psi.LB'}
Let $\bs t=(t_1,\dots, t_r) \in \R^\mom$ be $n^{\smooth}$-smooth and assume that $\bs t$ is weakly $\spread$-spread for some $\smooth\in (0,1)$ and $\omega(n^{-1/8\mom})\le \spread<1$. 
Then
\[
\sum_{j=0}^n \|\psi(j)\|_{\R/\Z}^2 + \sum_{j=0}^n \|\psi'(j)\|_{\R/\Z}^2>  \log^4n .
\]
\end{prop}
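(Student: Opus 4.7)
The plan is to adapt the proof of \Cref{prop:psi.LB} by combining $\psi$ and $\psi'$ into a single complex-valued sequence
\[
\Psi(j) := \psi(j) + \sqrt{-1}\,\psi'(j) = \sum_{r=1}^m \bigl( y_r + \sqrt{-1}(j/n)y_r' \bigr) e_n(jt_r),
\]
the last identity following from a direct trigonometric computation. The key structural observation is that only the ``positive'' frequencies $e_n(jt_r)$ appear in $\Psi$, with no $e_n(-jt_r)$ terms. This is precisely why weak spreadness suffices: the differencing machinery underlying \Cref{lem:shifts} produces factors $\delta_L(t_r - t_{r'})$, but never $\delta_L(t_r + t_{r'})$.

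Suppose for contradiction that $\sum_j\|\psi(j)\|_{\R/\Z}^2 + \sum_j\|\psi'(j)\|_{\R/\Z}^2 \le T := \log^4 n$. Markov's inequality produces an interval $J\subset[n]$ of length $|J|\ge n/T^6$ on which $\|\psi(j)\|_{\R/\Z}, \|\psi'(j)\|_{\R/\Z} \le 1/T$. The Dirichlet argument supplying $q_0\in[1,n^\kappa]$ with $|e_n(q_0 t_r)-1| = O(n^{-\kappa/m})$ for every $r$ carries over verbatim. Since differencing is linear,
\[
(\Delta^k_{\ell q_0}\Psi)(j) = \sum_r y_r f_{t_r}(j) + y_r' \,\partial_t f_t(j)\bigr|_{t=t_r}, \qquad f_t(j) := (1-e_n(\ell q_0 t))^k e_n(jt),
\]
and running the polynomial-progression argument of \Cref{lem:Dbd} separately on $\psi$ and $\psi'$ (via their nearest-integer approximations on $J$) yields $|(\Delta^k_{\ell q_0}\Psi)(j)| \ll \sum_{i=0}^k (\|\psi(j+i\ell q_0)\|_{\R/\Z} + \|\psi'(j+i\ell q_0)\|_{\R/\Z})$ for $k = O_{K_*,\kappa,m}(1)$ and $[j,j+k\ell q_0]\subset J$.

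The analog of \Cref{lem:shifts} is proved by applying only the operators $D_{t_0}$ of \eqref{def:Dt0} at $t_0 = t_m, t_{m-1}, \ldots, t_2$; we never invoke $D_{-t_r}$, which is where the original argument required $\lambda$-spread rather than weakly $\lambda$-spread. Iterating \eqref{DLt0}--\eqref{DL0} gives
\[
(D_{t_2}\circ\cdots\circ D_{t_m}\circ \Delta^k_{\ell q_0}\Psi)(j) = f_{t_1}(j)\bigl[y_1 G(t_1) + y_1' G'(t_1)\bigr] + y_1' g_{t_1}(j) G(t_1),
\]
where $G(t) := \prod_{r=2}^m \delta_L(t-t_r)^2$ (note: no $\delta_L(t+t_r)^2$ factor, in contrast to the original). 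A single shift by $L'$ in $j$ then extracts a term proportional to $y_1'(L'/n) f_{t_1}(j) G(t_1)$; the case $|y_1'| = O(n^{-1/16})$ is handled by a simpler variant along the lines of \Cref{rmk:y1}, retaining only the $y_r$ coefficients. Weak spreadness together with the differences-only portion of the proof of \Cref{lem:dilate} supplies $L\asymp n/T^7$ with $|\delta_L(t_1-t_r)|\gg_m \lambda/T^7$ for $r\ne 1$, while $L' \asymp n/T^8$ may be chosen freely, with no bound on $|\delta_{L'}(2t_1)|$ required. Combining with the Cauchy--Schwarz estimate $\sum_j \|\psi(j)\|_{\R/\Z} + \|\psi'(j)\|_{\R/\Z} \ll \sqrt{nT}$ forces $|1-e_n(\ell q_0 t_1)| \le n^{-1/(3k)}$ for every $\ell$ with $\ell k q_0 \le |J|/2$; \Cref{claim:dividing} then yields $\|q_0 t_1/(2\pi n)\|_{\R/\Z} = n^{-1}\log^{O(1)}n$, contradicting the $n^\kappa$-smoothness of $t_1$ since $q_0\le n^\kappa$. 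The main obstacle is bookkeeping during the elimination-and-shift computation: we must verify that the error from the nearest-integer approximation is controlled by both $\|\psi(\cdot)\|_{\R/\Z}$ and $\|\psi'(\cdot)\|_{\R/\Z}$ (not just by $|\Psi(\cdot)|$), but this is automatic from the linearity of each differencing step applied separately to the real and imaginary parts.
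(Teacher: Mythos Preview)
Your proposal is correct and takes essentially the same approach as the paper's sketch: the complex combination $\Psi = \psi + \sqrt{-1}\,\psi'$ is exactly the sum $\sum_r y_r f_{t_r} + y_r'\,\partial_t f_{t_r}$ that the paper says to ``study directly'' in place of its real part, and your omission of the operators $D_{-t_r}$ is precisely what the paper means by ``without negative perturbations,'' producing only the factors $\delta_L(t_1-t_r)^2$ and thereby justifying weak spreadness. Your observation that no $D'$-type step and no lower bound on $|\delta_{L'}(2t_1)|$ are needed (a single shift-and-subtract suffices after $D_{t_2}\circ\cdots\circ D_{t_m}$) is a minor sharpening of the paper's informal sketch, which lists $D'_{t_1}$ among the operators---likely as shorthand for the terminal shift.
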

We next sketch the proof, omitting most details. We follow the proof of Proposition \ref{prop:psi.LB} with some simplifications, that instead of focusing on $(\Delta^k_{\ell q_0}\psi)(j)$ as the real part of $\sum_{r=1}^\mom y_r f_{j,\ell}(t_r)+ y_r' \partial_{t_r}f_{t_r,\ell}(j)$ in \eqref{Dpsi} we can study the sum directly. This would allow use to shorten the differential process significantly, namely in the proof of \Cref{lem:shifts} we will only need to consider $D_{t_1}' \circ D_{t_2} \circ  \cdots\circ D_{t_\mom}$ (without negative perturbations), leading to a simpler multiplicative factor $\prod_{r=2}^\mom \big( 1- e_n(L(t_1-t_r)) \big)^2$ (without $( 1- e_n(L(t_1+t_r)))^2$), hence justifying the weakly spreadness condition. 

Finally, one can similarly prove Lemma \ref{lem:cov},  Theorem \ref{thm:box}, and Theorem \ref{thm:smallball} for the random walk $T_n(\bs t)$ above under the weakly spreadness condition on $\bs t$. Using these results, we can now conclude the proof of \Cref{prop:moments} for the complex-valued case as in Section \ref{sec:proof-moments} where we can now allow the $x_{\al_i}$ to vary entirely over $[-\pi,\pi]$.

\subsection{Other extensions}

As noted in \Cref{rmk:extensions}, with minor modifications our arguments extend \Cref{thm:main} to $P_n$ of the general form $P_n(x) = |J_n|^{-1/2} \sum_{j\in J_n} \xi_j e(jx)$ for any sequence of finite intervals $J_n\subset \Z$ with $|J_n|\to \infty$. 
By multiplying by the phase $e(-n_0x)$, which does not change the minimum modulus, where $J=[n_0, n_1]$, one sees it suffices to consider the form 
\begin{equation}	\label{Pn.gen}
P_n(x) = \frac1{\sqrt{n+1}} \sum_{j=0}^n \xi_j e(jx).
\end{equation}
Our arguments also extend to another well-studied class of trigonometric polynomials, of the form
\begin{equation}	\label{model:Trig}
P_n(x) = \frac1{\sqrt{n+a}}\bigg[ \sqrt{a} \xi_0+  \sum_{j=1}^n \xi_j \cos(jx) + \eta_j \sin (jx) \bigg]\,,
\end{equation}
where the variables $\xi_j,\eta_j$ are iid copies of a random variable $\xi$, and $a>0$ is a fixed parameter.
We note that for this model it is natural to focus only on the complex $\xi$ case as otherwise $P_n$ is likely to have roots. 

\begin{theorem}	\label{thm:gen}
\Cref{thm:main} extends to hold for $P_n$ of the forms \eqref{Pn.gen} and \eqref{model:Trig}.
\end{theorem}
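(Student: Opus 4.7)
The plan is to trace the proof of \Cref{thm:main} and identify where each of the two new models requires modifications. The local-linearization machinery of \Cref{sec:linearize}, the bad-arc bound \Cref{lem:badarcs}, and the moment-matching and main-theorem deductions of Sections~\ref{sec:proof-moments}--\ref{sec:proof-main} are insensitive to the precise form of the random polynomial, and I expect them to apply verbatim to both models after routine rescaling. Similarly the Edgeworth framework that produces \Cref{thm:box,thm:smallball} is generic given a suitable characteristic-function decay. What will require actual work in each case is the covariance lower bound for the associated random walk and the characteristic-function decay.

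For model \eqref{Pn.gen}, the associated walk becomes $S^+_n(\bs t) := \sum_{j=0}^n \xi_j \bs w_j(\bs t) \in \R^{4m}$. The covariance bound of \Cref{lem:cov} is already stated for an arbitrary interval $J\subset[n]$ with $|J|\gg n$, so taking $J=[0,n]$ will give the required analog with no change. For \Cref{thm:char}, the original proof used the identities $\bs w_j\pm \bs w_{-j}$ to collapse the $4m$-dimensional problem to showing that $\sum_{j=0}^n \|\psi(j)\|_{\R/\Z}^2\gg \log^4n$ for the scalar $\psi(j)$ of \eqref{def:psi}. Lacking the $j\leftrightarrow -j$ symmetry, I would instead work directly with
\[
\Psi(j) := \langle \bs w_j,\bs x\rangle = \sum_{r=1}^m \bigl[x^1_r-(j/n)x^4_r\bigr]\sin(jt_r/n) + \bigl[x^3_r+(j/n)x^2_r\bigr]\cos(jt_r/n),
\]
for $\bs x=(\bs x^1,\dots,\bs x^4)\in\R^{4m}$ of norm in $[n^{-1/8},n^{K_*}]$. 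Since $\Psi$ is still an $\R$-linear combination of $e_n(\pm j t_r)$ and $(j/n)e_n(\pm j t_r)$, the key identities \eqref{DFt}--\eqref{D2.fab} and the operator $\Delta^k_{\ell q_0}$ acting on $f_t(j)=(1-e_n(\ell q_0 t))^ke_n(jt)$ and $g_t(j)=\partial_tf_t(j)$ will transfer in unchanged form. Running the proof of \Cref{prop:psi.LB} with $\Psi$ in place of $\psi$, one passes to an interval $J$ of length $\gg n/T^6$ on which $\|\Psi(j)\|_{\R/\Z}\le 1/T$, then applies a chain of the form $D'_{\pm t_1}\circ D_{t_2}\circ D_{-t_2}\circ\cdots\circ D_{t_m}\circ D_{-t_m}\circ \Delta^k_{\ell q_0}$ to isolate a single surviving term; a lower bound on its magnitude combined with the smoothness of the dominant $t_r$ yields the required contradiction as in \Cref{lem:shifts}. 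One splits into cases by which of the $4m$ affine-in-$j$ coefficients of $\sin(jt_r/n),\cos(jt_r/n)$ is largest, invoking either the ``large-$y_1'$'' branch of the main argument or the ``small-$y_1'$'' branch of \Cref{rmk:y1} accordingly.

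For model \eqref{model:Trig} with complex $\xi$, the independence of the coefficients of $\cos(jx)$ and $\sin(jx)$ leads to a walk
\[
T^{\mathrm{trig}}_n(\bs t) := \sum_{j=1}^n \tilde\xi_j \bs v_j(\bs t) + \tilde\eta_j \bs u_j(\bs t) \in \R^{2m}
\]
(plus an asymptotically negligible $\xi_0$ contribution), with $\bs u_j,\bs v_j$ as in \eqref{def:uvj} and $\tilde\xi_j,\tilde\eta_j$ the real and imaginary parts of $\xi_j,\eta_j$. The factorization of the characteristic function yields
\[
|\E e(\langle T^{\mathrm{trig}}_n(\bs t),\bs x\rangle)| \le \exp\Bigl(-c\sum_{j=1}^n\bigl(\|\langle \bs v_j,\bs x/2\pi\rangle\|_\xi^2 + \|\langle \bs u_j,\bs x/2\pi\rangle\|_\xi^2\bigr)\Bigr),
\]
which dominates the bound treated by \Cref{thm:char'}. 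Hence that estimate applies verbatim under the weakly $\lambda$-spread hypothesis of \Cref{def:spread:w}, and the covariance lower bound, Edgeworth comparison, small ball estimate, and moment matching will then proceed exactly as in \Cref{sec:extensions}.

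The hardest step will be the adaptation of \Cref{thm:char} to the one-sided walk of model \eqref{Pn.gen}: without the $j\leftrightarrow -j$ symmetry one must carry all $4m$ real parameters through the differencing process, generating additional case analysis. No new algebraic ideas should be required, however, since the key identities and the splitting strategy are already present in \Cref{lem:shifts} and \Cref{rmk:y1}.
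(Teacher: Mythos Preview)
Your proposal is essentially correct, and for model \eqref{Pn.gen} it is actually more explicit than the paper, which only records that \Cref{lem:cov} and \Cref{thm:char} require modification without spelling out the one-sided argument. Your observation that $\Psi(j)=\langle\bs w_j,\bs x\rangle$ can still be written as the real part of $\sum_r z_r f_{t_r}(j)+z_r'\,\partial_t f_{t_r}(j)$, now with \emph{complex} coefficients $z_r=x^3_r-\ii x^1_r$ and $z_r'=x^4_r-\ii x^2_r$, is exactly what makes the differencing machinery of \Cref{lem:Dbd} and \Cref{lem:shifts} go through without new ideas; the case split of \Cref{rmk:y1} then applies with $|z_1'|$ in place of $|y_1'|$.

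For model \eqref{model:Trig} you and the paper diverge. The paper takes a shorter route: since $\Re P_n$ and $\Im P_n$ are independent (each depending on disjoint halves of the real coefficients), one may condition on $\xi_0$ and all $\eta_j$, leaving the $2m$-dimensional walk $\sum_{j=1}^n\xi_j\bs v_j$. The point is that this is \emph{precisely} the walk to which the proof of \Cref{thm:char} had already reduced via the $j\leftrightarrow -j$ symmetrization (see \eqref{def:psi}), so \Cref{prop:psi.LB} applies verbatim and no new characteristic-function work is needed at all. Your route through \Cref{thm:char'} is also valid---the walk $\sum_j\xi_j^{(1)}\bs v_j+\eta_j^{(1)}\bs u_j$ has exactly the structure handled there---but it is an unnecessary detour given the simpler conditioning reduction. (A minor imprecision: your description ``$\tilde\xi_j,\tilde\eta_j$ the real and imaginary parts of $\xi_j,\eta_j$'' is ambiguous; you mean the real parts of $\xi_j$ and of $\eta_j$, giving the walk for $\Re P_n$, with the imaginary parts handled independently and identically.)
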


For the model \eqref{model:Trig}, by combining with \Cref{thm:YZ} we obtain the following:

\begin{corollary}
\label{cor:trig}
The limit \eqref{YZ:lim} holds also for the model \eqref{model:Trig} with $\xi$ a complex variable as in \Cref{thm:main}, and $a=1/2$.
\end{corollary}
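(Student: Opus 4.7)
The plan is to combine \Cref{thm:gen} with an exact distributional identification between the complex Gaussian instance of the trigonometric model \eqref{model:Trig} at $a=1/2$ and the YZ model \eqref{model:YZ}.

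First, \Cref{thm:gen} extends the universality result \Cref{thm:main} to the trigonometric model \eqref{model:Trig}: for any sub-Gaussian complex $\xi$ of the form $(\xi'+\ii\xi'')/\sqrt 2$ with $\xi',\xi''$ iid real of mean zero and variance one,
\[
\P\Big(m_n > \frac{\tau}{n}\Big) - \P_\Bg\Big(m_n > \frac{\tau}{n}\Big) \longrightarrow 0,
\]
where under $\P_\Bg$ the variables $\xi',\xi''$ are standard real Gaussians, so that $\xi$ is standard complex Gaussian. It therefore suffices to verify \eqref{YZ:lim} for the model \eqref{model:Trig} with $a=1/2$ when the coefficients are standard complex Gaussian.

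For this I would give a pathwise rewriting of \eqref{model:YZ} that matches \eqref{model:Trig} at $a=1/2$ when both are driven by complex Gaussian coefficients. Expanding $e(jx)=\cos(jx)+\ii\sin(jx)$ and grouping the indices $\pm j$,
\[
\frac{1}{\sqrt{2n+1}}\sum_{j=-n}^n \xi_j e(jx)
=\frac{1}{\sqrt{2n+1}}\bigg[\xi_0 + \sum_{j=1}^n (\xi_j+\xi_{-j})\cos(jx) + \ii(\xi_j-\xi_{-j})\sin(jx)\bigg].
\]
Define, for $1\le j\le n$, the variables
\[
\tilde\xi_j := (\xi_j+\xi_{-j})/\sqrt 2, \qquad \tilde\eta_j := \ii(\xi_j-\xi_{-j})/\sqrt 2 .
\]
For iid standard complex Gaussian $\{\xi_j\}_{j=-n}^{n}$, a short first- and second-moment computation using circularity of the standard complex Gaussian (in particular $\E\xi^2=0$, so that multiplication by $\ii$ preserves the law) shows that $\xi_0,\tilde\xi_1,\tilde\eta_1,\dots,\tilde\xi_n,\tilde\eta_n$ are jointly iid standard complex Gaussian. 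Since $\sqrt 2/\sqrt{2n+1} = 1/\sqrt{n+1/2}$, the displayed polynomial equals
\[
\frac{1}{\sqrt{n+1/2}}\bigg[\sqrt{1/2}\,\xi_0 + \sum_{j=1}^n \tilde\xi_j\cos(jx) + \tilde\eta_j\sin(jx)\bigg],
\]
which is precisely the polynomial \eqref{model:Trig} with $a=1/2$ driven by standard complex Gaussian coefficients. Hence the two random fields on $[-\pi,\pi]$ have the same law, so their minimum moduli are equidistributed, and the limit \eqref{YZ:lim} for \eqref{model:Trig} with $a=1/2$ follows immediately from \Cref{thm:YZ}.

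There is no genuine obstacle beyond those already overcome by \Cref{thm:gen} (the universality extension to this model). The role of the specific value $a=1/2$ is to match both the overall normalization $1/\sqrt{n+1/2}=\sqrt 2/\sqrt{2n+1}$ and the weight $\sqrt{1/2}$ of the constant mode $\xi_0$; for a different choice of $a$ this exact identification would fail at scale $1/n$, so that even the intensity $\lambda$ in \eqref{YZ:lim} could change, and the complex Gaussian base case would have to be revisited.
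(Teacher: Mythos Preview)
Your proof is correct and follows essentially the same strategy as the paper: reduce via \Cref{thm:gen} to the complex Gaussian case, then identify the Gaussian instance of \eqref{model:Trig} at $a=1/2$ with the model \eqref{model:YZ} by a linear change of coefficients. The only cosmetic difference is the direction of the identification---the paper defines new variables $\zeta_j=(\xi_j+\eta_j)/\sqrt2$, $\zeta_{-j}=(\xi_j-\eta_j)/\sqrt2$ to rewrite \eqref{model:Trig} in the form \eqref{model:YZ}, whereas you go the other way; your inclusion of the factor $\ii$ in $\tilde\eta_j$ makes the pathwise identity completely explicit.
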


\begin{proof}
From \Cref{thm:gen} it suffices to verify that \eqref{YZ:lim} holds under $\P_{\CN_{\R}(0,1)}$. Note that under this measure, $\xi_j, j\ge0$ and $\eta_j, j\ge 1$ are iid standard complex Gaussians. 
Set $\zeta_0 = \xi_0$ and for $1\le j\le n$ set $\zeta_j:=\frac{1}{\sqrt{2}}(\xi_j+\eta_j)$, $\zeta_{-j}:= \frac1{\sqrt{2}}(\xi_j-\eta_j)$. 
From the rotational invariance of the complex Gaussian law it follows that $\zeta_j, -n\le j\le n$ are iid standard complex Gaussians. Then one verifies that with the change of variables, \eqref{model:Trig} becomes
\[
P_n(x) = \frac1{\sqrt{2n+2a}} \sum_{j=-n}^n \zeta_j e(jx).
\]
The claim now follows from the complex Gaussian case of \Cref{thm:YZ} and the choice $a=1/2$.
\end{proof}


We comment on the minor modifications of the proof of \Cref{thm:main} that are needed to obtain \Cref{thm:gen}.
The probabilistic Lemmas \ref{lem:derivatives} and \ref{lem:badarcs} follow from straightforward modifications.
\Cref{lem:neararcs} is deterministic and does not depend on the specific form of $P_n$ after conditioning on the good event.
The remainder of the argument only depends on the specific model through the 
the matrix $W$ in the definition \eqref{def:Snt} of the random walks $S_n(\bs t)$,
and the only proofs that need modification are those of \Cref{lem:cov} and \Cref{thm:char}.
For the model \eqref{model:Trig}, we may condition on $\xi_0$ and $\eta_j,j\ge 1$. 
As the trigonometric series is now real, we only need to consider a $2m$-dimensional walk of the form
\[
\sum_{j=1}^n \xi_j \bs v_j
\]
with notation as in \eqref{def:uvj}. The $n\times m$ matrix $V$ with rows $\bs v_j$ is a submatrix of $W_{[-n,n]}$ as defined in \eqref{def:wj} one checks that the argument for \Cref{lem:cov} yields the same bound on the smallest singular value of $V$. Moreover, the proof of \Cref{thm:char} began by reduction of the problem to the submatrix $V$ (see \eqref{def:psi}), so the result also holds in this case.

\appendix

\section{Separation of near-minimizers}
\label{app:neararcs}

In this appendix we prove \Cref{lem:neararcs}, restated below, along similar lines to the proof of \cite[Lemma 2.11]{YaZe}.

\begin{lemma}
On the event $\CG_{2}(\Cp/2)$ we have
\begin{enumerate}[(i)]
\item If $\CA_{\al}$ and $\CA_{\al +1}$ hold, then 
\[
Y_\al \in [\frac{\pi}{N} - \frac{\pi} {N \log^{\Cp/4} n}, \frac{\pi}{N}].
\]
\item Furthermore, $\CA_\al$ and $\CA_{\al'}$ cannot hold simultaneously as long as 
\[
2\le |\al'-\al| \le \frac{n}{\log^{3\Cp}n}.
\]
\end{enumerate}
\end{lemma}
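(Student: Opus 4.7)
The strategy rests on the Taylor expansion of $P_n$ around each mesh point. Under $\CG_{2}(\Cp/2)$ we have $\sup_x|P_n''(x)| \le n^2 \log^{\Cp/2}n$, and hence
$$|P_n(x) - F_\al(x)| \le \tfrac12(x-x_\al)^2 n^2 \log^{\Cp/2}n\qquad \text{for all } x \text{ and all } \al.$$
Combined with $|P_n'(x_\al)|\ge n/\log^{\Cp/2}n$ (from $\CA_\al''$) and the perpendicularity $F_\al(x^*_\al) \perp P_n'(x_\al)$ (which gives the identity $|F_\al(x)|^2 = (|Z_\al|/n)^2 + (x - x^*_\al)^2 |P_n'(x_\al)|^2$, where I write $x^*_\al := x_\al + Y_\al$), this yields strong separation estimates for nearby $x^*_\al, x^*_{\al'}$ whenever both $\CA_\al$ and $\CA_{\al'}$ hold.

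For part (i), assume $\CA_\al \wedge \CA_{\al+1}$. Since $|x^*_{\al+1} - x_{\al+1}| = |Y_{\al+1}| \le \pi/N$, the Taylor bound around $x_{\al+1}$ together with $\CA_{\al+1}'$ yields $|P_n(x^*_{\al+1})| \le |Z_{\al+1}|/n + O(\log^{5\Cp/2}n/n^2) \ll \log n/n$. Applying the Taylor bound around $x_\al$ at $x = x^*_{\al+1}$ (for which $|x^*_{\al+1} - x_\al| \le 3\pi/N$) similarly gives $|F_\al(x^*_{\al+1})| \ll \log n/n$. Plugging into the perpendicularity identity and using $|P_n'(x_\al)| \ge n/\log^{\Cp/2}n$ yields $|x^*_{\al+1} - x^*_\al| \ll \log^{\Cp/2+1}n/n^2$. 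Since $x^*_{\al+1} - x^*_\al = 2\pi/N + Y_{\al+1} - Y_\al$ and $|Y_{\al+1}| \le \pi/N$, this forces $Y_\al \ge \pi/N - O(\log^{\Cp/2+1}n/n^2)$; using $N \asymp n^2/\log^{\Cp}n$, the error is at most $\pi/(N\log^{\Cp/4}n)$ provided $\Cp \ge 4$, completing part (i).

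For part (ii), suppose WLOG $\al' > \al$ with $2 \le \al'-\al \le n/\log^{3\Cp}n$, and set $D := x^*_{\al'} - x^*_\al$. The assumptions give the two-sided bound $2\pi/N \le D \le 3\pi/(n\log^{2\Cp}n)$. Repeating the same approximation scheme, $|P_n(x^*_{\al'})| \ll \log n/n$ while $|P_n(x^*_{\al'}) - F_\al(x^*_{\al'})| \ll D^2 n^2 \log^{\Cp/2}n$ (using $|x^*_{\al'} - x_\al| \le 2D$, which follows from $D \ge 2\pi/N$). Combining with the identity $|F_\al(x^*_{\al'})|^2 \ge D^2 n^2/\log^{\Cp}n$ produces the key estimate
$$D^2 n^2 / \log^{\Cp} n \;\ll\; \log^2 n / n^2 + D^4 n^4 \log^{\Cp} n.$$
If the first right-hand term dominates, then $D \ll \log^{\Cp/2+1}n/n^2$, contradicting $D \ge 2\pi/N \asymp \log^{\Cp}n/n^2$ since $\log^{\Cp/2-1}n \to \infty$ for $\Cp > 2$. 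If the second dominates, then $D \gg 1/(n\log^{\Cp}n)$, contradicting $D \le 3\pi/(n\log^{2\Cp}n)$ for $n$ large enough that $\log^{\Cp}n$ exceeds an absolute constant. Either way one obtains a contradiction, proving part (ii). The main obstacle is the bookkeeping of the many logarithmic factors; the exponent $\Cp > 4$ is chosen precisely to leave slack in the above inequalities in both parts.
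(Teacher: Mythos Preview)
Your argument is correct and follows essentially the same route as the paper: bound the Taylor remainder via $\CG_2(\Cp/2)$ and use the linear growth of $|F_\al|$ away from its minimizer $x^*_\al$ to contradict $\CA_{\al'}$. Your Pythagorean identity $|F_\al(x)|^2 = (|Z_\al|/n)^2 + (x-x^*_\al)^2|P_n'(x_\al)|^2$ and the resulting dichotomy in part~(ii) are a clean repackaging of the monotonicity argument the paper writes out directly; the ingredients and the log-exponent bookkeeping match.
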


\begin{proof}
We first show (i). Assume that $\CA_\al$ holds and  $Y_\al \in [0, \frac{\pi}{N} - \frac{\pi} {N \log^{\Cp/4} n})$. Then 
\begin{align*}
|F_\al(x_\al + \pi/N)| = |Z_\al/n + (\pi/N - Y_\al) P'(x_\a)| 
& \ge | (\pi/N - Y_\al) P'( x_\a)| - |Z_\al|/n\\
 & \gg \frac{1}{N  \log^{\Cp/4} n}  \times  \frac{n}{\log^{\Cp/2} n} -\frac{\log n}{n} \\
&\gg  \frac{\log^{\Cp/4} n}{n}- \frac{\log n}{n} \gg    \frac{\log^{\Cp/4} n}{n}.
\end{align*}
Now for $x \in I_{\al+1}$ and under $\CG_{2}(\Cp/2)$
\begin{align*}
|F_{\al+1}(x) - F_\al(x)| &\le |F_{\al+1}(x) - P(x)| +|F_\al(x) - P(x)| \\
&\ll N^{-2} \sup_{x\in [-\pi, \pi]}  |P''(x)| \\
&\ll \frac{\log^{3\Cp}n}{n^2}.
\end{align*}
So if $x\in I_{\al+1}$ then 
\begin{align*}
|F_{\al+1}(x)| 
&\ge |F_\al(x)| - |F_{\al+1}(x) - F_\al(x)| \\
&\ge |F_\al(x_\al+\pi/N)| -  |F_{\al+1}(x) - F_\al(x)|\\
&\gg \frac{\log^{\Cp/4} n}{n}, 
\end{align*}
where  $|F_\al(x)|\ge |F_\al(x_\al+\pi/N)|$ because $x_\al + \pi/N$ is closer than $x$ to the minimizer $x_\al+ Y_\al$.
The above implies that $|Z_{\al+1}| = n |F_{\al + 1}(Y_{\al+1} +x_{\al+1})| > \log n$ and hence that $\CA_{\al+1}$ does not hold.

We turn to prove (ii). For $x\in I_{\al'}$ we have 
\begin{align*}
|F_\al(x) - F_{\al'}(x)| &\le  |F_{\al}(x)-P(x)| +  |F_{\al'}(x) - P(x)| \\
&\ll (x_\al - x_{\al'})^2 \sup_{x\in [-\pi, \pi]} |P''(nx)| \\
&\le (x_\al - x_{\al'})^2 n^{2} \log^{\Cp/2}n .
\end{align*}
On the other hand, on $\CA_\al$, for all $x\in I_{\al'}$
\begin{align*}
|F_\al(x)| \ge |F_\al(x_{\al'}-\pi/N)| & \ge |F_\al(x_{\al'}-\pi/N) - F_\al(Y_a)| - |F_\al(Y_a)|\\
  & \ge |(x_{\al'}-\pi/N-Y_\al) P'(x_\al)| - |Z_\al|/n\\  
  & \gg n |x_{\al'-1}-x_\al| \log^{-\Cp/2} n - \frac{\log n}{n}\\
   & \gg n |x_{\al'-1}-x_\al| \log^{-\Cp/2} n.
   \end{align*}
Thus for all $x\in I_{\al'}$,
\begin{align*}
|F_{\al'}(x)| &\ge |F_\al(x)| - (x_\al - x_{\al'})^2 n^{2} \log^{\Cp/2}n \\ 
&\gg  |x_{\al'-1}-x_\al| n \log^{-\Cp/2} n - (x_\al - x_{\al'})^2 n^{2} \log^{\Cp/2}n \\
& \gg n |x_{\al'-1}-x_\al| ( \log^{-\Cp/2} n  -  4 |x_{\al'-1}-x_\al| n \log^{\Cp/2}n )\\
&\gg n |x_{\al'-1}-x_\al|  (  \log^{-\Cp/2} n  -  4n^{-1}\log^{3\Cp/2}n ) \\
&\gg  |x_{\al'-1}-x_\al| n  \log^{-\Cp/2} n 
 \gg \frac{ \log^{\Cp/2} n}{n},
\end{align*}
implying $|Z_{\al'}|>\log n$ and hence that $\CA_{\al'}$ does not hold.
\end{proof}



\section*{Acknowledgments} 
We thank 
Pavel Bleher, Yen Do, Oanh Nguyen, Oren Yakir and Ofer Zeitouni for helpful discussions and comments, and to Yakir and Zeitouni for showing us an early draft of their work \cite{YaZe} on the Gaussian case.
This project was initiated at the American Institute of Mathematics meeting ``Zeros of random polynomials'' in August 2019, where Bleher and Zeitouni were also participants. 
In particular, the idea used here and in \cite{YaZe} to study local linearizations emerged from those discussions.
We thank the workshop organizers and the Institute for providing a stimulating research environment. 

\bibliographystyle{amsplain}


\begin{dajauthors}

\begin{authorinfo}[nc]
  Nicholas A.\ Cook\\
  Department of Mathematics\\
 Duke University\\
 Durham, NC 27708, USA\\
  {nickcook\imageat{}math\imagedot{}duke \imagedot{}edu}
\end{authorinfo}

\begin{authorinfo}[hn]
  Hoi H. \ Nguyen\\
  Department of Mathematics\\
  The Ohio State University \\ 
  Columbus, OH 43210 USA\\
 {nguyen\imagedot{}1261\imageat{}osu\imagedot{}edu}
\end{authorinfo}

\end{dajauthors}

\end{document}